\newcommand\ifpdf
\newtheorem{theorem}{Theorem}[section]
\newtheorem{proposition}[theorem]{Proposition}
\newtheorem{lemma}[theorem]{Lemma}
\newtheorem{corollary}[theorem]{Corollary}
\newtheorem{remark}[theorem]{Remark}
\theoremstyle{definition}
\newtheorem{definition}[theorem]{Definition}
\newtheorem{example}[theorem]{Example}
\numberwithin{equation}{section}
\newcommand{\C}{{\mathbb C}}
\newcommand{\R}{{\mathbb R}}
\newcommand{\Z}{{\mathbb Z}}
\newcommand{\N}{{\mathbb N}}
\newcommand{\Q}{{\mathbb Q}}
\newcommand{\CL}{{\mathcal L}}
\newcommand{\CM}{{\mathcal M}}
\newcommand{\CP}{{\mathcal P}}
\newcommand{\alcove}{{\mathfrak{a}}}
\newcommand{\retroS}{{M}}
\newcommand{\polypp}{{\mathcal Q}}
\newcommand{\CR}{{\mathcal R}}
\newcommand{\CS}{{\mathcal S}}
\newcommand{\CV}{{\mathcal V}}
\newcommand{\la}{{\langle}}
\newcommand{\ra}{{\rangle}}
\newcommand{\e}{{\mathrm{e}}}
\newcommand{\codim}{\operatorname{codim}}
\newcommand{\lin}{\operatorname{lin}}
\newcommand{\vol}{\operatorname{vol}}
\def\binomial(#1,#2){\binom{#1}{#2}}
\def\mult(#1,#2){\binom{#1}{#2}}
\renewcommand{\a}{{\mathfrak{a}}}
\renewcommand{\b}{{\mathfrak b}}
\renewcommand{\c}{{\mathfrak{c}}}
\newcommand{\f}{{\mathfrak{f}}}
\renewcommand{\t}{{\mathfrak{t}}}
\newcommand{\s}{{\mathfrak{s}}}
\newcommand{\p}{{\mathfrak{p}}}
\newcommand{\q}{{\mathfrak{q}}}
\renewcommand{\u}{{\mathfrak{u}}}
\newcommand{\lattice}{\Lambda}
\let\OurMathBbAux=\mathbb
\DeclareRobustCommand\OurMathBb{\OurMathBbAux}
\let\mathbb=\OurMathBb
\let\bfseries=\undefined
\DeclareRobustCommand\bfseries
\let\OurMathBbAux=\mathbf}
\def\@thm#1#2#3{%
  \ifhmode\unskip\unskip\par\fi
  \normalfont
  \trivlist
  \let\thmheadnl\relax
  \let\thm@swap\@gobble
  \thm@notefont{\fontseries\mddefault\upshape\unboldmath}
  \thm@headpunct{.}
  \thm@headsep 5\p@ plus\p@ minus\p@\relax
  \thm@space@setup
  #1
  \@topsep \thm@preskip               
  \@topsepadd \thm@postskip           
  \def\@tempa{#2}\ifx\@empty\@tempa
    \def\@tempa{\@oparg{\@begintheorem{#3}{}}[]}%
  \else
    \refstepcounter{#2}%
    \def\@tempa{\@oparg{\@begintheorem{#3}{\csname the#2\endcsname}}[]}%
  \fi
  \@tempa
}
\newcommand{\tgreen}[1]{}
\newcommand{\tred}[1]{}
\title[Intermediate Sums on Polyhedra II]{Intermediate Sums on Polyhedra II:  Bidegree and Poisson formula}
\author{V. Baldoni}
\address{Velleda Baldoni: Dipartimento di Matematica, Universit\`a degli studi di  Roma ``Tor Vergata'',
Via della ricerca scientifica 1, I-00133, Italy}
\email{baldoni@mat.uniroma2.it}
\author{N. Berline}
\address{Nicole Berline:  \'Ecole Polytechnique, Centre de Math\'ematiques Laurent Schwartz, 91128 Palaiseau Cedex, France}
\email{berline@math.polytechnique.fr}
\author{J. A. De Loera}
\address{Jes\'us A. De Loera:  Department of
  Mathematics, University of California,
  Davis, One Shields Avenue, Davis, CA, 95616, USA}
\email{deloera@math.ucdavis.edu}
\author{M. K\"oppe}
\address{Matthias~K\"oppe:  Department of
  Mathematics, University of California,
  Davis, One Shields Avenue, Davis, CA, 95616, USA}
\email{mkoeppe@math.ucdavis.edu}
\author{M. Vergne}
\address{Mich\`ele Vergne: Institut de Math\'ematiques de
Jussieu -- Paris Rive Gauche, Batiment Sophie Germain, Case 7012, 
75205 Paris Cedex 13, France} \email{vergne@math.jussieu.fr}
\begin{document}
\maketitle{}

\begin{abstract}
  We continue our study of intermediate sums over polyhedra, interpolating
  between integrals and discrete sums, which were introduced by A.~Barvinok
  [\emph{Computing the {E}hrhart quasi-polynomial of a rational simplex},
  Math.~Comp.~\textbf{75} (2006), 1449--1466].  By well-known decompositions,
  it is sufficient to consider the case of affine cones $s+\c$, where $s$ is
  an arbitrary real vertex and $\c$ is a rational polyhedral cone.  For a
  given rational subspace~$L$, we integrate a given polynomial function~$h$
  over all lattice slices of the affine cone~$s+\c$ parallel to the subspace~$L$
  and sum up the integrals.  We study these intermediate sums by means 
  of the intermediate generating functions $S^L(s+\c)(\xi)$, 
  and expose the bidegree structure in parameters $s$ and $\xi$, which
  was implicitly used in the algorithms in our papers [\emph{Computation of
    the highest coefficients of weighted {E}hrhart quasi-polynomials of
    rational polyhedra}, Found.\ Comput.\ Math.\  \textbf{12} (2012), 435--469]
  and [\emph{Intermediate sums on polyhedra: Computation and real {E}hrhart
    theory}, Mathematika \textbf{59} (2013), 1--22].  The bidegree structure
  is key to a new proof for
  the Baldoni--Berline--Vergne approximation theorem for discrete generating
  functions [\emph{Local {E}uler--{M}aclaurin
  expansion of {B}arvinok valuations and {E}hrhart coefficients of rational
  polytopes}, Contemp.\ Math.\ \textbf{452} (2008), 15--33]
  , using the Fourier
  analysis with respect to the parameter~$s$ and a continuity argument.
  Our study also enables a forthcoming paper, in which we study intermediate sums
  over multi-parameter families of polytopes.
\end{abstract}

\tgreen{This is the version as in arXiv and submitted to journal,
  with new changes regarding degree terminology!  All changes are marked up in
  green. --Matthias 2014-08-30. Updated 2014-08-31 according to comments
  received from Michele and added more `degree' clarifications.}

\section{Introduction}

Let $\p$ be a rational polytope in $V=\R^d$.  Computing the volume of the
polytope~$\p$ and counting the integer points in~$\p$ are two fundamental
problems in computational mathematics, both of which have a multitude of
applications.  The same is true for weighted versions of these problems.  Let
$h(x)$ be a polynomial function on $V$.  Then we consider the problems to compute the
integral 
$$I(\p, h) = \int_\p h(x)\, \mathrm dx$$
and the sum of values of $h(x)$ over the set of integer points of $\p$,
$$
S(\p,h)=\sum_ {x\in \p\cap \Z^d}h(x).
$$

\subsection{Intermediate sums}

The integral $I(\p,h)$ and the sum $S(\p,h)$ have an interesting common
generalization, the so-called \emph{intermediate} sums $S^L(\p,h)$, where $L\subseteq V$
is a rational vector subspace.  These sums
interpolate between the discrete sum $S(\p,h)$ and the integral $\int_\p
h(x)\,\mathrm dx$ as follows. For a polytope $\p\subset V$ and a polynomial
$h(x)$, we define
$$
S^L(\p,h)= \sum_{y} \int_{\p\cap (y+L)} h(x)\,\mathrm dx,
$$
where the summation variable $y$ runs over a certain projected lattice, so that
the polytope $\p$ is sliced along affine 
subspaces parallel to $L$ through lattice points and the integrals
of $h$ over the slices are added up. For $L=V$, there is only one
term in the sum, and $S^V(\p,h)$ is just the integral~$I(\p,h)$. 
For $L=\{0\}$, we recover the discrete sum $S(\p,h)$. 

Intermediate sums were introduced as a key tool in a remarkable construction
by Barvinok~\cite{barvinok-2006-ehrhart-quasipolynomial}.  Consider the one-parameter family of dilations
$t\p$ of a given polytope~$\p$ by positive integers~$t$, which is studied in
\emph{Ehrhart theory}.  A now-classic result is that the number $S(t\p, 1)$ is a quasi-polynomial function of
the parameter~$t$.\footnote{A quasi-polynomial takes the form of a polynomial
  whose coefficients are periodic functions of~$t$, rather than constants. 
  In traditional Ehrhart theory, only integer dilation factors~$t$ are considered, and so a
  coefficient function with period~$q$ can be given as a list of $q$~values,
  one for each residue class modulo~$q$.  However, the approach to
  computing Ehrhart quasi-polynomials via generating functions of parametric
  polyhedra leads to a natural, shorter representation of the coefficient functions as closed-form formulas (so-called
  step-polynomials) of the dilation parameter~$t$, using the 
  ``fractional part'' function.  These closed-form formulas are naturally valid for
  arbitrary non-negative \emph{real} (not just integer) dilation
  parameters~$t$.
  This fact was implicit in the computational works following this
  method \cite{Verdoolaege2007parametric,Verdoolaege2005PhD}, and was made
  explicit in \cite{koeppe-verdoolaege:parametric}.   
  The resulting real Ehrhart theory has recently caught the interest of other
  authors \cite{linke:rational-ehrhart,HenkLinke}.}
Its crudest asymptotics (the highest-degree
coefficient) is given by the volume $I(\p,1)$ of the polytope.  Barvinok's
construction in~\cite{barvinok-2006-ehrhart-quasipolynomial} provides efficiently computable \emph{refined asymptotics} for $S(t\p,
1)$ in the form of the highest $k$ coefficients of the
quasi-polynomial, where $k$ is a fixed number.  This
is done by computing certain \emph{patched sums}, which are finite linear
combinations $\sum_{L\in \CL} \rho(L) S^L(\p,1)$ of intermediate
sums.  In~\cite{so-called-paper-1} and \cite{so-called-paper-2}, we gave a refinement and
generalization of Barvinok's construction, based on the
Baldoni--Berline--Vergne approximation theorem for discrete generating
functions \cite{Baldoni-Berline-Vergne-2008}, in which we handle the general weighted case
and compute the periodic coefficients as closed-form formulas (so-called
step-polynomials) of the dilation parameter~$t$.  These formulas are
naturally valid for arbitrary non-negative \emph{real} (not just integer) dilation
parameters~$t$.


\subsection{Multi-parameter families of polyhedra and their intermediate generating functions}

In the present article, we continue our study.  Our ultimate goal, which will
be achieved in the forthcoming article \cite{3-polynomials}, is to study
intermediate sums for families
of polyhedra governed by several parameters.  This interest is
motivated in part by the important applications in compiler optimization and
automatic code parallelization, in which multiple parameters arise naturally (see
\cite{Clauss1998parametric,Verdoolaege2005PhD,Verdoolaege2007parametric} and
the references within).  However, as we will explain below, the parametric
viewpoint enables us to prove fundamental results about intermediate sums
that are of independent interest.

Similar to
\cite{koeppe-verdoolaege:parametric,beck:multidimensional-reciprocity,HenkLinke}, let
$\p(b)$ be a parametric semi-rational \cite{so-called-paper-2} polyhedron in
$V=\R^d$, defined by inequalities
$$
\p(b)=\bigl\{\, x\in V : \la \mu_j,x\ra\leq b_j,\; j=1,\ldots,N \,\bigr\},
$$
where  $\mu_1,\mu_2,\ldots,\mu_N$ are \emph{fixed} linear forms with integer coefficients, and
the parameter vector $b=(b_1,\ldots, b_N)$ varies in $\R^N$.  The study of the counting functions
$b \mapsto S(\p(b), 1)$ then includes the classical vector partition functions
\cite{Brion1997residue} as a special case.  Then the counting
function is a piecewise quasi-polynomial function of the parameter
vector~$b$.\footnote{Again this fact is well-known for the ``classical'' case, when $b$ runs in
  $\Z^N$. That it holds as well for arbitrary \emph{real}
  parameters~$b$ follows from the computational works using the
  method of parametric generating functions
  \cite{Verdoolaege2007parametric,Verdoolaege2005PhD}; it is made
  explicit in \cite{koeppe-verdoolaege:parametric}.}
We can extend this result to the general weighted intermediate case in the forthcoming
paper~\cite{3-polynomials}. 

As in the well-known discrete case ($L=0$), it is a powerful method to take this study
to the level of generating functions.  We consider the
\textit{intermediate generating function}
\begin{equation}
S^L(\p(b))(\xi)= \sum_{y} \int_{\p\cap (y+L)} \e^{\la \xi,x\ra}\,\mathrm dx,
\end{equation}
for $\xi\in V^*$, where the summation variable $y$ again runs over a certain
projected lattice.  

If $\p(b)$ is a polytope, the  function
$S^L(\p(b))(\xi)$ is a holomorphic function of $\xi$. 
Then it is not hard to see that, once the generating
function~$S^L(\p(b))(\xi)$ is computed, the value $S^L(\p(b),h)$ for any polynomial
function~$h(x)$ can be extracted \cite{so-called-paper-1,so-called-paper-2}.  
Indeed, if $\xi\in V^*$,
then $h(x)=\frac{\la \xi,x\ra^m}{m!}$  is a polynomial function on $V$, homogeneous of degree $m$. Then
the homogeneous component
$ S^L(\p(b))_{[m]}(\xi) $ of the holomorphic
function~$S^L(\p(b))(\xi)$ is equal to the intermediate sum
$S^L(\p(b),h)$.
For a general polynomial function $h(x)$, the result then follows by well-known
decompositions as sums of powers of linear
forms~\cite{baldoni-berline-deloera-koeppe-vergne:integration}.

If $\p(b)$ is a polyhedron, not necessarily compact, the generating
function $S^L(\p(b))(\xi)$ still makes sense as a meromorphic function of~$\xi$
with hyperplane singularities (near $\xi=0$),
that is, the quotient of a function which is holomorphic near $\xi=0$
divided by a finite product of linear forms.  
Then, a well-known decomposition of space allows us to write the generating
function $S^L(\p(b))(\xi)$ of the polyhedron $\p(b)$ as the sum of the
generating functions of the affine cones $s+\c$ at the vertices (Brion's
theorem).
Note that within a chamber, $s$ will be an affine linear function of~$b$. 
A crucial observation is that
for such meromorphic functions, homogeneous components are still
well-defined, and the operation of taking homogeneous components commutes with
Brion's decomposition and other decompositions.

Brion's decomposition allows us to defer the discussion of the piecewise
structure of $S^L(\p(b))$ corresponding to the chamber decomposition of the
parameter domain to the forthcoming paper~\cite{3-polynomials}.  In the
present paper, we study the more fundamental question of the dependence of the
generating function $S^L(s+\c)(\xi)$ of an affine cone~$s+\c$ and its
homogeneous components $S^L(s+\c)_{[m]}(\xi)$ on the apex~$s$ and the dual
vector~$\xi$, i.e., as functions of the pair $(s,\xi)\in V\times V^*$.




\subsection{First contribution:  Bidegree structure of $S^L(s+\c)(\xi)$ in parameters $s$ and $\xi$}

Instead of the intermediate generating function $S^L(s+\c)(\xi)$, it is
convenient to study the shifted function
$$
M^L(s,\c)(\xi)= \e^{-\langle\xi,s\rangle}S^L(s+\c)(\xi);
$$
it already appears implicitly in the algorithms in our papers \cite{so-called-paper-1,so-called-paper-2}.
This function depends only on $s$ modulo $\Z^d+L$, in other words it is a function on $V/L$
which is periodic with respect to the projected lattice.

To illustrate the main features of this function, let us first
describe the dimension one case with $V =\R$,  $L=\{0\}$ and $\c=\R_{\geq 0}$.
We denote by $\{t\}$  the fractional part of a real number $t$,
defined by $\{t\}\in [0,1[$ and $t-\{t\}\in \Z$.
Then
\allowdisplaybreaks
\begin{align*}
  S(s+\c)(\xi)&=\sum_{n\geq \lceil s \rceil}\e^{n\xi} =\frac{\e^{\lceil s
      \rceil\xi}}{1-\e^{\xi}}=\frac{\e^{(s+\{-s\})\xi}}{1-\e^{\xi}},\\
  M(s,\c)(\xi)&=\e^{-s\xi}S(s+\c)(\xi)=\frac{\e^{\{-s\}\xi}}{1-\e^{\xi}}.
\end{align*}
The function $M(s,\c)(\xi)$ admits a decomposition into homogeneous
components, which in this example are given by the  Bernoulli polynomials:
$$
M(s,\c)_{[m]}(\xi)= - \frac{B_{m+1}(\{-s\})}{(m+1)!}\xi^m.
$$
Thus, as a function of $s\in \R$, $M(s,\c)_{[m]}(\xi)$ is a polynomial in
$\{-s\}$ of degree $m + 1$. (We will prove that, in general, it will be of
degree $m + d$, where $d$ is the dimension of~$V$; so the $\xi$-degrees and
the $s$-degrees are linked.)
Hence it is periodic (with period $1$),
and it coincides with a polynomial on each semi-open interval $]n,n+1]$. In particular, it is left-continuous.

To describe this structure in general, 
we introduce  the algebras of step-poly\-nom\-ials and quasi-polynomials on $V$.
A (rational) step-poly\-nomial is an element of  the algebra of functions on $V$ generated by the functions
$s\mapsto \{\langle\lambda,s\rangle\}$, where $\lambda\in \Q^d$. If
$q\lambda\in\Z^d$, then $s\mapsto \{\langle\lambda,s\rangle\}$ is a function
periodic modulo $q\Z^d$.
A quasi-polynomial is an element of the algebra of functions on $V$
generated by step-polynomials and ordinary polynomials.
Thus a step-polynomial is periodic with respect to some common multiple $q\Z^d$,
and a quasi-polynomial $f(s)$ is piecewise polynomial in the  sense that it restricts
to a polynomial function on any alcove associated with the
rational linear forms $\lambda\in \Q^d$ entering in its coefficients. (Alcoves are open polyhedral subsets of $V$
defined in Definition \ref{def:Psi-alcove}).


Our first result is to prove  that the homogeneous components
$M^L(s+\c)_{[m]}(\xi)$ are such step-polynomial functions of $s$, and we
compute their degrees as step-polynomials.
It turns out that the degree as step-polynomial functions of $s$  and the homogeneous degree in $\xi$
are linked.
This \emph{bidegree structure} gives a blueprint for constructing
algorithms, based on series expansions in a constant number of variables, 
that extract refined asymptotics from the generating function.  (We introduced
such algorithms in \cite{so-called-paper-1,so-called-paper-2} to compute
coefficients of Ehrhart quasi-polynomials; algorithms for general parametric
polyhedra will appear in the forthcoming paper~\cite{3-polynomials}.)

Furthermore, we prove that these homogeneous component functions enjoy the following property of
one-sided continuity
(Proposition~\ref{prop:alcoves}):
\emph{Let $s\in V$. For any $v\in L-\c$, we have
 $$
 \lim_{\substack{t\to 0\\t>0}}\retroS^L(s+tv, \c )_{[m]}(\xi)= \retroS^L(s, \c )_{[m]}(\xi).
 $$}

\subsection{Second contribution:  A new, Fourier-theoretic proof of the
  Baldoni--Berline--Vergne approximation theorem}

Our second main result concerns the  Fourier series of the periodic function
$s\mapsto M^L(s+\c)(\xi)$, in the sense of periodic $L^2$-functions 
with values in the space of meromorphic functions of $\xi$.
The Fourier coefficient at $\gamma\in \Z^d$ is easy to compute:
it is $0$ if $\gamma$ is not orthogonal to $L$,
 and otherwise it is the meromorphic continuation
 of the integral
$$
I(\c)(\xi + 2i \pi \gamma)=\int_{\c} \e^{\langle \xi + 2i \pi \gamma,x\rangle}\,\mathrm dx.
$$
We prove that $s\mapsto M^L(s+\c)(\xi)$  is the sum of its Fourier series, in the above sense.
As a corollary, Theorem~\ref{th:poisson}, we obtain the Fourier expansion
(Poisson formula) of the homogeneous component
\begin{equation}
\retroS^L(s,\c)_{[m]}(\xi)= \sum_{\gamma \in \lattice^*\cap L^\perp}
   \e^{\langle 2 i \pi \gamma,s\rangle}
\bigl(I(\c)(\xi+2i \pi \gamma)\bigr)_{[m]}.\label{eq:poisson-intro}
\end{equation}
For instance, in the dimension one case,
we recover the well-known Fourier series of the Bernoulli polynomials,  for $ k\geq 1$,
$$
\frac{B_{k}(\{s\})}{k!}= -\sum_{n\neq 0}\frac{\e^{2i\pi ns}}{(2i\pi n)^k}.
$$
We also determine the
poles and residues of $S^L(s+\c)(\xi)$ (Proposition~\ref{prop:residueSL}).  This will be of importance for the
forthcoming paper \cite{3-polynomials}.\smallbreak

We then give a new  proof of the Baldoni--Berline--Vergne theorem \cite{Baldoni-Berline-Vergne-2008}
on approximating the discrete generating function  $S(s+\c)$ of an affine  cone by  a linear combination of  functions $S^L(s+\c)$. 
We use  Fourier analysis in this proof, as did Barvinok
\cite{barvinok-2006-ehrhart-quasipolynomial} in his proof of his theorem
regarding the highest $k+1$ coefficients of Ehrhart quasi-polynomials. We sketch the crucial idea.

Let $0\leq k\leq d$ and let $\CL$ be a family of subspaces $L$ of $V$ which contains
the faces of codimension $\leq k$ of $\c$ and is closed under sum. Consider
the subset $\bigcup_{L\in \CL} L^{\perp}$ of $V^*$ and write its indicator
function  as a linear combination of the indicator functions of the spaces
$L^{\perp}$:$$ 
\Bigl[\bigcup_{L\in \CL} L^{\perp}\Bigr]=\sum_{L\in \CL}\rho(L)[L^{\perp}].
$$

We define a function that we call \emph{Barvinok's patched generating function}  by
$$S^{\CL}(s+\c)(\xi)=\sum_{L\in\CL}\rho(L) S^L(s+\c)(\xi).$$
Let us compute the difference
$S(s+\c)(\xi)-S^{\CL}(s+\c)(\xi)$.
By the Poisson formula
, we have (in the sense of local $L^2$-functions of~$s$)
$$
S(s+\c)(\xi)= \sum_{\gamma \in \lattice^*}I(s+\c)(\xi+2i \pi \gamma)$$
whereas for each of the terms corresponding to $L \in \CL$,
$$S^L(s+\c)(\xi)= \sum_{\gamma \in \lattice^*\cap L^\perp}
  I(s+\c)(\xi+2i \pi \gamma).$$
As $[\bigcup_{L\in \CL}L^{\perp}]=\sum_{L \in \CL} \rho(L)[L^{\perp}]$,
we obtain
\begin{equation}
  S(s+\c)(\xi)-S^{\CL}(s+\c)(\xi)=\sum_{ \substack {\gamma \in \Lambda^*\\ \gamma \notin
      \bigcup_{L\in\CL} L^\perp}}
  I(s+\c)(\xi+2i \pi \gamma).\label{eq:difference-s-patched-intro}
\end{equation}
The  poles of the function $\xi\mapsto  I(s+\c)(\xi+2i \pi \gamma)$  are  on a
collection of affine hyperplanes depending on the position of $\gamma$. 
For $\gamma$ outside of $\bigcup_{L\in\CL} L^\perp$, which is what we sum over
in~\eqref{eq:difference-s-patched-intro}, the homogeneous components of
$\xi$-degree $\leq -d+k$ vanish (Proposition \ref{prop:good-gamma}). 

As a consequence of the results on the bidegree structure,
Equation~\eqref{eq:difference-s-patched-intro} actually holds in the sense of
local $L^2$-functions of~$s$, separately for each of
the homogeneous components in $\xi$-degree.  Using the one-sided continuity
results, it follows that it actually holds as a pointwise result for all~$s$.

This yields a straightforward proof  of the
Baldoni--Berline--Vergne approximation theorem (Theorem~\ref{th:better-than-Barvinok}), i.e., the fact that
\emph{the functions $S(s+\c)(\xi)$ and  $ S^{\CL}(s+\c)(\xi)$
have the same homogeneous components  in $\xi$-degree $\leq -d+k$}.
We showed in  \cite{so-called-paper-1} that this approximation theorem implies 
generalizations of Barvinok's results in
\cite{barvinok-2006-ehrhart-quasipolynomial} on  the highest $k+1$
coefficients of Ehrhart quasi-polynomials.  In the forthcoming paper
\cite{3-polynomials}, we extend these results to the case of parametric polytopes.


\section{Intermediate generating functions $S^L_\lattice(\p)(\xi)$}

\subsection{Notations} \label{notation}
In this paper,  $V$ is a \emph{rational vector space}  of dimension $d$, that is
to say $V$ is a finite-dimensional real vector space with a lattice denoted
by  $\lattice$.   We will need to consider subspaces and quotient
spaces of $V$, this is why we cannot simply let $V=\R ^d$ and
$\lattice = \Z^d$.
A subspace $L$ of $V$ is called
\emph{rational} if $L\cap \lattice $ is a lattice in $L$. If $L$ is a
rational subspace, $V/L$ is also a rational vector space.
Its  lattice, the image of $\lattice$ in $V/L$,
is called the \emph{projected lattice} and denoted by $\lattice_{V/L}$.
A rational space $V$, with lattice $\lattice$, has a canonical
Lebesgue measure  for which  $V/\lattice$ has measure~$1$, denoted by $\mathrm dm_\lattice(x)$, or simply $\mathrm dx$.

A point $s\in V$ is called \emph{rational} if there exists $q\in \Z$, $q\neq 0$,  such that $q s\in \lattice$.
A \emph{rational affine subspace} is a rational  subspace  shifted by a rational element $s\in V$.
A \emph{semi-rational affine subspace} is a rational  subspace  shifted by any  element $s\in V$.

In this article, a \emph{(convex) rational polyhedron} $\p\subseteq V$ is  the intersection of a finite number of
closed halfspaces bounded by  rational hyperplanes,
a \emph{(convex) semi-rational polyhedron}  is  the intersection of a finite number of
closed halfspaces bounded by  semi-rational hyperplanes. The word convex will be implicit.
  For instance, if $\p\subset V $ is a
rational polyhedron, $t$~is a real number and $s$~is any point in~$V$, then
the dilated polyhedron $t\p$ and the shifted polyhedron
$s+\p$ are semi-rational. All polyhedra will be semi-rational in this paper.
When a stronger assumption is needed, it will be stated explicitely.

In this article, a \emph{cone} is a convex polyhedral rational  cone (with vertex $0$) and
an \emph{affine  cone} is the shifted set $s+\c$ of a  rational cone $\c$ by any $s\in V$.
 A  cone
$\c$ is called \emph{simplicial} if it is  generated by linearly independent
elements of $V $. A simplicial cone~$\c$ is called \emph{unimodular} if it
is generated by independent lattice vectors $v_1,\dots, v_k$ such
that $\{v_1,\dots, v_k\}$ is part of a basis of
$\lattice$. An affine cone~$\a$ is called simplicial (respectively,
unimodular) if the associated cone is.
An (affine) cone is called \emph{pointed} if it does not contain a line.

 A \emph{polytope} $\p$ is a  compact polyhedron.
 The set of vertices of $\p$ is denoted by $\CV(\p)$.
 For each vertex $s$, the cone of feasible directions at $s$  is denoted by
$\c_s$.

The dual vector space of $V$ is  denoted by $V^*$.
If $L$ is a subspace of $V$, we denote by $L^{\perp}\subset V^*$ the space of linear forms $\xi\in V^*$
which vanish on $L$.
The dual lattice of $\lattice$ is denoted by $\lattice^*\subset V^*$.
Thus $\langle\lattice^*,\lattice\rangle \subseteq \Z$.

 The indicator function of a subset  $E$ is  denoted by $[E]$.

\subsection{Basic properties of intermediate generating functions}
If $\p$ is a polytope in the vector space $V$, its generating function
$S_\lattice(\p)(\xi)$ defined by $\sum_{x\in \p\cap \lattice} \e^{\langle \xi,x\rangle}$
is a holomorphic function on the complexified dual space $V^*_\C$.
This is the reason why we consider functions on the dual space $V^*$ in the following definition.\smallskip

\begin{definition}~
  \begin{enumerate}[\rm(a)]
  \item We denote by $\CM_{\ell}(V^*)$ the ring of meromorphic functions
    around $0\in V^*_\C$ which can be written as a quotient
    $\frac{\phi(\xi)}{\prod_{j=1}^{N}  \la\xi,w_j\ra}$, where $\phi(\xi)$ is holomorphic
    near $0$ and $w_j$ are non-zero elements of $V$ in finite number.
  \item
    We denote by ${\mathcal R}_{[\geq m]}(V^*)$ the space of rational
    functions which can be written as $\frac{P(\xi)}{\prod_{j=1}^{N}
      \la\xi,w_j\ra}$, where $P$ is a homogeneous polynomial of degree greater
    or equal to $m+N$.  These rational functions are said to be
    \emph{homogeneous of degree at least~$m$}.
  \item We denote by ${\mathcal R}_{[ m]}(V^*)$ the space of rational
    functions which can be written as $\frac{P(\xi)}{\prod_{j=1}^{N}
      \la\xi,w_j\ra}$, where $P$ is homogeneous of degree $m+N$.
    These rational functions are said to be \emph{homogeneous of degree~$m$}.
  \end{enumerate}
\end{definition}
\tgreen{Added some wording from 3polys to update above definition.}

A function in $\CR_{[m]}(V^*)$ need not be  a polynomial, even if $m\geq0$.    For instance,
$\xi\mapsto {\frac{\xi_1}{\xi_2}}$ is homogeneous of degree $0$.

\begin{definition}
  For $\phi\in \CM_{\ell}(V^*)$, not necessarily a rational function, the
  \emph{homogeneous component} $ \phi_{[m]}$ of degree~$m$ of~$\phi$ is
  defined by considering $\phi(\tau \xi)$ as a meromorphic function of one
  variable~$\tau\in\C$, with Laurent series expansion
 $$
 \phi(\tau \xi) = \sum_{m\geq m_0} \tau^m \phi_{[m]}(\xi).
 $$
\end{definition}
Thus $\phi_{[m]}\in {\mathcal R}_{[m]}(V^*)$.

The intermediate generating functions of polyhedra which we study in  this article are elements of $\CM_{\ell}(V^*)$ which enjoy the following valuation property.
\begin{definition}
    An  $\CM_{\ell}(V^*)$-valued \emph{valuation} on the set  of semi-rational
    polyhedra $ \p\subseteq V$
 is a  map $F$
from this  set to the vector space  $\CM_{\ell}(V^*)$
such that whenever the indicator functions $[\p_i]$ of a
family of polyhedra $\p_i$ satisfy a linear relation $\sum_i r_i
[\p_i]=0$, then the elements $F(\p_i)$ satisfy the same
relation
$$
\sum_i r_i F(\p_i)=0.
$$
\end{definition}

Recall some of the definitions we introduced in \cite{so-called-paper-2}.
\begin{proposition}\label{prop:valuationSL}
Let $L\subseteq V$ be a rational subspace. There exists a unique
valuation  which associates a meromorphic function $S^L_\lattice(\p)\in \CM_{\ell}(V^*)$
to every semi-rational polyhedron $\p\subseteq
V$, so that
the following properties hold:

\begin{enumerate}[\rm(i)]
\item  If $\p$ contains a line, then $S^L_\lattice(\p)=0$.

\item
  \begin{equation}\label{eq:SL}
    S^L_\lattice(\p)(\xi)= \sum_{y\in \lattice_{V/L}} \int_{\p\cap (y+L)} \e^{\la
      \xi,x\ra}\,\mathrm dx,
  \end{equation}
  for every $\xi\in V^*$ such that the above sum converges.
\end{enumerate}
Here, $\lattice_{V/L}$ is the  projected  lattice
and $\mathrm dx$ is the Lebesgue measure on $y+L$ defined by the intersection lattice $L\cap\lattice$.
\end{proposition}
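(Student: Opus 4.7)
The proof has two parts: uniqueness and existence.

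\emph{Uniqueness.} The Brion--Brianchon--Gram identity for indicator functions of semi-rational polyhedra gives
$$
[\p] \;\equiv\; \sum_{v\in\CV(\p)}[v+\c_v] \pmod{\text{indicators of polyhedra containing a line}},
$$
so any valuation $F$ satisfying~(i) is determined by its values on pointed affine cones via $F(\p) = \sum_{v\in\CV(\p)} F(v+\c_v)$. On a pointed affine cone $s+\c$, the series defining~(ii) converges absolutely for $\xi$ in the open set $U = \{\xi\in V^*_\C : \Re\langle\xi, w\rangle<0 \text{ for every non-zero }w\in\c\}$, which is nonempty precisely because $\c$ is pointed. Thus property~(ii) pins down $F(s+\c)$ on~$U$, and hence as an element of $\CM_{\ell}(V^*)$ by analytic continuation.

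\emph{Existence.} I would construct $S^L_\lattice$ in three steps. First, on a pointed simplicial affine cone $s+\sum_{i=1}^{k}\R_{\geq 0}w_i$ whose generators are in generic position with respect to $L$, use the projection $\pi:V\to V/L$ to split the lattice slices: the outer summation variable $y\in \lattice_{V/L}$ ranges over lattice points of a pointed simplicial cone in $V/L$, and for each $y$ the inner integral over the slice $(s+\c)\cap (y+L)$ is a standard exponential integral over a polyhedron inside an affine subspace parallel to~$L$. Summing the resulting geometric series yields an explicit meromorphic formula in $\CM_{\ell}(V^*)$ that agrees with~(ii) on~$U$. Second, extend to arbitrary pointed affine cones by simplicial subdivision, and set $S^L_\lattice(\p)=0$ on polyhedra containing a line. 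Third, extend to all semi-rational polyhedra by the Brion formula above.

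It then remains to verify the valuation property. Given a linear relation $\sum_i r_i [\p_i] = 0$, one reduces via Brion decomposition and simplicial subdivision to a relation among indicator functions of pointed simplicial affine cones in generic position; there, the equality of the meromorphic continuations follows from the equality of the defining absolutely convergent sums on the common region~$U$. The main obstacle is the degenerate case in which a ray of $\c$ lies in~$L$, so that the slices become unbounded subcones of the cone being summed over; one handles this by first proving the formula under a genericity assumption on $w_1,\ldots,w_k$ and then passing to the limit inside $\CM_{\ell}(V^*)$, using the stability of this class of meromorphic functions under such limits. This construction is carried out in detail in~\cite{so-called-paper-2}, from which the present proposition is recalled.
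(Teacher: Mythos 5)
First, a point of comparison: the paper does not actually prove this proposition. It is recalled from \cite{so-called-paper-2}, and the only ingredient re-justified in the present paper is the membership $S^L_\lattice(\p)\in\CM_{\ell}(V^*)$, via the explicit computations of Section 2.3 (the product formula \eqref{eq:SLI}, the Brion--Vergne decomposition, Lemma~\ref{lemma:prodwj-SL} and Proposition~\ref{prop:polesSL}). So your sketch must stand on its own. The uniqueness half is fine and is the standard argument: Brion's decomposition together with property (i) reduces everything to pointed affine cones $s+\c$, the series (ii) converges on the nonempty open cone $U=\{\xi:\Re\la\xi,w\ra<0 \text{ for }w\in\c\setminus\{0\}\}$, and an element of $\CM_{\ell}(V^*)$ is determined by its restriction to $U$ (which meets every neighbourhood of $0$, $U$ being a cone).

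Two steps in your existence/valuation argument conceal real difficulties. First, for a simplicial cone in ``generic position'' with respect to $L$, the inner integrals $\int_{(s+\c)\cap(y+L)}\e^{\la\xi,x\ra}\,\mathrm dx$ are \emph{not} the terms of a geometric series: the slices change shape as $y$ varies (for the positive quadrant and $L=\R(1,-1)$ they are segments of growing length), so these integrals are not of the form $c\,\e^{\la\xi,\tilde y\ra}$ and cannot simply be ``summed as a geometric series.'' This is precisely why \cite{so-called-paper-2} and Section 2.3 here go in the opposite direction: one uses the Brion--Vergne decomposition (Proposition~\ref{prop:brion-vergne-decomposition}) to reduce to simplicial cones having a \emph{face parallel to} $L$ --- the non-generic position --- for which the slices of the projection are constant in the relevant sense and $S^L$ factors as the product \eqref{eq:SLI} of a lower-dimensional discrete sum and an integral. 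Second, your verification of the valuation property by ``checking the relation on the common region $U$'' has a gap: a general linear relation $\sum_i r_i[\p_i]=0$ involves pointed cones pointing in different directions, whose convergence domains may be pairwise disjoint (already $[\R_{\geq0}]+[\R_{\leq0}]=[\R]+[\{0\}]$ in dimension one), so in general there is \emph{no} common open set on which all defining series converge. Bridging this is the substance of the Lawrence/Khovanskii--Pukhlikov/Barvinok theorem on exponential valuations and requires a separate algebraic continuation argument. Deferring to \cite{so-called-paper-2} is legitimate --- the paper does exactly that --- but as written your sketch glosses over the two places where the actual work lies.
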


\begin{definition}
    $S^L_\lattice(\p)(\xi)$ is called the  \emph{intermediate  generating function} of $\p$
(associated to the subspace $L$).
\end{definition}
When there is no risk of confusion, we will  drop the subscript $\lattice$.

 The intermediate generating function
interpolates between the integral
$$
I_\lattice(\p)(\xi)= \int_\p \e^{\la \xi,x\ra} \,\mathrm dx
$$
which corresponds to $L=V$,  and the discrete sum
$$
S_\lattice(\p)(\xi)= \sum_{x\in \p\cap \lattice}\e^{\la \xi,x\ra}
$$
which corresponds to $L=\{0\}$.

 Formula \eqref{eq:SL} does not hold around $\xi=0$ when $\p$ is not compact. Near  $\xi=0$,
 $S^L_\lattice(\p)(\xi)$ has to be defined  by analytic continuation.
 For instance in dimension one, with $L=V=\R $, the integral
 $ \int_0^\infty \e^{\xi x } \,\mathrm dx = -\frac{1}{\xi}$
 converges only for $\xi<0$.
 Similarly the discrete sum $\sum_{n\geq 0}\e^{\xi n}=\frac{1}{1-\e^{\xi}}$ converges only for $\xi<0$.

Let us give the simplest example of the
 meromorphic functions   $S^L_\lattice(\p)(\xi)$ so obtained
when $V=\R$ is one-dimensional.
The formulae  are written in terms of  the \emph{fractional part} $\{t\}\in [0,1[$  of a real number $t$,
 such that $t-\{t\}\in \Z$ (see Figure \ref{figure1}).
\begin{example}\label{ex:dimone}
Let $V=\R$,   $\lattice=\Z$, and consider the cones $\R_{\geq 0}$ and $\R_{\leq0}$.
Then
 \begin{align*}
   S_\Z^{{\R}}(s+\R_{\geq 0})(\xi) & = -\frac{\e^{s\xi}}{\xi}, &
   S_\Z^{{\R}}(s+\R_{\leq0})(\xi) & = \frac{\e^{s\xi}}{\xi} \\
   \intertext{while}
   S_\Z^{\{0\}}(s+\R_{\geq 0})(\xi) &=\e^{s\xi}\frac{\e^{\{-s\}\xi}}{1-\e^{\xi}}, &
   S_\Z^{\{0\}}(s+\R_{\leq0})(\xi) &=\e^{s\xi}\frac{\e^{-\{s\}\xi}}{1-\e^{-\xi}}.
 \end{align*}
\end{example}

For $L\neq \{0\}$, the fact that
$S^L_\lattice(\p) $ is actually  an element of $\CM_{\ell}(V^*)$, is proven in \cite{so-called-paper-2}, as a
consequence of  explicit computations which we recall in the next section.

The valuation $\p\mapsto S^L_\lattice(\p)(\xi)$ extends by linearity to the space of linear combinations of
indicator functions of semi-rational polyhedra.
In particular, if $\p$ is a semi-open polytope defined by rational equations and inequalities,
$S^L_\lattice(\p)(\xi)$ is holomorphic and still given by Equation \eqref{eq:SL}.

Let us note an obvious but important property.
\begin{lemma}\label{lemma:lattice-shift}
  If $v\in \lattice+L $, let $v+\p$ be the shifted polyhedron, then
  $$
  S^L_\lattice(v+\p)(\xi)= \e^{\langle \xi,v\rangle} S^L_\lattice(\p)(\xi).
  $$
\end{lemma}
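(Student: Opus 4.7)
The plan is to verify the identity directly on the region of convergence of the defining sum~\eqref{eq:SL}, then extend to all of $\CM_\ell(V^*)$ by analytic continuation (equivalently, by uniqueness of the meromorphic function $S^L_\lattice(\p)$).

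First I would write the hypothesis as $v = \ell + u$ with $\ell \in \lattice$ and $u \in L$, so that the image $\bar v$ of $v$ in $V/L$ coincides with the image of $\ell$ and, in particular, lies in the projected lattice $\lattice_{V/L}$. Next, assume temporarily that $\p$ is such that there is a non-empty open set of $\xi \in V^*_\C$ on which the sum--integral defining $S^L_\lattice(v+\p)(\xi)$ converges absolutely (for instance, $\p$ a pointed affine cone, which is the only case that matters after applying Brion's decomposition and valuations). On this set, I would compute
\begin{align*}
S^L_\lattice(v+\p)(\xi)
&= \sum_{y \in \lattice_{V/L}} \int_{(v+\p)\cap(y+L)} \e^{\langle \xi, x\rangle}\,\mathrm dx \\
&= \sum_{y \in \lattice_{V/L}} \int_{\p \cap (y - \bar v + L)} \e^{\langle \xi, v + x'\rangle}\,\mathrm dx'
\end{align*}
by the substitution $x = v + x'$, which preserves the Lebesgue measure on an affine translate of $L$ (since $u \in L$). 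Factoring out $\e^{\langle \xi, v\rangle}$ and re-indexing via $y' = y - \bar v$, which is a bijection of $\lattice_{V/L}$ onto itself, yields the right-hand side $\e^{\langle \xi, v\rangle} S^L_\lattice(\p)(\xi)$.

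To handle the general case, I would invoke the uniqueness part of Proposition~\ref{prop:valuationSL}: the map $\p \mapsto \e^{-\langle\xi,v\rangle} S^L_\lattice(v+\p)(\xi)$ is again an $\CM_\ell(V^*)$-valued valuation on semi-rational polyhedra, vanishing on polyhedra containing a line (since $v+\p$ contains a line iff $\p$ does) and agreeing with the integral formula~\eqref{eq:SL} on its domain of convergence. By uniqueness, it must coincide with $S^L_\lattice(\p)(\xi)$. Alternatively, since both sides are meromorphic in $\xi$, agreement on a non-empty open set of $\xi$ forces agreement as elements of $\CM_\ell(V^*)$.

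There is no real obstacle here; the only subtle point is the bookkeeping that $\bar v \in \lattice_{V/L}$ precisely because $v \in \lattice + L$ (if $v$ merely belonged to $V$, the re-indexing of the outer sum would fail, which is exactly why the hypothesis is needed). The argument then reduces to the fact that the Lebesgue measure on the affine slices is translation-invariant under shifts by elements of $L$, together with the standard change-of-variable factor $\e^{\langle \xi, v\rangle}$.
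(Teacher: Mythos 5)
Your proof is correct. The paper states this lemma without proof (calling it ``an obvious but important property''), and your argument --- the change of variables $x = v+x'$ on the region of convergence, the observation that $\bar v\in\lattice_{V/L}$ allows the re-indexing of the outer sum, and the extension to all semi-rational polyhedra via the uniqueness clause of Proposition~\ref{prop:valuationSL} (or meromorphic continuation) --- is exactly the standard justification the authors leave implicit.
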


\subsection{Case of  a simplicial cone}\label{sect:simplicial}
We recall some results of \cite{so-called-paper-2}.

\medskip

We look first at the discrete generating function. Let $\c$ be a simplicial
cone and let  $v_i$, $i=1,\dots, d$,
be lattice generators of its edges (we do not assume that the $v_i$'s are primitive).
Let $\b=\sum_{i=1}^d [0,1\mathclose[\, v_i$, the corresponding \emph{semi-open cell}.
Let $\vol_\lattice(\b)= \mathopen| \det_\lattice(v_1,\dots,v_d) \mathclose|$ be its volume
  with respect to the Lebesgue measure defined by the   lattice.
  Then
\begin{equation}\label{eq:I}
 I_\lattice(s+\c)(\xi)=\e^{\la \xi,s \ra} \frac{(-1)^d \vol_\lattice(\b)} {\prod_{i=1}^d\la \xi,v_i\ra}.
 \end{equation}
The discrete generating function for the cone  can be expressed in terms of that of the semi-open cell:
\begin{equation}\label{eq:Sbox}
S_\lattice(s+\c)(\xi)=S_\lattice(s+\b)(\xi) \frac{1}{\prod_{i=1}^d(1- \e^{\la \xi,v_i\ra})}.
\end{equation}

It follows from \eqref{eq:Sbox} that the intermediate functions $S_\lattice(s+\c)(\xi)$
 belong to the space $\CM_{\ell}(V^*)$. More precisely, their poles are given by the edges of the cone.
\begin{lemma}\label{lemma:prodvj-S}
 Let $\c$ be a  polyhedral cone with edge generators
 $v_i,i=1,\dots, N$  and let   $s\in V$.
The function
$\prod_{i=1}^N\la \xi, v_i\ra S_\lattice(s+\c)(\xi)$ is holomorphic near $\xi=0$.
\end{lemma}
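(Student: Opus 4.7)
The plan is to reduce to the simplicial case, for which we can apply the explicit formula \eqref{eq:Sbox}, via a triangulation that does not introduce new rays. First, if $\c$ contains a line then $S_\lattice(s+\c)=0$ by property~(i) of Proposition~\ref{prop:valuationSL}, and the claim is trivial. So we may assume $\c$ is pointed.

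Since $\c$ is pointed, it admits a triangulation into simplicial subcones $\c_\alpha$ whose edge generators are all drawn from the edge generators $v_1,\dots,v_N$ of~$\c$; for instance, a pulling triangulation from one of the rays of~$\c$ never introduces new rays. The common faces of the $\c_\alpha$ are again simplicial cones (of possibly smaller dimension) whose edges lie among $v_1,\dots,v_N$. By inclusion--exclusion, the indicator function $[\c]$ can be written as a finite $\Z$-linear combination
\[
[\c]=\sum_\gamma n_\gamma\, [\c_\gamma],
\]
where each $\c_\gamma$ is a simplicial (possibly lower-dimensional) cone whose edge generators are among $\{v_1,\dots,v_N\}$. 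Applying the valuation property (Proposition~\ref{prop:valuationSL}) yields
\[
S_\lattice(s+\c)(\xi)=\sum_\gamma n_\gamma\, S_\lattice(s+\c_\gamma)(\xi).
\]

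For each simplicial $\c_\gamma$, applying Equation~\eqref{eq:Sbox} inside the rational subspace spanned by $\c_\gamma$ (with the induced lattice) gives
\[
S_\lattice(s+\c_\gamma)(\xi)=\frac{H_\gamma(\xi)}{\prod_{v\,\text{edge of } \c_\gamma}\bigl(1-\e^{\la \xi,v\ra}\bigr)},
\]
where $H_\gamma(\xi)=S_\lattice(s+\b_\gamma)(\xi)$ is a finite exponential sum, hence entire in $\xi$. Because $1-\e^{\la \xi,v\ra}=-\la \xi,v\ra \cdot u_v(\xi)$ with $u_v(0)=1$ holomorphic and non-vanishing near $\xi=0$, the only poles of $S_\lattice(s+\c_\gamma)(\xi)$ near the origin lie on the hyperplanes $\la \xi,v\ra=0$ for $v$ an edge of $\c_\gamma$. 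Since all such edges are among $v_1,\dots,v_N$, the product $\prod_{i=1}^N\la \xi,v_i\ra\, S_\lattice(s+\c_\gamma)(\xi)$ is holomorphic near $0$. Summing the finitely many terms yields the lemma.

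The only delicate point is arranging the triangulation so that no new rays appear, and keeping track of the lower-dimensional intersection cones in the inclusion--exclusion; once these are handled, the estimate of the pole set of each simplicial piece from \eqref{eq:Sbox} is immediate. In particular, the argument also gives the slightly sharper statement that the poles of $S_\lattice(s+\c)(\xi)$ near $\xi=0$ lie on the union of hyperplanes $\la \xi,v_i\ra=0$ indexed by the edge generators of $\c$.
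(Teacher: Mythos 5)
Your proof is correct and follows essentially the same route as the paper's: treat the simplicial case via Equation~\eqref{eq:Sbox}, then reduce the general case by a triangulation of $\c$ without added edges together with the valuation property, with the lower-dimensional pieces of the inclusion--exclusion handled exactly as in the paper's footnote. The only cosmetic difference is that you keep the lower-dimensional intersection cones explicitly (applying \eqref{eq:Sbox} in their spans with the induced lattice), whereas the paper also notes alternative decompositions using semi-open or closed full-dimensional cones; both variants yield the same conclusion.
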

\begin{proof}
The case where  the cone $\c$ is simplicial follows immediately from \eqref{eq:Sbox},
as $S_\lattice(s+\b)(\xi)$ is holomorphic.
The general case follows from the valuation property,
by using a decomposition of $\c$ into simplicial cones without added edges.%
\footnote{Suppose $\c$ is generated by $v_i$, $i=1,\dots,N$. 
  Triangulating $\c$ without adding edges gives a
  primal decomposition of the form $[\c] = \sum_j [\c_j] + \sum_j \epsilon_j
  [\mathfrak l_j]$, where $\c_j$ are
  (full-dimensional) simplicial cones and $\mathfrak l_j$ are lower-dimensional cones
  that arise in an inclusion-exclusion formula with coefficients
  $\epsilon_j$. 
  Both $\c_j$ and $\mathfrak l_j$ are generated by subsets of $v_i$, $i=1,\dots,N$.
  As shown in \cite{koeppe-verdoolaege:parametric,Brion1997residue}, we can
  also construct decompositions that only involve full-dimensional cones. 
  For example, as shown in \cite{koeppe-verdoolaege:parametric}, we can find a
  decomposition of the form $[\c] = \sum_j [\tilde\c_j]$, where
  $\tilde\c_j$ are full-dimensional semi-open cones whose closures are $\c_j$.  
  Then, as shown in \cite{Brion1997residue},
  modulo indicator functions of cones with lines, we can replace the semi-open
  cones~$\tilde\c_j$ by closed cones $\bar\c_j$ and thus obtain a
  decomposition of the form 
  $[\c] \equiv \sum_j \epsilon_j [\bar\c_j]$ (modulo indicator functions of cones with
  lines), where $\epsilon_j\in\{\pm1\}$ and $\bar\c_j$ are simplicial cones
  which are generated by subsets of $\pm v_i$, $i=1,\dots, N$. 
}
\end{proof}
\medskip

Next, we consider the intermediate generating function in the case where   $\c$ is simplicial and $L$ is one of its faces. In this case,
the intermediate generating function $S^L_\lattice(s+\c)$
decomposes as a product.
For $I\subseteq \{1,\ldots, d\}$, we denote by $L_I$ the
linear span of the vectors $v_i$, $i\in I$.  Let $I^c$ be the complement of
$I$ in $\{1,\ldots ,d\}$.
For $x\in
V$, we  write $x=x_I + x_{I^c}$  with respect to the decomposition  $V=L_I\oplus L_{I^c}$.
Thus we identify the quotient $V/L_{I}$ with $L_{I^c}$ and   we
denote the projected lattice by $\lattice_{I^c}\subset L_{I^c}$.
Write   $\c_{I}$ for  the cone  generated by the vectors  $v_j$, for $j\in
I$ and $\c_{I^c}$ for the cone generated by  the vectors  $v_j$, for $j\in
I^c.$
The projection of the cone $\c$ on $V/L_I=L_{I^c}$ identifies with $\c_{I^c}$.
We write also  $\xi=\xi_I + \xi_{I^c}$,  with respect to the decomposition $V^*= L_I^*\oplus L_{I^c}^*$.
Then we have the product formula
\begin{equation}\label{eq:SLI}
 S^{L_I}_\lattice(s+\c)(\xi) =
 S_{\lattice_{I^c}}(s_{I^c}+\c_{I^c})( \xi_{I^c})\, I_{\lattice\cap L_I
 }(s_I+\c_I)(\xi_I).
\end{equation}
\smallbreak

Finally, the general case is reduced to the case where $\c$ is simplicial and
$L$ is parallel to one of its faces by the Brion--Vergne decomposition (Theorem 19 
in \cite{so-called-paper-2}),
which we summarize in the following proposition and illustrate on an example in Figure~\ref{fig:brion-vergne-2d}.
\begin{figure}[t]
  \noindent\hspace*{-1em}
  \begin{tabular}{@{}l@{}l@{}}
 \begin{minipage}[c]{.35\linewidth}%
   \scalebox{.75}{\ifpdf
  \input{brionvergne-2d.pdf_t}
   \else
   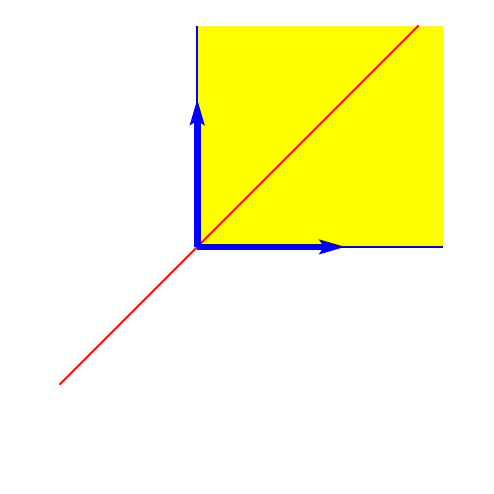
   \fi}%
 \end{minipage}%
 &\hspace*{-1em}{\Huge$\equiv$}\hspace*{-1em}%
 \begin{minipage}[c]{.31\linewidth}%
   \scalebox{.75}{%
     \ifpdf
  \input{brionvergne-2d-cone1.pdf_t}
   \else
   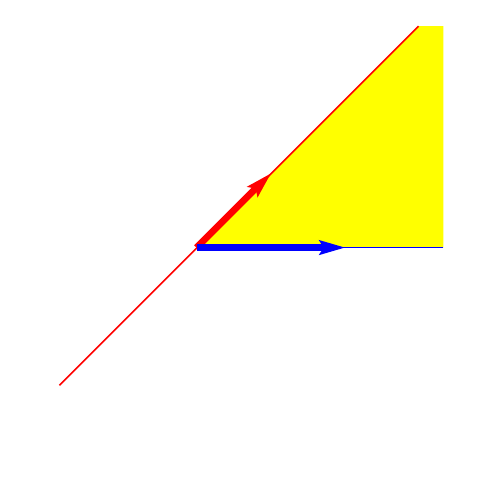
   \fi}%
 \end{minipage}%
 \hspace*{0em}{\Huge$-$}\hspace*{-1em}%
 \begin{minipage}[c]{.35\linewidth}%
   \scalebox{.75}{%
     \ifpdf
  \input{brionvergne-2d-cone2.pdf_t}
   \else
   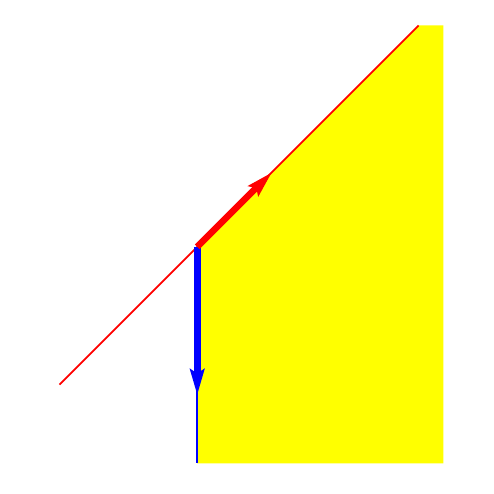
   \fi%
   }
 \end{minipage}\\[10ex]%
 &\hspace*{-1em}{\Huge$\equiv-$}\hspace*{0.5em}%
 \begin{minipage}[c]{.31\linewidth}%
   \scalebox{.75}{%
     \ifpdf
  \input{brionvergne-2d-cone1prime.pdf_t}
   \else
   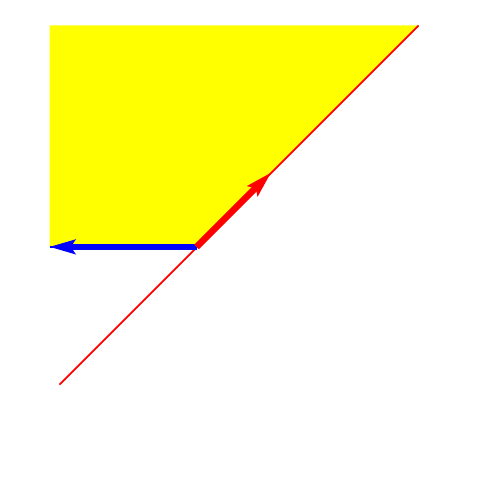
   \fi}%
 \end{minipage}%
 \hspace*{-1.5em}{\Huge$+$}\hspace*{0em}%
 \begin{minipage}[c]{.31\linewidth}%
   \scalebox{.75}{%
     \ifpdf
  \input{brionvergne-2d-cone2prime.pdf_t}
   \else
   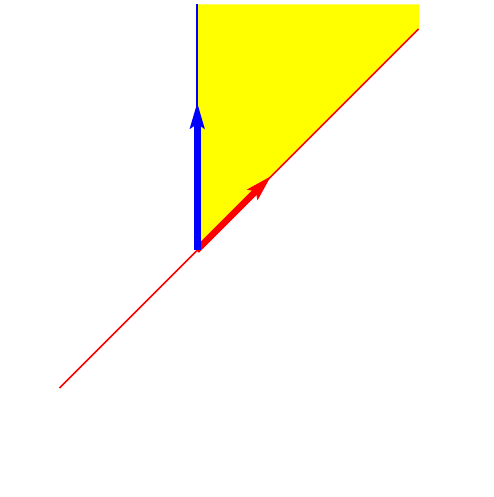
   \fi}%
 \end{minipage}%
\end{tabular}\vspace{-3ex}
 \caption{Two Brion--Vergne decompositions of a cone~$\c$ into cones with a face
   parallel to the subspace~$L$, modulo cones with lines.  
 }
 \label{fig:brion-vergne-2d}
\end{figure}

\begin{proposition}[Brion--Vergne decomposition]\label{prop:brion-vergne-decomposition}
  Let $\c$ be a full-dimen\-sional cone. Then there exists a decomposition of its indicator function
 $[\c]\equiv \sum_i\epsilon_i [\c_i]$, where each $\c_i$ is a simplicial full-dimensional cone
 with  a face parallel to $L$, and the congruence holds modulo the space spanned by  indicators functions of
 cones which contain  lines.
\end{proposition}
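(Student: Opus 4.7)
The strategy is a two-level reduction: first triangulate $\c$ into full-dimensional simplicial cones, then decompose each simplicial piece so that one of its faces becomes parallel to~$L$.

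For the simplicial reduction, I would take any triangulation of $\c$ using only its original edge generators, obtaining a primal decomposition $[\c] = \sum_j [\c_j] + \sum_j \eta_j [\mathfrak{l}_j]$ with full-dimensional simplicial $\c_j$ and lower-dimensional correction terms $\mathfrak{l}_j$ arising from inclusion--exclusion on shared facets. The trick sketched in the footnote to Lemma~\ref{lemma:prodvj-S} converts this, modulo indicator functions of cones containing lines, into a signed sum $[\c] \equiv \sum_j \epsilon_j [\tilde\c_j]$ in which each $\tilde\c_j$ is a closed full-dimensional simplicial cone whose edges come from $\{\pm v_i\}$. So we may assume that $\c$ is itself full-dimensional simplicial, with edge generators $v_1,\dots,v_d$.

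The core step is a one-edge insertion lemma: if $\sigma$ is full-dimensional simplicial with edges $u_1,\dots,u_d$ and $w=\sum_i a_i u_i$ has all $a_i\ne 0$, then
$$
[\sigma] \equiv \sum_{i=1}^d \operatorname{sign}(a_i)\,[\sigma_i] \pmod{\text{cones containing lines}},
$$
where $\sigma_i$ is obtained from $\sigma$ by replacing the edge $u_i$ with $\operatorname{sign}(a_i)\,w$. I would prove this by the standard Brion--Vergne chamber enumeration for the fan refinement of $\operatorname{span}(\sigma)$ induced by cutting with each hyperplane $H_i=\operatorname{span}(w, u_1,\dots,\hat u_i,\dots, u_d)$: every resulting full-dimensional chamber either equals one of the $\sigma_i$ (possibly after reflection through $0$, accounting for the sign) or contains a line, and the signs record the orientation. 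An alternative verification is algebraic, using that elements of $\CM_\ell(V^*)$ with denominator $\prod_i (1-\e^{\langle \xi, u_i\rangle})$ that vanish as meromorphic functions correspond precisely to the combinations supported on cones containing lines.

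The remaining task is to iterate the insertion lemma along a $\Z$-basis $w_1,\dots,w_k$ of $L\cap\lattice$. At step $j$, every surviving simplicial cone already carries $w_1,\dots,w_{j-1}$ among its edges. I would modify $w_j$ by a $\Z$-combination of $w_1,\dots,w_{j-1}$ (which does not alter the lattice generated) so that, with respect to every currently surviving cone, the coefficients of $w_j$ on the $d-(j-1)$ remaining ``$v$-type'' edges are all nonzero; this is a non-empty condition because $L\cap\lattice$ is full-rank in $L$ and only a finite union of hyperplanes is excluded. Then I would apply the insertion lemma to the $v$-type edges only, leaving the $w_l$-edges fixed; at the level of indicator functions this is legitimate because the simplicial cone splits as a product $\operatorname{cone}(w_1,\dots,w_{j-1}) + (\text{transverse simplicial cone})$, and applying the lemma to the second factor and then re-adding the first produces exactly the asserted identity modulo cones with lines. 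After $k$ iterations, every terminal cone is full-dimensional simplicial with $w_1,\dots,w_k$ among its edges, so the $k$-dimensional face they span has linear hull~$L$. The main obstacle is precisely this transversal bookkeeping: one must verify both that the perturbations of $w_j$ exist as lattice vectors and that ``product with a cone with lines is a cone with lines'' carries the modular relations cleanly through each iteration step.
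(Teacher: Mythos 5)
Your reduction to the simplicial case is sound and is exactly the reduction the paper makes; at that point, however, the paper simply invokes Theorem~19 of \cite{so-called-paper-2} for a simplicial cone, whereas you attempt to reprove that case from scratch, so the whole weight of your argument rests on the ``one-edge insertion lemma'' and its iteration---and both have genuine problems.

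The insertion lemma is false as stated. Take $d=2$, $\sigma=\cone(e_1,e_2)$, $w=e_1-e_2$, so $a_1=1$, $a_2=-1$. Writing $A=\cone(w,e_2)=\{x\ge0,\ x+y\ge0\}$ and $B=\cone(e_1,-w)=\{y\ge0,\ x+y\ge0\}$, one has exactly $[A]+[B]=[\{x+y\ge0\}]+[\sigma]$, hence $[\sigma]\equiv[A]+[B]$ modulo cones with lines, whereas your formula asserts $[\sigma]\equiv[A]-[B]$; the discrepancy $2[B]$ is the indicator of a pointed full-dimensional cone, which does not lie in the span of indicators of cones with lines (the valuation $S$ annihilates the latter but not the former). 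Even in the all-positive case $w=e_1+e_2$ your identity is off by the indicator of the ray $\R_{\geq 0}w$, again not a combination of cones with lines; this is precisely why the paper's own decomposition for $L=\R(1,1)$ in Example~\ref{ex:standard-dim2} and Figure~\ref{fig:brion-vergne-2d} negates one of the original generators ($\c_2=\cone(-e_2,\,e_1+e_2)$, with correction term a halfplane) rather than keeping all the $u_j$ with $j\ne i$. Getting a relation that holds modulo cones with lines (and not merely modulo lower-dimensional cones) requires the half-open/polarization bookkeeping of Brion, and your chamber sketch does not supply it---indeed your $H_i=\operatorname{span}(w,u_1,\dots,\hat u_i,\dots,u_d)$ is all of $V$ when $a_i\ne0$, not a hyperplane. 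Separately, the transversality step of your iteration is ineffective: with respect to the basis consisting of $w_1,\dots,w_{j-1}$ and the remaining $v$-type edges, adding a $\Z$-combination of $w_1,\dots,w_{j-1}$ to $w_j$ changes only its coordinates on $w_1,\dots,w_{j-1}$ and leaves its coefficients on the $v$-type edges untouched, so it cannot make them nonzero. Both defects are repairable---the repaired argument is essentially the content of Theorem~19 of \cite{so-called-paper-2}---but as written the proof does not go through.
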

\begin{proof}
The case where $\c$ is simplicial is Theorem 19 in \cite{so-called-paper-2}. Moreover,
for any full-dimensional cone $\c$, there exists a decomposition  $[\c]\equiv
\sum_a\epsilon_a [\c_a]$ (modulo indicator functions of cones with lines), where $\c_a$ are
full-dimensional simplicial cones.\footnote{The most well-known way to construct such a
  decomposition is using the ``duality trick'' (going back to \cite{Brion88}):
  We triangulate the dual cone $\c^\circ \subseteq V^*$ and obtain a
  decomposition $[\c^\circ] \equiv \sum_a [\c^\circ_a]$ (modulo indicator
  functions of lower-dimensional cones of~$V^*$), where $\c^\circ_a$ are simplicial
  cones of~$V^*$.  This implies the decomposition $[\c^\circ] \equiv \sum_a
  [(\c^\circ_a)^\circ]$ (modulo cones with lines). 
}
\end{proof}

Lemma \ref{lemma:prodvj-S} holds also for intermediate generating functions $S^L_\lattice(s+\c)(\xi)$.
We will deduce it below from the Poisson summation formula for $S^L_\lattice(s+\c)(\xi)$, (Theorem \ref{th:poisson}),
and  the following weaker result.
\begin{lemma}\label{lemma:prodwj-SL}
For a given cone $\c$, there exist a finite number of vectors $w_j\in \lattice$ such that
$\prod_{j=1}^N\langle \xi,w_j\rangle S^L_\lattice(s+\c)(\xi)$ is holomorphic near $\xi=0$. In other words,  the function
$ S^L_\lattice(s+\c)(\xi)$ belongs to the space $\CM_{\ell}(V^*)$.
\end{lemma}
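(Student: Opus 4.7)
The plan is to reduce the statement, via the Brion--Vergne decomposition and the valuation property, to the case of a simplicial cone with a face parallel to $L$, where the product formula \eqref{eq:SLI} makes the pole structure explicit.

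First, I would apply Proposition~\ref{prop:brion-vergne-decomposition} to write $[\c] \equiv \sum_i \epsilon_i [\c_i]$ modulo indicator functions of cones containing lines, where each $\c_i$ is a full-dimensional simplicial cone with a face parallel to~$L$. Shifting by~$s$ preserves this identity. Since $S^L_\lattice$ is a valuation (Proposition~\ref{prop:valuationSL}) that vanishes on polyhedra containing a line by property~(i), we get
\begin{equation*}
  S^L_\lattice(s+\c)(\xi) = \sum_i \epsilon_i\, S^L_\lattice(s+\c_i)(\xi).
\end{equation*}
Hence it suffices to prove the claim for each $\c_i$ separately, and then to take for the $w_j$ the union of the edge generators over all $\c_i$ (each of which can be chosen in $\lattice$ since the Brion--Vergne decomposition is carried out in the rational category).

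Next, I would reduce to the case where $\c$ is simplicial with edge generators $v_1,\dots,v_d \in \lattice$ and $L = L_I$ for some $I \subseteq \{1,\dots,d\}$. In this situation the product formula~\eqref{eq:SLI} gives
\begin{equation*}
  S^L_\lattice(s+\c)(\xi) = S_{\lattice_{I^c}}(s_{I^c} + \c_{I^c})(\xi_{I^c}) \cdot I_{\lattice \cap L_I}(s_I + \c_I)(\xi_I).
\end{equation*}
By the explicit integral formula~\eqref{eq:I}, the second factor equals $\e^{\langle \xi_I,s_I\rangle}$ times a constant divided by $\prod_{i\in I}\langle \xi, v_i\rangle$, so $\prod_{i\in I}\langle \xi,v_i\rangle\, I_{\lattice\cap L_I}(s_I+\c_I)(\xi_I)$ is holomorphic near $\xi=0$. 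By Lemma~\ref{lemma:prodvj-S} applied inside $V/L_I \cong L_{I^c}$ to the simplicial cone $\c_{I^c}$ with lattice edges $v_i$, $i \in I^c$, the function $\prod_{i\in I^c}\langle \xi_{I^c},v_i\rangle\, S_{\lattice_{I^c}}(s_{I^c}+\c_{I^c})(\xi_{I^c})$ is holomorphic near $\xi=0$; using $v_i \in L_{I^c}$ for $i\in I^c$ we have $\langle \xi_{I^c}, v_i\rangle = \langle \xi, v_i\rangle$. Multiplying the two cleared expressions, we conclude that
\begin{equation*}
  \prod_{i=1}^{d}\langle \xi, v_i\rangle\, S^L_\lattice(s+\c)(\xi)
\end{equation*}
is holomorphic near $\xi=0$, which gives the desired product of linear forms in lattice vectors.

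The main (mild) obstacle is the bookkeeping in the first step: ensuring that, after the Brion--Vergne decomposition, the edge generators of the resulting simplicial cones are (or can be rescaled to be) lattice vectors, so that the $w_j$ live in~$\lattice$ as required. This is immediate from the rationality of~$\c$ and the fact that the decomposition is constructed within rational geometry, but it is worth stating explicitly. No other subtlety is needed, since the meromorphy and the form of the denominator follow directly from~\eqref{eq:I}, \eqref{eq:Sbox}, and the already-established Lemma~\ref{lemma:prodvj-S}.
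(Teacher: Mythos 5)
Your proof is correct and follows essentially the same route as the paper: apply the Brion--Vergne decomposition (Proposition~\ref{prop:brion-vergne-decomposition}) together with the valuation property to reduce to a simplicial cone with a face parallel to~$L$, then read off the pole structure from the product formula~\eqref{eq:SLI} via \eqref{eq:I} and \eqref{eq:Sbox}. Your extra bookkeeping about the lattice vectors and the possibly new edges introduced by the decomposition matches the paper's own remark to that effect.
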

\begin{proof}
By Proposition \ref{prop:brion-vergne-decomposition} and the valuation property,
we have  $$ S^L_\lattice(s+\c)(\xi)= \sum_i\epsilon_i S^L_\lattice(s+\c_i)(\xi),$$ where each $\c_i$ is simplicial with a face parallel to $L$.
 However,  this process  may introduce new edges.
Finally, when $\c$ is a simplicial cone with a face parallel to $L$,
the result follows from   Formulas  \eqref{eq:SLI} and \eqref{eq:Sbox}.
\end{proof}

\begin{figure}\label{figure1}
\begin{center}
\includegraphics[width=5cm]{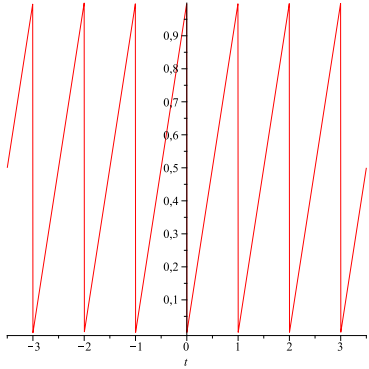} \\
\caption{The fractional part  $\{t\}$ }\label{b1}
 \end{center}
\end{figure}
Let us give some examples when $V$ is two-dimensional.
\begin{example}[positive quadrant]\label{ex:standard-dim2}
Let $V=\R^2$, $\lattice=\Z^2$, and let $\c$ be the positive quadrant.
For $s=(s_1,s_2)$ and $\xi=(\xi_1,\xi_2)$,  $ S^L_\Z(s+\c )(\xi)$ is given by
\begin{equation}
\begin{aligned}
   &{\e^{s_1\xi_1+s_2\xi_2}}{\frac {{\e^{ \{ -s_1 \} \xi_1}}{\e^{ \{ -s_2 \} \xi_2}}}{ \left( 1-{\e^{\xi_1}} \right)
     \left( 1-{\e^{\xi_2}} \right) }},  &&\mbox {  if } L=\{0\}, \\
 & {\e^{s_1\xi_1+s_2\xi_2}} \left(\frac{\e^{ \{-( s_1+s_2) \} \xi_1}}{1-\e^{\xi_1}}-\frac{\e^{ \{ -(s_1+s_2) \} \xi_2}}{1-\e^{\xi_2}}\right)
\frac{1}{\xi_1-\xi_2},  &&\mbox {  if } L=\R(1,-1),\\
&{\e^{s_1\xi_1+s_2\xi_2}}
\left(\frac{\e^{\{s_2-s_1\}\xi_1}}{1-\e^{\xi_1}}-\frac{\e^{-\{s_2-s_1\}\xi_2}}{1-\e^{-\xi_2}}\right)\frac{-1}{\xi_1+\xi_2},
&&\mbox {  if } L=\R(1,1).
\end{aligned}
\label{eq:standard-dim2}
\end{equation}
The first formula just follows from \eqref{eq:Sbox}.
The second and the third formula follow from Brion--Vergne decompositions and
the product formula~\eqref{eq:SLI}.
The decomposition used for the third formula, for $L=\R(1,1)$, is
$$[\c]=[\c_1]-[\c_2]- [\{x_1\geq 0\}],$$ where $\c_1=\{x_2\geq 0,x_1-x_2\geq 0\}$ and
$\c_2= \{x_1 \geq 0,x_1-x_2\geq 0\}$, as depicted in
Figure~\ref{fig:brion-vergne-2d} (top).
The decomposition is not unique.    We can compute  $S^L(s+\c)$
using the other Brion--Vergne decomposition, depicted in Figure~\ref{fig:brion-vergne-2d} (bottom),
 $$[\c]=-[\c'_1]+[\c'_2]+ [\{x_2\geq 0\}],$$ where $\c'_1=\{x_2\geq 0,x_1-x_2\leq 0\}$ and
$\c'_2= \{x_1 \geq 0,x_1-x_2\leq 0\}$.
Then  we obtain
an expression of $S^L(s+\c)$ in terms of $\{s_1-s_2\}$ instead of $\{s_2-s_1\}$.
\end{example} 

\subsection{Intermediate sum $S^L_\lattice(s+\c)(\xi)$ as a function of  the pair $(s,\xi)$. Step-polynomials and quasi-polynomials on a rational space.}
In this section, $\c$ is a rational cone of full dimension $d$.

\subsubsection{The function $\retroS^L_\lattice(s,\c)(\xi)$}
 We  study  the properties of  $S^L_\lattice(s+\c)(\xi)$
considered as a function of the two variables   $s\in V$, $\xi\in V^*$.
Actually, as we will see, these properties are more striking and useful when read on the
function
\begin{equation}\label{eq:functionM}
\retroS^L_\lattice(s,\c)(\xi)= \e^{-\langle \xi,s\rangle}S^L_\lattice(s+\c)(\xi).
\end{equation}
The following lemma is immediate.
\begin{lemma}\label{lemma:s-shift}
Let $\p$ be a semi-rational polyhedron. Let
$s\in V$, and let
 $ \retroS^L_\lattice(s,\p)(\xi)=\e^{-\langle \xi,s\rangle} S^L_\lattice(s+\p )(\xi)$.
Then, for $v \in \lattice+L$, we have
$\retroS^L_\lattice(s+v,\p )(\xi)= \retroS^L_\lattice(s,\p )(\xi)$.
\end{lemma}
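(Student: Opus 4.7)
The plan is to deduce this immediately from Lemma~\ref{lemma:lattice-shift}, which is the analogous shift property for the unshifted generating function $S^L_\lattice$. Concretely, I would unfold the definition of $\retroS^L_\lattice$ on both sides and track how the exponential prefactor conspires with the translate of the polyhedron.

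First I would write
\[
\retroS^L_\lattice(s+v,\p)(\xi) = \e^{-\langle\xi,s+v\rangle}\, S^L_\lattice((s+v)+\p)(\xi)
 = \e^{-\langle\xi,s+v\rangle}\, S^L_\lattice(v+(s+\p))(\xi).
\]
Since $v\in\lattice+L$, Lemma~\ref{lemma:lattice-shift} applies to the semi-rational polyhedron $s+\p$ with shift vector $v$, yielding
\[
S^L_\lattice(v+(s+\p))(\xi) = \e^{\langle\xi,v\rangle}\, S^L_\lattice(s+\p)(\xi).
\]
Substituting back, the factors $\e^{-\langle\xi,v\rangle}$ and $\e^{\langle\xi,v\rangle}$ cancel and leave $\e^{-\langle\xi,s\rangle}S^L_\lattice(s+\p)(\xi) = \retroS^L_\lattice(s,\p)(\xi)$, which is the claim.

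There is no real obstacle here; the entire content of the statement is that the exponential factor in the definition~\eqref{eq:functionM} is precisely what is needed to absorb the character $\e^{\langle\xi,v\rangle}$ appearing in Lemma~\ref{lemma:lattice-shift}. The only point worth emphasizing in the write-up is why Lemma~\ref{lemma:lattice-shift} is applicable to the shifted polyhedron $s+\p$: it is semi-rational whenever $\p$ is, and the hypothesis $v\in\lattice+L$ is exactly the one required by that lemma. The paper's characterization of this statement as ``immediate'' is accurate, and a one-line display following the above calculation is all that needs to be written.
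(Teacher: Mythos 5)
Your proof is correct and is precisely the derivation the paper intends when it calls the lemma ``immediate'': unfolding the definition of $\retroS^L_\lattice$ and cancelling the character $\e^{\langle\xi,v\rangle}$ supplied by Lemma~\ref{lemma:lattice-shift} applied to the semi-rational polyhedron $s+\p$. Nothing further is needed.
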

Thus  the function $s \mapsto \retroS^L_\lattice(s,\p)(\xi)$ can be considered as a function on $V/L$
which is periodic with respect to the projected lattice $\lattice_{V/L}$.
We will often drop the subscript $\lattice$.
\begin{example}[Continuation of Example \ref{ex:standard-dim2}] \label{ex:standard-dim2-retroSL} 
$\retroS^L_\Z(s,\c )(\xi)$ is given by:
\begin{subequations}
\begin{align}
  & {\frac {{\e^{ \{ -s_1 \} \xi_1}}{\e^{ \{ -s_2 \} \xi_2}}}{ \left( 1-{\e^{\xi_1}} \right)
      \left( 1-{\e^{\xi_2}} \right) }},  && \mbox {  if } L=\{0\}, \\
  & \left(\frac{\e^{ \{-( s_1+s_2) \} \xi_1}}{1-\e^{\xi_1}}-\frac{\e^{ \{ -(s_1+s_2) \} \xi_2}}{1-\e^{\xi_2}}\right)
  \frac{1}{\xi_1-\xi_2},  && \mbox {  if } L=\R(1,-1), \\
  & \left(\frac{\e^{\{s_2-s_1\}\xi_1}}{1-\e^{\xi_1}}-\frac{\e^{-\{s_2-s_1\}\xi_2}}{1-\e^{-\xi_2}}\right)\frac{-1}{\xi_1+\xi_2},
  && \mbox {  if } L=\R(1,1).\label{eq:standard-dim2-ML}
  \end{align}
\end{subequations}
\end{example}

\subsubsection{Step-polynomials and quasi-polynomials on $V$}

A crucial role in our study is played by the individual homogeneous components
$S^L(s+\c )_{[m]}(\xi)$ and $ \retroS^L(s,\c )_{[m]}(\xi)$. A pleasant feature
is  that, when $\c$ is fixed, the homogeneous component of $\xi$-degree $m$   can be viewed as a
function of $s\in V$ with values in a finite-dimensional vector space, namely
the space  of rational functions of homogeneous $\xi$-degree $m$ which can be
written in  the form $\frac{P(\xi)}{\prod_j \langle \xi,w_j\rangle}$, where
the family of vectors $w_j$ is given by Lemma \ref{lemma:prodwj-SL}. 
\tgreen{Changed from `total degree' to `homogeneous degree' in above, to match 3polys.}

We introduce  an algebra of functions on $V\times V^*$ in order to
describe these homogeneous components.
Let us start with an example.
\begin{example}[Continuation of Example \ref{ex:standard-dim2}]\label{ex:standard-dim2-homogeneous}
Let  $L=\R(1,1)$. We expand \eqref{eq:standard-dim2-ML}, using the Bernoulli polynomials, defined by
\begin{equation}\label{eq:bernoulli-generating-function}
  \e^{tz}\frac{z}{\e^z-1}=\sum_{n=0}^\infty \frac{B_n(t)}{n!}z^n,
\end{equation}
obtaining
\allowdisplaybreaks
\begin{align*}
  \retroS^L(s,\c )_{[-2]}(\xi)&=\frac {1}{\xi_1\xi_2}, \\
   \retroS^L(s,\c )_{[-1]}(\xi) &= 0,\\
   \retroS^L(s,\c )_{[0]}(\xi)  &= \frac{B_2(\{s_2-s_1\})}{2}=
   \frac{\{s_2-s_1\}^2-\{s_2-s_1\}+\frac{1}{6}}{2},\\
    \retroS^L(s,\c )_{[1]}(\xi)& = \frac{B_3(\{s_2-s_1\})}{3!}(\xi_1-\xi_2)\\
     &= \frac{
       \{s_2-s_1\}^3-\frac{3}{2}\{s_2-s_1\}^2+\frac{1}{2}\{s_2-s_1\}}{6}(\xi_1-\xi_2).\\
     \intertext{From these formulas, we obtain}
S^L(s+\c )_{[-2]} (\xi)&= \frac {1}{\xi_1\xi_2}, \\
 S^L(s+\c )_{[-1]} (\xi)&= \frac {s_1 \xi_1 +s_2\xi_2}{\xi_1\xi_2}, \\
 S^L(s+\c )_{[0]}  (\xi)&=  \frac{(s_1 \xi_1 +s_2\xi_2)^2
  +\xi_1 \xi_2 ( \{s_2-s_1\}^2-\{s_2-s_1\}+\frac{1}{6})
   }{2\xi_1\xi_2}.
\end{align*}
\end{example}
We observe that  the numerators above
are written as   polynomial functions of $s_1,s_2, \{s_2-s_1\},\xi_1,\xi_2$.
 We next describe the general case.

Let $V^*_\Q=\lattice^*\otimes \Q$ be the set of rational elements of $V^*$.
 \begin{definition}\label{def:step-poly-V}
 $\polypp(V)$ is the algebra of functions on $V$ generated by the functions
 $s\mapsto \{\langle\lambda,s\rangle\}$, where $\lambda\in V^*_\Q$.
 An element of $\polypp(V)$  is called a (rational) \emph{step-polynomial} on $V$.
 \end{definition}
 \begin{remark}
   Note that there are many relations between these generators. For example,
   if $V = \R$, consider for $\lambda\in\Q$ the function $f_\lambda(s) = 1 - (\{\lambda s\} +
   \{-\lambda s\})$, which is $1$ if $\lambda s$ is an integer and $0$ otherwise. Then we have
   the polynomial relation $f_1(s) = f_2(s) f_3(s)$.
 \end{remark}

 Since the generators $s\mapsto \{\langle\lambda,s\rangle\}$ are bounded
 functions, it follows that a step-polynomial is a bounded function on $V$.

  The space $\polypp(V)$ has a natural filtration,
 where    $\polypp_{[\leq k]}(V)$ is  the subspace generated by products of at most $k$ functions
 $\{\langle \lambda,s\rangle\}$.

 For $\eta\in\lattice^*$,
the function $s\mapsto \{\langle \eta,s\rangle\}$ is  $\lattice$-periodic.
For a given step-polynomial $f(s)$, let $q\in \N$
 be such that $q\lambda\in \lattice^*$ for all the $\lambda$'s involved in an expression of
 $f(s)$ (such an expression is not unique).  Then  $f$  is $q\lattice$-periodic.
 
 Next, we consider the algebra of functions on $V$  generated by $\polypp(V)$ and  $\CP(V)$,
where $\CP(V)$ is the algebra of polynomial functions on $V$.
It is clear that this algebra is the tensor product
$\polypp(V) \otimes\CP(V) $. We denote it by $\polypp\CP(V)$.
\begin{definition}\label{def:quasi-polynomial-V}
 The elements of $\polypp\CP(V)=\polypp(V) \otimes\CP(V)$ are called \emph{quasi-polynomials} on $V$.
\end{definition}
Now, let $\Psi$ be a finite subset of $V_\Q^*$.  
There corresponds a subalgebra of quasi-polynomials on $V$.
\begin{definition}\label{def:polypp}
  \begin{enumerate}[\rm(i)]
  \item $\polypp^\Psi(V)$ is the algebra of functions on $V$ generated by the
    functions $s\mapsto\{\langle \eta,s\rangle\}$, with $\eta\in \Psi$.
  \item $\polypp^\Psi_{[\leq k]}(V)$ is  the subspace of $\polypp^\Psi(V)$ generated by products of at most $k$ functions
    $\{\langle \eta,s\rangle\}$, with $\eta\in\Psi$.
  \item 
    $\polypp\CP^{\Psi}(V)$ is the algebra of functions on $V$ generated by
    $\polypp^{\Psi}(V)$ and $\CP(V)$.
  \end{enumerate}
\end{definition}
\begin{figure}
\begin{center}
\includegraphics[width=8cm]{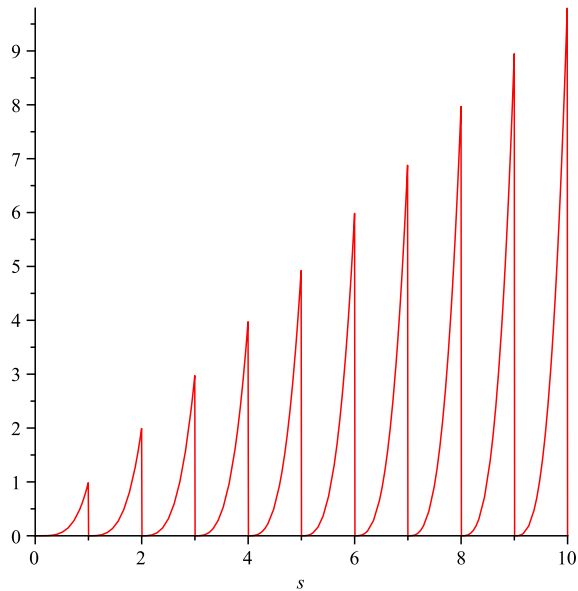} \\
 \caption{The function $s\mapsto \{s\}^3 s$}\label{figure2}
 \end{center}
\end{figure}
The quasi-polynomials in $\polypp\CP^\Psi(V)$ are piecewise polynomial, in a sense which we describe now.
\begin{definition}\label{def:Psi-alcove}
 Let $\Psi$ be a finite subset of $V_\Q^*$.  
 We consider  the  hyperplanes in $V$ defined by the equations
 $$
 \langle \eta ,x\rangle= n \quad \mbox{  for  } \eta\in \Psi  \mbox{ and   }   n\in \Z.
 $$
 A connected component of the complement of the union of these hyperplanes
 in~$V$ is called a \emph{$\Psi$-alcove}.
 \end{definition}
Thus, an alcove is the interior of a polyhedron whose faces are some of  the hyperplanes above.
If $\Psi$  generates $V^*$, all alcoves are bounded. Otherwise, they are  unbounded.
If $s$ is any element of $V$, and $v\in V$ is a fixed element such that
$\langle\eta,v\rangle \neq 0 $ for $\eta\in \Psi$,
then the  curve $s+tv$  is contained  in a $\Psi$-alcove for small $t<0$.

If $\eta\in \Psi$, the restriction to any $\Psi$-alcove
of the function $s\mapsto \{\langle \eta ,s\rangle\}$ is affine. Therefore the restriction  of
a quasi-polynomial $f(s)\in \polypp_{[\leq k]}^{\Psi}(V) \otimes\CP_{[r]}(V)$ to an alcove
is a polynomial in~$s$ of degree $\leq k+r$.  This motivates the following
definition.
\begin{definition}\label{def:quasipoly-filtration}
  \begin{enumerate}[\rm(i)]
  \item A function $f(s)\in \polypp_{[\leq k]}^{\Psi}(V) \otimes\CP_{[r]}(V)$
    is said to be of \emph{polynomial
      degree} $r$ and of \emph{local degree} (at most) $k+r$.
  \item 
    We define $\polypp\CP^{\Psi}_{[\leq q]}(V)$  to be the subspace of quasi-polynomials of
    local degree at most  $q$.
  \end{enumerate}
\end{definition}
In Figure~\ref{figure2}, we draw the graph of a quasi-polynomial function on
$\R$ (with $\Lambda=\Z$ and  $\Psi=\{1\}$) of local degree~$4$.
\tgreen{Changed above from `bidegree' (second meaning) to `local degree'.}

\subsubsection{Properties of homogeneous components of generating functions of shifted cones}
We start with the case $L=\{0\}$. Given a cone $\c\subset V$,
we define a subalgebra of step-polynomials associated
with $\c$.
The fundamental fact here is the existence of a decomposition of the indicator function of $\c$ as a signed sum
\begin{equation}\label{eq:decomp-unimodular}
 [\c]\equiv \sum_{\u} \epsilon_{\u} [\u],
\end{equation}
where the cones $\u$ are unimodular, and the congruence is modulo
the space spanned by indicator functions of cones which contain  lines. If $\c$ is full-dimensional, we can assume that the cones $\u$ are also full-dimensional,  see \cite{barvinokzurichbook} for instance.
By the valuation property, we have
$$
S(s+\c )(\xi)=\sum_{\u} \epsilon_{\u} S(s+\u )(\xi).
$$
For each unimodular cone $\u$ in \eqref{eq:decomp-unimodular},
let $v_j^\u\in\Lambda$, $1\leq j\leq d $, be the primitive generators of the cone $\u$, and let
 $\eta_j^{\u}\in\Lambda^*$, $1\leq j\leq d $, be the dual basis.
 \begin{definition}\label{def:Psi}
We denote by  $\Psi_\c \subset \lattice^*$  the set of all $\eta_j^{\u}$, for $j=1,\ldots,d$,
 where $\u$ runs over the set  of unimodular cones  entering in the  decomposition~\eqref{eq:decomp-unimodular} of $[\c]$.
\end{definition}
$\Psi_\c$ depends of the choice of  the decomposition, but we do not record it in the notation, for brevity.

We can now state the important bidegree properties of the homogeneous components of the functions $S(s+\c )(\xi)$
 and $ \retroS(s,\c)(\xi)$.
Here ``bidegree'' refers to the interaction between the (local) degree in~$s$ and the
homogeneous degree in~$\xi$.  
\tgreen{Added the two previous sentences to explain what `bidegree' is
  supposed to mean.  Updated 2018-08-31 regarding Michele's comment.}
\begin{theorem}\label{prop:homogeneous-M}
Let $m\in \Z$.
\begin{enumerate}[\rm (i)]
\item  The function $(s,\xi)\mapsto \retroS(s,\c)_{[m]}(\xi)$
  belongs to the space $$\polypp_{[\leq m+d]}^{\Psi_\c}(V)\otimes
  \CR_{[m]}(V^*).$$
\item  The function $(s,\xi)\mapsto S(s+\c)_{[m]}(\xi)$ belongs to
  the space $$\polypp\CP^{\Psi_\c}_{[\leq m+d]}(V)\otimes \CR_{[m]}(V^*).$$
  More precisely,
$$
S(s+\c)_{[m]}(\xi)=\sum_{r=0}^{m+d} \frac{\langle\xi,s\rangle^r}{r!}
\retroS(s,\c)_{[m-r]}(\xi).
$$
\item The homogeneous component in $\xi$ of lowest degree has degree $m = -d$ and does not depend on $s$. It is given by the integral
$$
S(s+\c)_{[-d]}(\xi)=\retroS(s,\c)_{[-d]}(\xi)= I(\c)(\xi).
$$
\end{enumerate}
\end{theorem}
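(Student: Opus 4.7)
\medskip
\noindent\textbf{Proof proposal.}
The plan is to prove all three claims simultaneously by reducing to the case of a unimodular cone via the signed decomposition \eqref{eq:decomp-unimodular} and then expanding the resulting explicit product formula in Bernoulli polynomials.

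First I would fix a Barvinok-style signed decomposition $[\c]\equiv\sum_{\u}\epsilon_{\u}[\u]$ into full-dimensional unimodular cones $\u$, modulo cones containing lines. Since both $\p\mapsto S(s+\p)(\xi)$ and hence $\p\mapsto \retroS(s,\p)(\xi)$ are $\CM_\ell(V^*)$-valued valuations that vanish on cones with lines (Proposition \ref{prop:valuationSL}(i)), the valuation property gives
\begin{equation*}
\retroS(s,\c)(\xi)=\sum_{\u}\epsilon_{\u}\retroS(s,\u)(\xi),\qquad S(s+\c)(\xi)=\sum_{\u}\epsilon_{\u}S(s+\u)(\xi).
\end{equation*}
Thus it suffices to verify (i)--(iii) for a single unimodular $\u$ with primitive edge generators $v_1^{\u},\dots,v_d^{\u}$ and dual basis $\eta_1^{\u},\dots,\eta_d^{\u}\in\Psi_\c$: the bound on local degree and the homogeneous $\xi$-degree are both preserved by the signed sum, and in part~(iii) one checks that the integral $I$ is itself a valuation that vanishes on cones with lines.

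For a unimodular cone $\u$, the semi-open cell $\b^{\u}=\sum_j[0,1[v_j^{\u}$ contains a single lattice point. Writing $s=\sum_j\langle\eta_j^{\u},s\rangle v_j^{\u}$, the unique lattice point in $s+\b^{\u}$ is $s+\sum_j\{-\langle\eta_j^{\u},s\rangle\}v_j^{\u}$. Combined with \eqref{eq:Sbox}, this yields
\begin{equation*}
\retroS(s,\u)(\xi)=\prod_{j=1}^{d}\frac{\e^{\{-\langle\eta_j^{\u},s\rangle\}\langle\xi,v_j^{\u}\rangle}}{1-\e^{\langle\xi,v_j^{\u}\rangle}}.
\end{equation*}
Using the Bernoulli generating function~\eqref{eq:bernoulli-generating-function}, each factor expands as $-\sum_{n\geq 0}\frac{B_n(\{-\langle\eta_j^{\u},s\rangle\})}{n!}\langle\xi,v_j^{\u}\rangle^{n-1}$. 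Collecting the terms of total $\xi$-degree $m$ amounts to summing over tuples $(n_1,\dots,n_d)$ with $\sum n_j=m+d$; in particular the sum is empty unless $m\geq -d$. Since $B_{n_j}$ has degree $n_j$ and its argument is $\{\langle\eta_j^{\u},\cdot\rangle\}$ with $\eta_j^{\u}\in\Psi_\c$, the coefficient is a step-polynomial in $\polypp^{\Psi_\c}_{[\leq m+d]}(V)$, while the $\xi$-part is a rational function of $\xi$-degree $m$ with denominator $\prod_j\langle\xi,v_j^{\u}\rangle^{a_j}$. This proves~(i) for $\u$, and hence for $\c$.

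For~(ii), I would use the trivial identity $S(s+\c)(\xi)=\e^{\langle\xi,s\rangle}\retroS(s,\c)(\xi)$. Expanding the exponential and collecting the $\xi$-degree $m$ terms gives
\begin{equation*}
S(s+\c)_{[m]}(\xi)=\sum_{r\geq 0}\frac{\langle\xi,s\rangle^{r}}{r!}\retroS(s,\c)_{[m-r]}(\xi),
\end{equation*}
the sum being finite because $\retroS(s,\c)_{[m-r]}=0$ for $r>m+d$. Each summand lies in $\CP_{[r]}(V)\otimes\polypp^{\Psi_\c}_{[\leq m-r+d]}(V)\otimes\CR_{[m]}(V^*)\subseteq\polypp\CP^{\Psi_\c}_{[\leq m+d]}(V)\otimes\CR_{[m]}(V^*)$, giving the desired bidegree bound. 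For~(iii), take $m=-d$: only $r=0$ contributes, so $S(s+\c)_{[-d]}(\xi)=\retroS(s,\c)_{[-d]}(\xi)$, and the explicit formula above gives, for each unimodular $\u$, $\retroS(s,\u)_{[-d]}(\xi)=\prod_{j}\bigl(-\langle\xi,v_j^{\u}\rangle^{-1}\bigr)=I(\u)(\xi)$, independent of $s$. Summing with signs $\epsilon_{\u}$ and invoking the valuation property of the integral (which kills cones with lines) yields $\retroS(s,\c)_{[-d]}(\xi)=I(\c)(\xi)$, completing~(iii).

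The main technical point is really just careful bookkeeping of the local degree bound $k+r\leq m+d$ across the Bernoulli expansion and the multiplication by $\e^{\langle\xi,s\rangle}$; the vanishing of everything on cones with lines makes the reduction to unimodular $\u$ painless. No continuity or Fourier-analytic input is needed here.
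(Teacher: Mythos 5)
Your proposal is correct and follows essentially the same route as the paper: decompose $[\c]$ into full-dimensional unimodular cones modulo cones with lines, compute $\retroS(s,\u)(\xi)$ explicitly as a product over the edges, read off the bidegree from the expansion (the paper expands the exponential prefactor as a whole rather than each factor in Bernoulli polynomials, but this is only a bookkeeping difference), and deduce (ii) by multiplying by $\e^{\langle\xi,s\rangle}$. The only substantive divergence is that you prove (iii) directly from the lowest-order term $\prod_j\bigl(-\langle\xi,v_j^{\u}\rangle^{-1}\bigr)=I(\u)(\xi)$ together with the valuation property of $I$, whereas the paper cites Lemma~16 of \cite{so-called-paper-1}; your self-contained argument is valid.
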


To rephrase (ii), we can say that the numerator of the homogeneous component $S(s+\c)_{[m]}(\xi)$ is a quasi-polynomial
function of $s$ (with coefficients polynomials in $\xi$), and the local degree
in~$s$ of this quasi-polynomial (and so its complexity)  grows with the
homogeneity degree~$m$ in $\xi$. 
\tgreen{Changed from bidegree to local degree in previous sentence.}

\begin{proof}
Let $\u$ be one of the unimodular cones in the decomposition of~$[\c]$.
 We write $\xi=\sum_j \xi_j \eta_j^\u$. Then
$S(s+\u)(\xi)$ is directly computed by summing a multiple geometric series (cf.\ Example \ref{ex:dimone}),
hence
\begin{align*}
\retroS(s,\u)(\xi)&=\exp \Bigl({\sum_j \{-\langle
    \eta_j^u,s\rangle\}\xi_j}\Bigr) \prod_{j=1}^d
\frac{1}{1-\e^{\xi_j}},\\
\retroS(s,\u)_{[m]}(\xi)&= \sum_{k=0}^{d+m}\frac{(\sum_j \{-\langle
  \eta_j^u,s\rangle\}\xi_j )^k}{k!}  \biggl(\prod_{j=1}^d
\frac{1}{1-\e^{\xi_j}}\biggr)_{[m-k]}.
\end{align*}
In this formula, it is clear that the $k$-th term  belongs to
$\polypp_{[\leq k]}^{\Psi_\c}(V)\otimes \CR_{[m]}(V^*)$. As $k\leq m+d$, we obtain (i).
The homogeneous components $S(s+\c)_{[m]}(\xi)$
are immediately computed out of those of $\retroS(s,\c)(\xi)$, hence (ii).
Part (iii) was proved in \cite{so-called-paper-1}, Lemma~16.
\end{proof}

Now let $L\subseteq V$ be any rational subspace.
In order to obtain a similar result for the intermediate generating function $S^L(s+\c)(\xi)$,
we follow the steps of the proof of Lemma \ref{lemma:prodwj-SL}.
First, we decompose $[\c]$ into a signed sum of simplicial cones with a face parallel to $L$.
For each of these cones, we decompose the projected cone in $V/L$
 into a signed sum of  cones which are unimodular with respect to the projected lattice
$\lattice_{V/L}$. We thus have a collection of unimodular cones $\u\subset V/L$.
When $(V/L)^*$ is identified with $L^\perp \subset V^*$,
the dual lattice
$(\lattice_{V/L})^*$ is identified with $\lattice^*\cap L^\perp$.
For each  $\u$, we let $v_j^\u\in\Lambda_{V/L}$ be primitive edge generators of $\u$ and 
consider the dual basis $\eta_j^\u\in \lattice^*\cap L^\perp$.
\begin{definition}\label{def:PsiL}
We denote by $\Psi_\c^L \subset\lattice^*\cap L^\perp$ the set  of all $\eta_j^\u$.
\end{definition}
Then the functions in $\polypp^{\Psi_\c^L}(V)$ are   functions on $V/L$ and  are  $\lattice_{V/L}$-periodic.
 Using the product formula \eqref{eq:SLI},
    the proof of the following theorem  is  similar to the case $L=\{0\}$ (Theorem~\ref{prop:homogeneous-M}).
\begin{theorem}\label{prop:homogeneous-ML}
Let $m\in \Z$.
\begin{enumerate}[\rm (i)]
\item The function $(s,\xi)\mapsto \retroS^L(s,\c)_{[m]}(\xi)$
  belongs to the space $$\polypp_{[\leq m+d]}^{\Psi_\c^L}(V)\otimes
  \CR_{[m]}(V^*).$$

\item The function $(s,\xi)\mapsto S^L(s+\c )_{[m]}(\xi)$
  belongs to the space
  $$\polypp\CP_{[\leq m+d]}^{\Psi_\c^L}(V)\otimes \CR_{[m]}(V^*).$$ 
  More
  precisely
  \begin{equation}\label{eq:homogeneous-ML}
    S^L(s+\c )_{[m]}(\xi)=\sum_{r=0}^{m+d} \frac{\langle\xi,s\rangle^r}{r!}  \retroS^L(s,\c )_{[m-r]}(\xi).
  \end{equation}
\item The homogeneous component in $\xi$ of lowest degree has degree $m=-d$ and does not depend on
  $s$. It is given by the integral 
  $$
  S^L(s+\c)_{[-d]}(\xi)=\retroS^L(s,\c)_{[-d]}(\xi)= I(\c)(\xi).
  $$
\end{enumerate}
\end{theorem}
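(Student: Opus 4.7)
The plan is to imitate the proof of Theorem~\ref{prop:homogeneous-M} but work in two stages, first reducing to a ``product situation'' via the Brion--Vergne decomposition, then exploiting the product formula~\eqref{eq:SLI} together with the already-proven case $L=\{0\}$ applied in the quotient $V/L$.

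First I would apply Proposition~\ref{prop:brion-vergne-decomposition}, writing $[\c]\equiv \sum_i \epsilon_i[\c_i]$ modulo cones with lines, where each~$\c_i$ is a simplicial full-dimensional cone with a face parallel to~$L$. By the valuation property (together with axiom~(i) of Proposition~\ref{prop:valuationSL}), this gives $S^L(s+\c)(\xi)=\sum_i \epsilon_i\, S^L(s+\c_i)(\xi)$, hence the same relation for $\retroS^L$ and for each homogeneous component. Since the set $\Psi_\c^L$ of Definition~\ref{def:PsiL} is built precisely from the (projected) unimodular cones arising from these $\c_i$, it suffices to establish (i)--(iii) for a single simplicial cone with a face parallel to~$L$.

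Now fix such a simplicial~$\c$ with edge generators $v_1,\dots,v_d$ and index set $I\subseteq\{1,\dots,d\}$ such that $L=L_I$. The product formula~\eqref{eq:SLI} yields, after multiplying by $\e^{-\langle\xi,s\rangle}=\e^{-\langle\xi_I,s_I\rangle}\e^{-\langle\xi_{I^c},s_{I^c}\rangle}$,
\begin{equation*}
\retroS^{L_I}(s,\c)(\xi) = \retroS_{\lattice_{I^c}}(s_{I^c},\c_{I^c})(\xi_{I^c}) \cdot \bigl(\e^{-\langle\xi_I,s_I\rangle}I(s_I+\c_I)(\xi_I)\bigr).
\end{equation*}
By formula~\eqref{eq:I}, the second factor equals $\frac{(-1)^{|I|}\vol(\b_I)}{\prod_{i\in I}\langle\xi_I,v_i\rangle}$, which is \emph{independent of~$s$} and purely homogeneous in~$\xi$ of degree $-|I|$. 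Taking the $m$-th homogeneous component thus collapses the convolution to a single term:
\begin{equation*}
\retroS^{L_I}(s,\c)_{[m]}(\xi) = \retroS_{\lattice_{I^c}}(s_{I^c},\c_{I^c})_{[m+|I|]}(\xi_{I^c}) \cdot \frac{(-1)^{|I|}\vol(\b_I)}{\prod_{i\in I}\langle\xi_I,v_i\rangle}.
\end{equation*}

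Part~(i) now follows by applying Theorem~\ref{prop:homogeneous-M}(i) to $\c_{I^c}$ inside the rational space $V/L_I=L_{I^c}$, which has dimension $d-|I|$: the discrete factor lies in $\polypp_{[\leq (m+|I|)+(d-|I|)]}^{\Psi_{\c_{I^c}}}(L_{I^c})\otimes\CR_{[m+|I|]}((L_{I^c})^*)$, so the step-polynomial degree in $s$ is $\leq m+d$, and multiplication by the $\CR_{[-|I|]}$-factor brings the total $\xi$-degree back to~$m$. The generators $\eta^{\u}_j$ arising from the unimodular decomposition of~$\c_{I^c}$ lift via pullback along $V\to V/L$ to elements of $\lattice^*\cap L^\perp$, giving exactly $\Psi_\c^L$ as in Definition~\ref{def:PsiL}.

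For part~(ii), I would use $S^L(s+\c)(\xi)=\e^{\langle\xi,s\rangle}\retroS^L(s,\c)(\xi)$ and expand $\e^{\langle\xi,s\rangle}=\sum_{r\geq 0}\langle\xi,s\rangle^r/r!$: the $\xi$-degree~$r$ component $\langle\xi,s\rangle^r/r!$ is a polynomial in~$s$ of degree~$r$ with coefficients in $\CR_{[r]}(V^*)$, so convolving with $\retroS^L(s,\c)_{[m-r]}$ (step-poly degree $\leq m-r+d$) lands in $\polypp\CP^{\Psi_\c^L}_{[\leq m+d]}(V)\otimes\CR_{[m]}(V^*)$, which proves equation~\eqref{eq:homogeneous-ML}. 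Since $r$ ranges over $0\leq r\leq m+d$ (beyond this, the $\retroS^L$ factor vanishes by Theorem~\ref{prop:homogeneous-M}(iii) in $V/L$), the sum is finite. Finally, part~(iii) is immediate from the product expression above together with Theorem~\ref{prop:homogeneous-M}(iii) in $V/L$: the lowest $\xi$-degree term occurs when $m+|I|=-(d-|I|)$, i.e.\ $m=-d$, at which point $\retroS_{\lattice_{I^c}}(s_{I^c},\c_{I^c})_{[-(d-|I|)]}=I(\c_{I^c})(\xi_{I^c})$ is $s$-independent, and Fubini identifies the product $I(\c_{I^c})(\xi_{I^c})\cdot I(\c_I)(\xi_I)$ with $I(\c)(\xi)$.

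The main obstacle is bookkeeping: after Brion--Vergne one must verify that the $\Psi_\c^L$ appearing in the statement genuinely collects all the dual-basis vectors $\eta^\u_j$ coming from the unimodular refinements of each projected $(\c_i)_{I^c}$, and that these all lie in $\lattice^*\cap L^\perp$ (which is automatic since they are pulled back from $(V/L)^*$). Everything else is a clean combination of the product formula~\eqref{eq:SLI}, equation~\eqref{eq:I}, and the previously-proven Theorem~\ref{prop:homogeneous-M}.
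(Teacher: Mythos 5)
Your proposal is correct and follows essentially the same route as the paper, which reduces via the Brion--Vergne decomposition to a simplicial cone with a face parallel to~$L$, applies the product formula~\eqref{eq:SLI} to split off an $s$-independent integral factor homogeneous of degree $-|I|$, and then invokes the $L=\{0\}$ case (Theorem~\ref{prop:homogeneous-M}) in the quotient $V/L$. In fact you supply the bookkeeping details (the degree shift by $|I|$, the identification of the lifted generators with $\Psi_\c^L\subset\lattice^*\cap L^\perp$) that the paper's one-line proof leaves implicit.
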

\begin{remark}
    If $L=V$, then $\Psi_\c^L$ is empty, hence  $\polypp^{\Psi_\c^L}(V)$ is just the scalars. Indeed,
    $\e^{-\langle\xi,s\rangle}I(s+\c)(\xi)= I(\c)(\xi)$ does not depend on $s$.
\end{remark}
\begin{remark}
  As we showed in \cite[Theorems 31 
  and 38
  ]{so-called-paper-1} and \cite[Theorems
  24 
  and 28
  ]{so-called-paper-2}, 
  the computation of these functions and their homogeneous components 
  can be made effective, and the bidegree structure, i.e., the interaction of
  the local degree in~$s$ and the 
  homogeneous degree in~$\xi$, takes a key role in 
  extracting the refined asymptotics.  
  \tgreen{Rephrased previous sentence to explain bidegree again; corrected again
    according to Michele's 2014-08-31 comment.}
  We have developed Maple implementation of such
  algorithms, which work with a symbolic vertex~$s$; the resulting formulas
  are naturally valid for any real vector $s\in V$.
\end{remark}

\subsection{One-sided continuity}

The meromorphic functions $M^L(s,\c )(\xi)$ and $S^L(s+\c )(\xi)$ and their homogeneous
components $M^L(s,\c )_{[m]}(\xi)$ and $S^L(s+\c )_{[m]}(\xi)$ 
enjoy some continuity  properties  when $s$ tends to $s_0$ along some directions that we will describe.
Let us look again at the simplest example  (Example~\ref{ex:dimone}). We have
 $$
S_\Z^{\{0\}}(s+\R_{\geq 0})(\xi) =\e^{s\xi}\frac{\e^{\{-s\}\xi}}{1-\e^{\xi}},
$$
and
$$
S_\Z^{\{0\}}(s+\R_{\leq0})(\xi) =\e^{s\xi}\frac{\e^{-\{s\}\xi}}{1-\e^{-\xi}}.
$$
Then observe that as functions of $s$, the first formula is continuous from
the left, while the second formula is continuous from the right. These
directions are the opposite  of the direction of the corresponding cones, and
the result is intuitively clear. For instance,  the set $(s+\R_{\geq 0})\cap \Z$
itself does not change when $s$ is moved slightly to the left; but it does
change if $s$ is moved slightly to the right from an integer.

We state a generalization of this result in higher dimensions.
In order to state the result, we observe that, in Theorem~\ref{prop:homogeneous-ML},
the infinite-dimensional space $\CR_{[m]}(V^*)$
can be replaced by the following finite-dimensional subspace.
\begin{definition}
  Let $(v_j)_{j=1}^N$ be a set of vectors in $V$ such that 
  the function
  $\prod_{j=1}^N
  \langle \xi,v_j\rangle S^L(s+\c)(\xi)$ is holomorphic near $\xi=0$. Then as
  rational functions of $\xi$, both $\retroS^L(s,\c )_{[m]}(\xi)$ and
  $S^L(s+\c )_{[m]}(\xi)$ lie in the finite-dimensional space of functions
  $f(\xi)$ such that $\prod_{j=1}^N \langle \xi,v_j\rangle f(\xi)$ is a
  polynomial of degree $m+N$.  We denote this space by $\frac{1}{\prod_{j=1}^N
    v_j}\CP_{[m+N]}(V^*)$.
\end{definition}
\begin{figure}
  \centering
  \ifpdf
  \input{continuity-3-variant.pdf_t}
   \else
   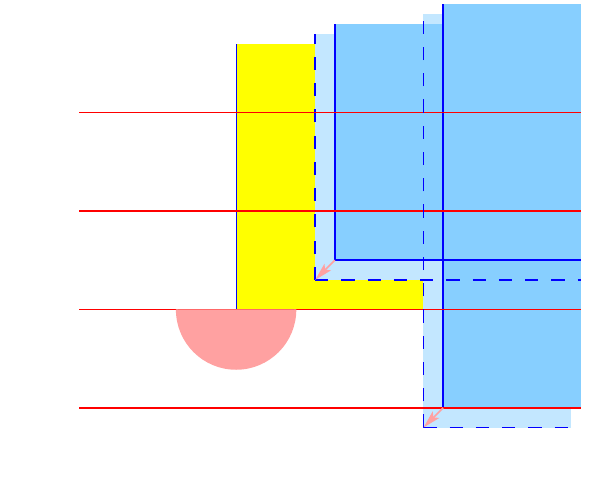
   \fi
  \caption{One-sided continuity of the functions $M^L(s,\c )(\xi)$ and $S^L(s+\c )(\xi)$ and their homogeneous
    components, as functions of the apex~$s$.  
    Discontinuities arise when one of the copies
    of~$L$ in $L+\lattice$ intersects the boundary of the cone~$s+\c$; in this
    case we still have one-sided continuity when we move in
    a direction $v \in L-\c$.
  }
  \label{fig:continuity-3}
\end{figure}
\begin{figure}
  \centering
  \ifpdf
  \input{continuity-1-variant.pdf_t}
   \else
   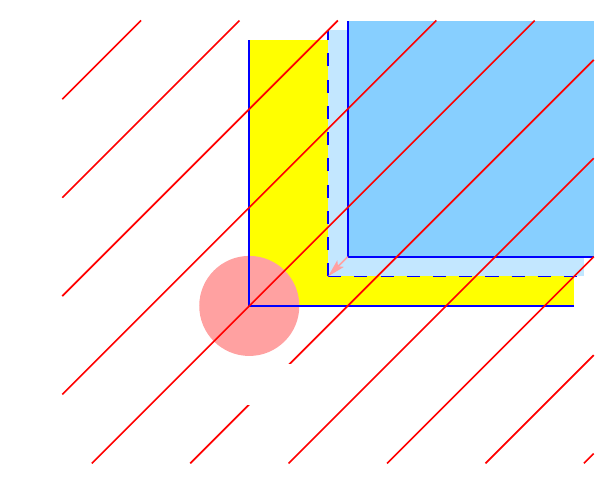
   \fi
  \caption{One-sided continuity of the functions $M^L(s,\c )(\xi)$ and $S^L(s+\c )(\xi)$ and their homogeneous
    components, as functions of the apex~$s$.  In this example, $L-\c = V$,
    and thus the functions actually depend
    continuously on~$s$.}
  \label{fig:continuity}
\end{figure}
\begin{proposition}\label{prop:alcoves}
  Let $(v_j)_{j=1}^N$ be a set of vectors in $V$ such that 
  the function
  $\prod_{j=1}^N
  \langle \xi,v_j\rangle S^L(s+\c)(\xi)$ is holomorphic near $\xi=0$. 
  \begin{enumerate}[\rm (i)]
  \item The restriction of $s\mapsto \retroS^L(s, \c)_{[m]}(\xi)$ and
    $s\mapsto S^L(s+\c)_{[m]}(\xi)$ to a $\Psi_\c^L$-alcove $\a\subset V$ are
    polynomial functions of $s$ with values in the finite-dimensional space
    $\frac{1}{\prod_{j=1}^Nv_j}\CP_{[m+N]}(V^*)$.
  \item Let $s\in V$. For any $v\in L-\c$, we have the following
    one-sided limit (cf.~Figures
    \ref{fig:continuity-3}~and~\ref{fig:continuity}),
 $$
 \lim_{\substack{t\to 0\\ t>0}}\retroS^L(s+tv, \c )_{[m]}(\xi)= \retroS^L(s, \c
 )_{[m]}(\xi).
  $$
\end{enumerate}
\end{proposition}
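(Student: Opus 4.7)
For part~(i), the plan is to apply Theorem~\ref{prop:homogeneous-ML}(i) together with the definition of a $\Psi_\c^L$-alcove.  That theorem places $\retroS^L(\cdot,\c)_{[m]}(\xi)$ in $\polypp_{[\leq m+d]}^{\Psi_\c^L}(V)\otimes\CR_{[m]}(V^*)$, and on a $\Psi_\c^L$-alcove the integer $\lfloor\langle\eta,s\rangle\rfloor$ is constant for each $\eta\in\Psi_\c^L$.  Consequently every generator $s\mapsto\{\langle\eta,s\rangle\}$ restricts to the affine function $\langle\eta,s\rangle-\lfloor\langle\eta,s\rangle\rfloor$, so every element of $\polypp^{\Psi_\c^L}(V)$ restricts to a polynomial in~$s$.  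Lemma~\ref{lemma:prodwj-SL} supplies the finite-dimensional target space.  The assertion for $S^L(s+\c)_{[m]}(\xi)$ then follows from the expansion in Theorem~\ref{prop:homogeneous-ML}(ii).

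For part~(ii) my plan is to reduce to the $L=\{0\}$ case in the quotient $V/L$, where a direct lattice-point argument applies.  Using Proposition~\ref{prop:brion-vergne-decomposition} and the valuation property, write $\retroS^L(s,\c)(\xi) = \sum_i\epsilon_i\,\retroS^L(s,\c_i)(\xi)$ modulo cones with lines, each $\c_i$ simplicial with a face parallel to~$L$.  For such $\c_i$, the product formula~\eqref{eq:SLI} factors $\retroS^L(s,\c_i)(\xi)$ as an $s$-independent integral in the $L$-direction times a discrete-sum factor $\retroS_{\lattice_{V/L}}(\bar s,\bar\c_i)(\bar\xi)$ depending only on the projection $\bar s \in V/L$.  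Since $v\in L-\c$ projects to $\bar v \in -\bar\c$ in $V/L$, the one-sided continuity reduces, piece by piece, to the $L=\{0\}$ statement in $V/L$ from a direction in~$-\bar\c_i$.

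The $L=\{0\}$ case is established by the sharper claim that for any rational full-dimensional cone~$\c$, any $s\in V$, and any $v\in -\c$, there exists $t_0>0$ with $(s+tv+\c)\cap\lattice = (s+\c)\cap\lattice$ for all $t\in(0,t_0)$.  The inclusion $\supseteq$ is immediate from $-tv\in t\c\subseteq\c$.  For the reverse inclusion, any lattice point $x\in (s+tv+\c)\setminus(s+\c)$ must violate some facet inequality $\langle\mu_i,x-s\rangle\geq 0$; as $\langle\mu_i,x\rangle\in\Z$, the violation amount is bounded below in absolute value by the positive distance from $\langle\mu_i,s\rangle$ to the nearest strictly smaller integer, uniformly in~$x$.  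Combined with $\langle\mu_i,x-s-tv\rangle\geq 0$ this forces $t$ to exceed a fixed positive threshold; taking the minimum over facets gives~$t_0$.  Hence $S(s+tv+\c)(\xi)=S(s+\c)(\xi)$ as meromorphic functions, so $\retroS(s+tv,\c)(\xi) = \e^{-t\langle\xi,v\rangle}\retroS(s,\c)(\xi)\to\retroS(s,\c)(\xi)$ as $t\to 0^+$, and taking homogeneous components in~$\xi$ yields the result for each $[m]$.

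The main obstacle is to make the Brion--Vergne reduction compatible with the given~$v$: a generic decomposition need not satisfy $v\in L-\c_i$ for every piece, and then the reduction above would require one-sided continuity in $V/L$ from a direction that is not in $-\bar\c_i$ for some~$i$.  To circumvent this, I would write $v=\ell-c_0$ with $\ell\in L$, $c_0\in\c$, and choose a Brion--Vergne refinement in which every simplicial piece $\c_i$ contains~$c_0$; then $v=\ell-c_0\in L-\c_i$ for every~$i$.  When $c_0$ lies in the interior of~$\c$ this is achieved by a star triangulation at~$c_0$ followed by the face-alignment construction of Theorem~19 of~\cite{so-called-paper-2}; boundary positions of~$c_0$ are reduced to the interior case by a limiting argument that exploits the polynomial structure on alcoves from part~(i).
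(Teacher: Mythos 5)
Part~(i) is correct and is essentially the paper's own (one-line) argument: Theorem~\ref{prop:homogeneous-ML} together with the observation that each generator $s\mapsto\{\langle\eta,s\rangle\}$ restricts to an affine function on a $\Psi_\c^L$-alcove.

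For part~(ii) you take a genuinely different route from the paper, and you have correctly located where it becomes delicate, but your resolution of that difficulty fails. Your core $L=\{0\}$ lemma (that $(s+tv+\c)\cap\lattice=(s+\c)\cap\lattice$ for $v\in-\c$ and $0<t<t_0$) is correct and gives exact equality of the meromorphic functions, so the homogeneous components converge trivially there. The problem is the reduction. You propose a star triangulation at $c_0$ followed by the face-alignment of Theorem~19 of \cite{so-called-paper-2}, claiming every resulting piece $\c_i$ satisfies $v\in L-\c_i$. This is false already in the paper's own two-dimensional example: take $\c$ the positive quadrant, $L=\R(1,1)$, $c_0=(1,2)$ and $v=-c_0\in L-\c$. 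Star triangulation at $c_0$ produces $\cone((1,0),(1,2))$, through whose interior $L$ passes, so any face-aligned decomposition of it contains a piece such as $\c_1=\cone((1,0),(1,1))$; then $L-\c_1=\{(x,y):x\leq y\}$ does not contain $v=(-1,-2)$. Indeed the discrete factor of $S^L(s+\c_1)$ lives on $V/L\cong\R$ with projected cone $\R_{\leq 0}$, and the direction $\bar v=-1$ approaches from precisely the discontinuous side. More generally, a face-aligned decomposition of a cone whose interior meets $L$ necessarily has pieces projecting to opposite sides in $V/L$, and $\bar v=-\bar c_0$ can lie in $-\bar\c_i$ only for the pieces on one side. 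So term-by-term one-sided continuity genuinely fails for your decomposition; the sum is still one-sidedly continuous, but only by cancellation, which your argument does not establish. The fallback for boundary positions of $c_0$ (``a limiting argument that exploits the polynomial structure on alcoves'') is also circular: part~(i) gives polynomiality on open alcoves, but which boundary value a one-sided limit selects is exactly what part~(ii) is about.

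For contrast, the paper avoids the adaptation problem entirely by \emph{not} arguing term by term. It works directly with the defining series of $S^L(s+\c)(\xi)$ on an open set $U\subset V^*_\C$ where it converges, shows $S^L(s+tv+\c)\to S^L(s+\c)$ uniformly on $U$ as $t\to0^+$ (using the same lattice-point observation, but only for the projected cone of the original $\c$ in $V/L$), and then transfers convergence to the homogeneous components at $\xi=0$ by proving that $\prod_j\langle\xi,v_j\rangle S^L(s+\c)(\xi)$ is holomorphic and uniformly bounded near $0$ for $s$ near $s_0$ and invoking Montel's theorem; the Brion--Vergne decomposition and the product formula enter only in this uniform-boundedness step, which is independent of $v$. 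This indirection is necessary because, as the paper's remark after the proposition warns, pointwise convergence of meromorphic functions does not imply convergence of their homogeneous components. To repair your proof you would need either to prove the existence of a decomposition adapted to $v$ (your construction does not provide one), or to replace the term-by-term limit by a global argument of the paper's kind.
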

\begin{proof}
  Part (i) is an immediate consequence of Theorem~\ref{prop:homogeneous-ML}.

For part (ii) we assume that $\c$ is pointed, otherwise there is nothing to prove.
Recall the definition of $S^L(s+\c )(\xi)$.
 Fix $s_0\in V$. There is a non-empty open subset $U\subset V^*_\C$ such that for
$s $ near $s_0$ and $\xi \in U$, the following sum
$$
S^L(s+\c )(\xi)= \sum_{y\in \lattice_{V/L}}
\int_{(s+\c) \cap (y+L)} \e^{\la\xi,x\ra}\,\mathrm dx
$$
converges uniformly to a holomorphic function of $\xi$.
Intuitively, the lemma is based on the  observation that if  $v\in L-\c$ and $t>0$ is small enough,   then
the projections of the shifted cones $s+\c$ and $s+tv+\c$ on $V/L$ have the same lattice points.

Let us look first at the  extreme cases, $L=\{0\}$ or $L=V$.
If $L=\{0\}$, for $v\in L-\c$ and $t>0$ small enough, the shifted cones $s+\c$ and $s+tv+\c$ have the same lattice points, hence
 $S(s+tv+\c )(\xi)= S(s+\c )(\xi)$ for $\xi \in U$. It follows that these meromorphic functions are equal.
 If $L=V$, then $S^L(s+\c)(\xi)=I(s+\c)(\xi)=\e^{\langle \xi,s\rangle }
 I(\c)(\xi)$ depends continuously on the apex~$s$.

Now, let $L$ be arbitrary.
Let  $v\in L-\c$ and $t>0$ small enough.  Then
the projections of the shifted cones $s+\c$ and $s+tv+\c$ on $V/L$ have the same lattice points.
Consider a given $y\in \lattice_{V/L}$. If $y$ does not lie in this projection, then
$\int_{(s+tv +\c) \cap (y+L)} \e^{\la\xi,x\ra}\,\mathrm dx= 0$. Otherwise, it is clear that
the integral  $\int_{(s+tv +\c) \cap (y+L)} \e^{\la\xi,x\ra}\,\mathrm dx$ depends continuously on~$t$. Hence,
 for all $y\in \lattice_{V/L}$, we have
$$
 \lim_{\substack{t\to 0\\ t>0}}\; \int_{(s+tv +\c) \cap (y+L)} \e^{\la\xi,x\ra}\,\mathrm dx=
\int_{(s+\c) \cap (y+L)} \e^{\la\xi,x\ra}\,\mathrm dx,
$$
uniformly for $\xi\in U$.
Therefore $$\lim_{\substack{t\to 0\\ t>0}}S^L(s+tv+ \c )(\xi)= S^L(s+ \c )(\xi),$$
uniformly for $\xi\in U$.
The difficulty is that  $0\notin U$.
To deal with it, it is enough to prove that  there exists a
finite set of vectors $v_j\in V$ and a ball $B\subset  V^*_\C$ of center $0$ intersecting $U$, such that
$\prod_{j=1}^N \langle\xi,v_j\rangle S^L(s+ \c )(\xi)$ is holomorphic
and  uniformly bounded on $B$, for $s$ in neighborhood of a given $s_0$.
By the Montel compactness 
theorem, it will follow that
$$
\lim_{\substack{t\to 0\\ t>0}}\;\prod_{j=1}^N \langle\xi,v_j\rangle S^L(s+tv+ \c )(\xi)= \prod_{j=1}^N \langle\xi,v_j\rangle  S^L(s+ \c )(\xi)
$$
uniformly for $\xi\in B$, therefore the limit will hold also for homogeneous components.

To prove the uniform boundedness property above,
we use the Brion--Vergne decomposition (Proposition~\ref{prop:brion-vergne-decomposition})
of $\c$ as a signed sum of simplicial cones,
each with a face parallel to~$L$, modulo cones with lines. We take  $(v_j)$
 to be the collection of all edge generators for all these cones.
So now we need only prove uniform boundedness for a simplicial cone for which $L$ is a face.
By the product formula, we are reduced to the extreme cases $S(s+\c)$ and $I(s+\c)$.
The latter is continuous with respect to $s$, so  uniform boundedness holds.
For the discrete sum,  uniform boundedness follows from Formula \eqref{eq:Sbox}.
\end{proof}
\begin{remark} Although the result is intuitively clear, a proof is needed because we have an infinite sum.  Indeed, consider the sequence of holomophic functions $f_n(z)=\frac{\e^{nz}-1}{ n z}$. This sequence  converges pointwise to~$0$ for  $\Re z<0$. However, the homogeneous components $\frac{n^k}{(k+1)!} z^k$ do not converge to $0$.
\end{remark}%
\begin{example}
  Let us look again at the positive quadrant from Examples
  \ref{ex:standard-dim2} and \ref{ex:standard-dim2-homogeneous},
  with $L=\R(1,1)$ (see Figure~\ref{fig:continuity}). By means of the Brion--Vergne decomposition of
  the quadrant depicted in 
  Figure~\ref{fig:brion-vergne-2d} (top), we computed a formula for $\retroS^L(s,\c)_{[m]}(\xi)$
  in terms of the Bernoulli  polynomial $B_{m+2}(\{s_2-s_1\})$. Let
  $v_1<0,v_2<0$, so $v\in -\c$.
Then  $\lim_{t\to 0, t>0}\{s_2+tv_2-s_1-tv_1\}=\{s_2-s_1\}$ if $v_2-v_1>0$. But if  $v_2-v_1<0$ and
$s_2-s_1\in \Z$, then $\lim_{t\to 0, t>0}\{s_2+tv_2-s_1-tv_1\}=1$ while $\{s_2-s_1\}= 0$.
However, observe that if $t>0$ is small, then $s_2+tv_2-s_1-tv_1\notin \Z$, hence
$$
\{s_2+tv_2-s_1-tv_1\}=1-\{-(s_2+tv_2-s_1-tv_1)\}.
$$
Thus  the limit statement in the lemma
 holds also for $v_2-v_1<0$,  due to  the property of Bernoulli polynomials:
$B_n(1-u)=(-1)^n B_n(u)$.

Another  way to reach this conclusion is to use the other 
Brion--Vergne decomposition of
the quadrant, 
depicted in Figure~\ref{fig:brion-vergne-2d} (bottom)
.
Then  we obtain
an expression of $S^L(s+\c)$ in terms of $\{s_1-s_2\}$ instead of $\{s_2-s_1\}$.
We see here the well-known link between the valuation property of generating functions
and the functional equation of Bernoulli polynomials.
\end{example}

\begin{remark}
\begin{figure}%
  \centering%
  \ifpdf
  \input{continuity-2-variant.pdf_t}
   \else
   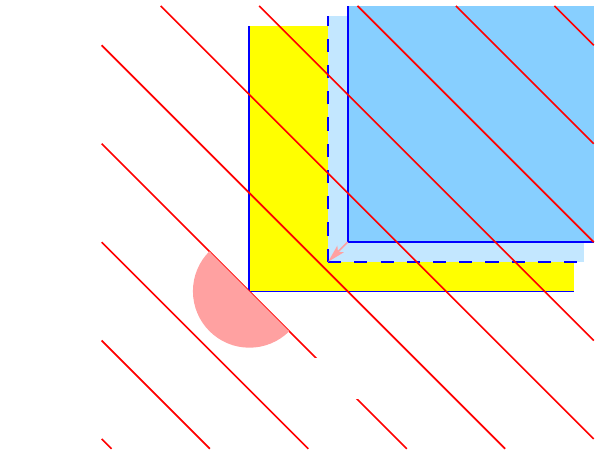
   \fi%
  \caption{One-sided continuity of the functions $M^L(s,\c )(\xi)$ and $S^L(s+\c )(\xi)$ and their homogeneous
    components, as functions of the apex~$s$.  In this example, $L-\c$ is a halfspace, and so
    Proposition~\ref{prop:alcoves} only predicts one-sided continuity.
    However, the functions actually depend continuously on~$s$.}
  \label{fig:continuity-2}%
\end{figure}%
Actually, the functions $s\mapsto \retroS^L(s, \c)_{[m]}(\xi)$ and
 $s\mapsto S^L(s+\c)_{[m]}(\xi)$ enjoy stronger continuity properties on the
 boundary of alcoves than claimed in Proposition~\ref{prop:alcoves}. For instance, for a two-dimensional cone~$\c$, if $L$ is not
 parallel to an edge of the cone and $L\neq\{0\}$, it is intuitively clear that these functions depend
 continuously on~$s$, see Figure~\ref{fig:continuity-2}. It is also enlightening to check the continuity property on the formulas.
\end{remark}

\section{Poisson summation formula and Fourier series of $s\mapsto \retroS^L(s,\c)(\xi)= \e^{-\langle\xi,s\rangle}S^L(s+\c)(\xi)$}\label{sect:poisson}
The Poisson summation formula reads, for  suitable functions $\phi$ on~$V$,
$$
\sum_{x\in \Lambda}\phi(x)=\sum_{\gamma\in \Lambda^*} {\hat \phi}(2\pi\gamma).
$$
Here $\hat \phi$ is the Fourier transform of $\phi$,  with respect to the Lebesgue measure defined by $\lattice$.
Let us apply formally  this formula to the series
\begin{equation}
  S(s+\c)(\xi)= \sum_{x\in (s+\c)\cap\lattice } \e^{\langle\xi,x\rangle}
  = \sum_{x\in \lattice }\e^{\langle\xi,x\rangle}{[s+\c]}(x).
\end{equation}
Formally, the Fourier transform of $\phi(x)= \e^{\langle\xi,x\rangle}{[s+\c]}(x)$ is
$$
\hat{\phi}(2\pi\gamma)=\int_{s+\c} \e^{\langle\xi +2 i\pi \gamma,x\rangle}
\,\mathrm dx = I(s+\c)(\xi+2 i\pi \gamma).
$$
So, heuristically, we obtain
\begin{equation}\label{eq:poisson-formal-S}
 S(s+\c)(\xi)= \sum_{\gamma\in \lattice^*}I(s+\c)(\xi+2 i\pi \gamma).
\end{equation}
This heuristic result admits a precise formulation given in Corollary \ref{cor:poisson-L2} below,
valid also for  intermediate generating functions $S^L(s+\c)$.

For a given $\gamma \in \Lambda^*$,  the function $\xi\mapsto
I(s+\c)(\xi+2 i\pi \gamma)$ belongs also to $\CM_{\ell}(V^*)$. More precisely, if $(v_j)$ are the generators of $\c$, this function has simple hyperplane singularities near $\xi=0$,
with singular hyperplanes  given by  $\langle\xi,v_j\rangle=0$, for the indices $j$ such that $\langle\gamma,v_j\rangle=0$.
 Thus we can also define the homogeneous components
$I(s+\c)(\xi+2 i\pi \gamma)_{[m]}$,  and  $I(s+\c)(\xi+2 i\pi \gamma)_{[m]}$ belongs to $\CR_{[m]}(V^*)$.
For example, for $n\neq 0$, the expansion of
$-\frac{1}{\xi+ 2 i \pi n }$ in homogeneous components is
$$
-\frac{1}{\xi+ 2 i \pi n }= \sum_{m=0}^\infty  \frac{(-1)^{m+1}}{(2 i \pi n)^{m+1}}\xi^m.
$$

\subsection{Fourier series of $ \retroS^L(s,\c)(\xi)$}
The starting point is, once again, the important fact that  the function
$$
s\mapsto  \retroS^L(s,\c)(\xi)= \e^{-\langle\xi,s\rangle}  S^L(s+\c)(\xi),
$$
which is   a function on $V/L$, is  periodic with respect to the projected lattice $\lattice_{V/L}$.
Each homogeneous component
 $\retroS^L(s,\c)_{[m]}(\xi)$ is periodic as well, and  piecewise polynomial, hence bounded.
 We are going to compute its Fourier coefficients.

 \begin{theorem}\label{th:poisson}
   Let $\c\subset V$ be a full-dimensional cone with  edge generators $v_1,\dots, v_N$. Let $L\subseteq V$ be a rational linear subspace.
   For $s\in V$, let $\retroS^L(s, \c)(\xi)= \e^{-\langle \xi,s\rangle}S^L(s+ \c, \lattice)(\xi)$.
   For every $m\in \Z$,  consider the homogeneous component
   $\retroS^L(s,\c)_{[m]}(\xi)$ as a periodic function of~$s\in V$ with values  in  the finite-dimensional space $\frac{1}{\prod_{j=1}^{N}v_j}\CP_{[m+N]}(V^*)$
 of rational functions in~$\xi$ of homogeneous degree $m$
 whose denominator divides $\prod_{j=1}^N \langle\xi,v_j\rangle $.
 Then the   Fourier series of $ \retroS^L(s,\c)_{[m]}(\xi)$ is
   \begin{equation}\label{eq:fourier-homogeneous-component}
   \retroS^L(s,\c)_{[m]}(\xi)= \sum_{\gamma \in \lattice^*\cap L^\perp}
   \e^{\langle 2 i \pi \gamma,s\rangle}
I(\c)(\xi+2i \pi \gamma)_{[m]}.
   \end{equation}
\end{theorem}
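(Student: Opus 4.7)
My plan is to compute the Fourier coefficient of $s \mapsto \retroS^L(s,\c)_{[m]}(\xi)$ at $\gamma \in \lattice^*\cap L^\perp$ directly. I will first work at the level of the full meromorphic function $\retroS^L(s,\c)(\xi)$, then descend to the $m$-th homogeneous component using the finite-dimensional nature of the target space established in Theorem~\ref{prop:homogeneous-ML}.

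The first step is to fix $\xi_0 \in V^*_\C$ in a region of absolute convergence of the defining series \eqref{eq:SL}. For such $\xi_0$, $s \mapsto \retroS^L(s,\c)(\xi_0)$ is a continuous bounded function on the compact torus $V/(\lattice+L)$, and I compute the Fourier coefficient at $\gamma$ by substituting the defining sum-integral and performing the change of variables $u = x-s \in \c$, so that the factor $e^{\langle\xi_0,s\rangle}$ from the translation cancels against the $e^{-\langle\xi_0,s\rangle}$ from the definition of $\retroS^L$. This leaves a double sum-integral over $(s,y) \in F \times \lattice_{V/L}$, where $F \subset V$ is a fundamental domain for $\lattice+L$.

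The key step is the unfolding. For each $u \in \c$ there is a unique pair $(s,y) \in F \times \lattice_{V/L}$ with $\pi(u) + \pi(s) = y$, where $\pi\colon V \to V/L$ is the projection, and the induced map $(s,y,u_L) \mapsto u$ is a measure-preserving bijection onto $\c$ (transforming $ds \otimes \#_y \otimes du_L$ into the Lebesgue measure on $\c$). Since $\gamma$ annihilates $L$, we have $e^{-\langle 2i\pi\gamma,s\rangle} = e^{\langle 2i\pi\gamma,u\rangle}$ modulo integer periodicity, and Fubini then collapses the double sum-integral into
\begin{equation*}
  \int_F e^{-\langle 2i\pi\gamma, s\rangle} \retroS^L(s,\c)(\xi_0)\,ds = \int_\c e^{\langle \xi_0 + 2i\pi\gamma, u\rangle}\,du = I(\c)(\xi_0 + 2i\pi\gamma).
\end{equation*}
By Lemma~\ref{lemma:prodwj-SL} and meromorphic continuation, both sides extend to meromorphic functions of $\xi$ near $0$ and therefore coincide as elements of $\CM_{\ell}(V^*)$.

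To conclude, I pass to the $m$-th homogeneous component. For fixed generic $\xi$, the Laurent expansion $\retroS^L(s,\c)(\tau\xi) = \sum_m \tau^m \retroS^L(s,\c)_{[m]}(\xi)$ converges absolutely and uniformly for $s$ in the compact set $F$ and $|\tau|$ small, because the $\tau$-pole structure is independent of $s$ by Lemma~\ref{lemma:prodwj-SL}. Integration over $F$ therefore commutes with the Laurent expansion in~$\tau$, and extracting the coefficient of $\tau^m$ on both sides of the displayed identity gives the desired Fourier coefficient $I(\c)(\xi+2i\pi\gamma)_{[m]}$, which is exactly \eqref{eq:fourier-homogeneous-component}. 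The main technical obstacle will be the careful verification of the measure bijection in the unfolding step (to ensure the normalizations of Lebesgue measure on the quotients $(V/L)/\lattice_{V/L}$, on $L$, and on $\c$ match up correctly) together with the Fubini application in the absolute-convergence regime; once these are in place, the passage to homogeneous components is essentially bookkeeping.
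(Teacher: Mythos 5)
Your proof is correct, but it takes a genuinely different route from the paper's. The paper never computes a Fourier coefficient of the general function directly: it first applies the Brion--Vergne decomposition (Proposition~\ref{prop:brion-vergne-decomposition}) to reduce to simplicial cones with a face parallel to~$L$, uses the product formula~\eqref{eq:SLI} to split off the trivial case $L=V$, then a unimodular decomposition to reduce the case $L=\{0\}$ to a tensor product of one-dimensional cones, and finally identifies the one-dimensional statement with the classical Fourier series of the periodic Bernoulli polynomials, which it re-proves by integrating $\e^{s\xi}\xi/(\e^\xi-1)$ against $\e^{-2i\pi ns}$. Your unfolding argument --- the substitution $u=x-s$, the observation that $\e^{-\langle 2i\pi\gamma,s\rangle}=\e^{\langle 2i\pi\gamma,u\rangle}$ precisely because $\gamma\in\lattice^*\cap L^\perp$, and the measure-preserving bijection between $F\times\lattice_{V/L}\times L$ and $V$ --- computes the Fourier coefficient in one stroke for an arbitrary cone and arbitrary~$L$, with no decompositions, and makes transparent why only $\gamma\in\lattice^*\cap L^\perp$ appear. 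What your route costs is the analytic bookkeeping you flag at the end, and one of those points deserves more than a flag: steps (b) the meromorphic continuation of $\xi\mapsto\int_F \e^{-\langle 2i\pi\gamma,s\rangle}\retroS^L(s,\c)(\xi)\,\mathrm ds$ from the tube of absolute convergence to a neighborhood of $0$, and (c) the interchange of the $s$-integral with extraction of the $\tau^m$-coefficient, both require that $\prod_j\langle\xi,v_j\rangle\retroS^L(s,\c)(\xi)$ be holomorphic on a fixed ball and \emph{uniformly bounded in $s$}; Lemma~\ref{lemma:prodwj-SL} only supplies the $s$-independent denominator, so you should invoke the uniform boundedness established in the proof of Proposition~\ref{prop:alcoves} (or, equivalently, the fact from Theorem~\ref{prop:homogeneous-ML} that each homogeneous component is a bounded step-polynomial of~$s$ with values in a finite-dimensional space, combined with dominated convergence in the Cauchy integral for the Taylor coefficients). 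Also note that for a cone containing a line both sides vanish, so the restriction to pointed cones in your convergence argument is harmless. With those references supplied, your argument is complete and arguably more direct than the paper's.
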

\begin{proof}
Given $\c$ and $L$,  we decompose $[\c]\equiv \sum_i \epsilon_i [\c_i]$,
where now each cone $\c_i$ is simplicial with a face parallel to $L$ and
full-dimensional (Proposition~\ref{prop:brion-vergne-decomposition}). By linearity of homogeneous components and Fourier coefficients, we can assume that $\c$ is simplicial with a face parallel to $L$. Then we write $S^L(s+\c)(\xi)$ as a tensor  product of a discrete generating function in dimension $k=\codim L$ with a continuous one in dimension $\dim L$. Thus, we are reduced to the cases $L=\{0\}$ and $L=V$.

We observe that the result is true when $L=V$, as $\retroS^V(s, \c)(\xi)= I(\c)(\xi)$ does not depend on $s$.

There remains to prove the theorem for $L=\{0\}$. We will do this  by
reduction to the dimension one case as follows.
If $\c\subset V $ is a full-dimensional cone,  we can decompose
$[\c]\equiv \sum \epsilon_a [\c_a]$ modulo indicators of cones with lines, where $(\c_a)$ is a finite set of unimodular cones of full dimension.
By linearity of homogeneous components and Fourier coefficients, we can assume
that $\c$ is unimodular. Then $S(s+\c)$ and $I(\c)(\xi+2i\pi \gamma)$ are
tensor products of corresponding one-dimensional generating functions. Thus
the theorem follows from the dimension one case. 

Thus, finally let us consider the dimension one case with $L=\{0\}$. 
Without loss of generality, let $\c= \R_{\geq 0}$ and $\Lambda=\Z$.  In the
following, we write $n = \gamma \in \Lambda^* = \Z$.  Recall
\begin{align*}
  M(s,\c)(\xi) &= \frac{\e^{ \{-s\} \xi }}{1-\e^\xi}, & 
  I(\c)(\xi+2 i\pi n) &= -\frac{1}{\xi+2i\pi n}.
\end{align*}
To determine the left-hand side of \eqref{eq:fourier-homogeneous-component},
we write the Laurent series of  $ M(s,\c)(\xi) $, which is given by
 $$
 M(s,\c)(\xi) = \frac{\e^{ \{-s\} \xi} }{1-\e^\xi}= -\frac{1}{\xi}\, - \sum_{m=0}^\infty \frac{B_{m+1}(\{-s\})}{(m+1)!}\xi^m,
$$
where $B_m(t)$ is the Bernoulli polynomial.
To compare this with the right-hand side of
\eqref{eq:fourier-homogeneous-component},
note that the term in the sum for $n = 0$ gives the contribution $-\frac1\xi$, 
whereas 
for $n\neq 0$, $I(\c)(\xi+2 i\pi n)$ is holomorphic near $\xi=0$, with  Taylor series
$$
-\frac{1}{\xi+2i\pi n}=  \sum_{m=0}^\infty  \frac{(-1)^{m+1}}{(2 i \pi n)^{m+1}}\xi^m.
$$
Comparing coefficients, we see that we only need to verify 
the following Fourier series of $-\frac{B_{m}(\{-s\})}{m!}$ for $m\geq 1$,
$$
-\frac{B_m(\{-s\})}{m!}= \sum_{n\in \Z, n\neq 0}\frac{(-1)^m}{(2 i \pi n)^m}\e^{2i\pi ns}.
$$
By replacing $s$ with $-s$ and $n$ with $-n$ in the sum, this formula becomes
the more familiar Fourier series of the $m$-th periodic Bernoulli polynomial
\begin{equation}\label{eq:bernouilli-series-dim1-1}
\frac{B_m(\{s\})}{m!}= -\sum_{n\in \Z, n\neq 0}\frac{\e^{2i\pi ns}}{(2 i \pi n)^m}.
\end{equation}
Let us give a short proof of  \eqref{eq:bernouilli-series-dim1-1}. Denote $$
\phi(s,\xi)=\frac{\e^{ s \xi} \xi }{\e^\xi-1}.
$$
This is an holomorphic function of $(s,\xi)$, for $\xi$ in a small disc around $0$.
By definition of the Bernoulli polynomial, the Taylor series of  $\phi(s,\xi)$  at $\xi=0$ is
$\sum_{m=0}^{\infty}\frac{B_m(s)}{m!}\xi^m$.

Fix $\xi$ small,  consider
   $s\mapsto \phi(s,\xi)$ as a $L^2$-function of $s\in [0,1]$, and compute its $n$th Fourier coefficient.
\begin{align*}
\int_0^1 \e^{-2 i\pi n s}\phi(s,\xi) \,\mathrm ds
 & = \int_0^1\frac{ \e^{s(\xi-2 i\pi n)}\xi}{\e^\xi-1} \,\mathrm ds\\
 &= \frac{\xi}{\e^\xi-1}\frac{ \e^{\xi-2 i\pi n}-1}{\xi-2 i\pi n}
 =\frac{\xi}{\xi-2 i\pi n}.
 \end{align*}

 We can now  take the Taylor series with respect to $\xi$ of both extreme
 sides of these equalities, and we obtain that  the  $n$th Fourier coefficient
 of the $m$-th periodic Bernoulli polynomial $\frac{B_m(\{s\})}{m!}$ is
$-\frac{1}{(2 i \pi n)^m}$ for $n\neq 0$, and $0$ if $n=0$.
\end{proof}
\begin{remark}
Moreover, in dimension one, we have the following  pointwise result. When $m>1$, both sides of \eqref{eq:bernouilli-series-dim1-1} define continuous functions of $s$, the series of the right hand side is absolutely convergent, and the equality above is pointwise.
If $m=1$, the series of the right hand side is convergent  in the $L^2$-sense and coincides with
 a function on $\R\setminus \Z$, linear on each open interval.
The left hand side (a function of $s$ defined for \emph{every}~$s$) is recovered from the right hand side by taking left limits at every integral point.

Thanks to the one-sided continuity properties of  $\retroS^L(s,\c)_{[m]}(\xi)$ (Proposition \ref{prop:alcoves}),  we will deduce similar pointwise results from Theorem \ref{th:poisson} in any dimension.
\end{remark}

By writing $S^L(s+\c)(\xi)=\e^{\langle \xi,x\rangle} \retroS^L(s,\c)(\xi)$,
we obtain a precise statement for the Poisson summation formula discussed above.
However, as we have already seen
and will see again,
the technically useful function is the $\lattice$-periodic function $\retroS^L(s,\c)(\xi)$
and its Fourier series.
\begin{corollary}\label{cor:poisson-L2}
For every $m\in \Z$,  the equality
   \begin{equation}\label{eq:poisson-homogeneous-component}
    S^L(s+\c)_{[m]}(\xi)= \sum_{\gamma \in \lattice^*\cap L^\perp}
\bigl(I(s+\c)(\xi+2i \pi \gamma)\bigr)_{[m]}
   \end{equation}
 holds   in the sense of  locally $L^2$-functions of  $s\in V$ with values
 in  the finite-dimensional space $\frac{1}{\prod_{j=1}^{N}v_j}\CP_{[m+N]}(V^*)$.
\end{corollary}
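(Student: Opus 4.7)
The plan is to deduce this corollary directly from Theorem~\ref{th:poisson} using the defining relation $S^L(s+\c)(\xi) = \e^{\langle\xi,s\rangle}\retroS^L(s,\c)(\xi)$ together with the explicit formula \eqref{eq:homogeneous-ML} for homogeneous components. The key point is that on the right-hand side of the claim, $I(s+\c)(\xi+2i\pi\gamma)$ stands in the same relation to $I(\c)(\xi+2i\pi\gamma)$: namely $I(s+\c)(\xi+2i\pi\gamma) = \e^{\langle\xi+2i\pi\gamma,s\rangle}I(\c)(\xi+2i\pi\gamma)$. So the multiplication by $\e^{\langle\xi,s\rangle}$ should convert one Poisson formula into the other, once we sort out the bookkeeping of homogeneous components.

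First I would apply \eqref{eq:homogeneous-ML} to write
\begin{equation*}
S^L(s+\c)_{[m]}(\xi)=\sum_{r=0}^{m+d}\frac{\langle\xi,s\rangle^r}{r!}\,\retroS^L(s,\c)_{[m-r]}(\xi),
\end{equation*}
and then substitute the Fourier expansion of each $\retroS^L(s,\c)_{[m-r]}(\xi)$ given by Theorem~\ref{th:poisson}. Since the sum over $r$ is finite, I may interchange it with the sum over $\gamma \in \lattice^*\cap L^\perp$, obtaining
\begin{equation*}
S^L(s+\c)_{[m]}(\xi)=\sum_{\gamma\in\lattice^*\cap L^\perp}\e^{\langle 2i\pi\gamma,s\rangle}\sum_{r=0}^{m+d}\frac{\langle\xi,s\rangle^r}{r!}\,I(\c)(\xi+2i\pi\gamma)_{[m-r]}.
\end{equation*}
Next I would recognize the inner sum as a homogeneous component: expanding $\e^{\langle\xi,s\rangle}=\sum_{r\geq 0}\frac{\langle\xi,s\rangle^r}{r!}$ and taking the $m$-th homogeneous component in $\xi$ of $\e^{\langle\xi+2i\pi\gamma,s\rangle}I(\c)(\xi+2i\pi\gamma)=I(s+\c)(\xi+2i\pi\gamma)$ gives exactly
\begin{equation*}
\bigl(I(s+\c)(\xi+2i\pi\gamma)\bigr)_{[m]}=\e^{\langle 2i\pi\gamma,s\rangle}\sum_{r\geq 0}\frac{\langle\xi,s\rangle^r}{r!}\,I(\c)(\xi+2i\pi\gamma)_{[m-r]},
\end{equation*}
where only finitely many terms (those with $r\leq m+d$) are nonzero because $I(\c)(\xi+2i\pi\gamma)$ lies in $\CM_{\ell}(V^*)$ with lowest-degree term of degree $-d$. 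Substituting yields the formula claimed in~\eqref{eq:poisson-homogeneous-component}.

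The main obstacle is justifying the mode of convergence. Theorem~\ref{th:poisson} provides convergence in the sense of periodic $L^2$-functions of $s\in V/\lattice_{V/L}$, with values in the finite-dimensional space $\frac{1}{\prod_{j=1}^N v_j}\CP_{[m-r+N]}(V^*)$. After multiplication by the polynomial $\frac{\langle\xi,s\rangle^r}{r!}$, which is no longer periodic or bounded on all of~$V$ but is bounded on every compact subset, the resulting sum converges in the local $L^2$-sense on~$V$. The interchange of the sum over $\gamma$ with the finite sum over $r$ is legitimate in that space, and multiplication by a fixed polynomial in $s$ preserves local $L^2$-convergence with values in the enlarged (still finite-dimensional) target space $\frac{1}{\prod_{j=1}^N v_j}\CP_{[m+N]}(V^*)$. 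This gives the stated equality in the sense of locally $L^2$-functions.
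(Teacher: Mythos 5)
Your proposal is correct and follows exactly the route the paper intends: the paper derives the corollary from Theorem~\ref{th:poisson} with the single remark that one multiplies by $\e^{\langle\xi,s\rangle}$, and your argument is a careful, correct expansion of that step via \eqref{eq:homogeneous-ML} and the identity $I(s+\c)(\xi+2i\pi\gamma)=\e^{\langle\xi+2i\pi\gamma,s\rangle}I(\c)(\xi+2i\pi\gamma)$, with the convergence bookkeeping handled appropriately.
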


\subsection{Poles and residues of $S^L(s+\c)(\xi)$}\label{sect:poles-residues}
As a first consequence of the Poisson formula and left-continuity properties, we determine the poles and residues
of the intermediate generating functions.
As we promised in Section \ref{sect:simplicial}, we can now prove the following result.
\begin{proposition}\label{prop:polesSL}
 Let $\c$ be a  cone in $V$ with edge generators $v_1,\dots,v_N$ and let $s$ be any point in $ V$.
     Let  $L\subseteq V$ be a linear subspace. The product
     $ \prod_{j=1}^N \langle\xi,v_j\rangle \cdot S^L(s+\c)(\xi)$ is holomorphic near $\xi=0$.
 \end{proposition}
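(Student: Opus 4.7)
The plan is to deduce the proposition from Theorem~\ref{th:poisson} by means of a short algebraic observation about meromorphic functions with hyperplane singularities. The point is that Theorem~\ref{th:poisson} already constrains the pole structure of every homogeneous component, and this information can be lifted to the full meromorphic function via a divisibility argument in the power series ring.

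First, I would invoke Theorem~\ref{th:poisson}, which asserts that for every $m\in\Z$ the homogeneous component $\retroS^L(s,\c)_{[m]}(\xi)$ takes values in the finite-dimensional space $\frac{1}{\prod_{j=1}^N \langle\xi,v_j\rangle}\CP_{[m+N]}(V^*)$. Equivalently, setting
\[
g_m(\xi) := \prod_{j=1}^N \langle\xi,v_j\rangle\cdot \retroS^L(s,\c)_{[m]}(\xi),
\]
each $g_m$ is an honest polynomial in $\xi$ of homogeneous degree $m+N$. By Theorem~\ref{prop:homogeneous-ML}(iii) the Laurent expansion in $\xi$ starts at $m=-d$, and since $\c$ is full-dimensional we have $N\geq d$, so every $g_m$ is a polynomial of non-negative homogeneous degree.

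Next, consider the meromorphic function $g(\xi):=\prod_{j=1}^N \langle\xi,v_j\rangle\cdot \retroS^L(s,\c)(\xi)$. By Lemma~\ref{lemma:prodwj-SL}, $g$ lies in $\CM_\ell(V^*)$, so I can write $g=\psi/Q$ in lowest terms, where $Q$ is a product of linear forms corresponding to the actual pole hyperplanes of $g$ and $\psi$ is a convergent power series near $\xi=0$ coprime to $Q$. Expanding $\psi=\sum_{j}\psi_{[j]}$ in homogeneous components, the homogeneous components of $g$ are given by $g_m=\psi_{[m+\deg Q]}/Q$. The previous step showed each $g_m$ is a polynomial, so $Q$ must divide every homogeneous piece $\psi_{[j]}$, and hence $Q$ divides $\psi$ in the ring of convergent power series at the origin. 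Combined with the coprimality built into the ``lowest terms'' representation, this forces $Q$ to be a nonzero constant, and consequently $g$ is holomorphic near $\xi=0$.

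Finally, since $S^L(s+\c)(\xi)=e^{\langle\xi,s\rangle}\retroS^L(s,\c)(\xi)$ and $e^{\langle\xi,s\rangle}$ is entire, the function $\prod_{j=1}^N \langle\xi,v_j\rangle\cdot S^L(s+\c)(\xi)=e^{\langle\xi,s\rangle}\,g(\xi)$ is holomorphic near $\xi=0$, as claimed. The main technical step is the divisibility argument in the middle paragraph: the Poisson formula collapses the a priori denominator provided by Lemma~\ref{lemma:prodwj-SL} down to a divisor of $\prod_{j=1}^N \langle\xi,v_j\rangle$, because each Fourier term $I(\c)(\xi+2i\pi\gamma)_{[m]}$ already has this property.
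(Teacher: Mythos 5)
Your algebraic reduction in the second half is sound: if every homogeneous component of $g=\prod_{j=1}^N\langle\xi,v_j\rangle\cdot\retroS^L(s,\c)(\xi)$ is a polynomial, then the a priori denominator supplied by Lemma~\ref{lemma:prodwj-SL} collapses and $g$ is holomorphic. The gap is in the first step, where you read Theorem~\ref{th:poisson} as asserting, for the \emph{given} point $s$, that $\retroS^L(s,\c)_{[m]}(\xi)$ lies in $\frac{1}{\prod_{j=1}^N v_j}\CP_{[m+N]}(V^*)$. That is not what the theorem delivers: the Fourier expansion \eqref{eq:fourier-homogeneous-component} holds only in the sense of locally $L^2$-functions of $s$ (this is made explicit in Corollary~\ref{cor:poisson-L2} and in the remark following Theorem~\ref{th:poisson}, which announces that the \emph{pointwise} statements still have to be deduced). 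So what you actually get from the Poisson formula is that $g_m$ is a polynomial for \emph{almost all} $s$ modulo $\lattice+L$. Reading the containment in the statement of Theorem~\ref{th:poisson} as already valid for every $s$ would make Proposition~\ref{prop:polesSL} circular, since that containment for all $s$ is essentially the proposition itself. And the exceptional set is exactly where the claim is delicate: the problematic apices $s$ lie on the walls of the $\Psi^L_\c$-alcoves, a measure-zero set that the $L^2$ statement cannot see.

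The paper closes this gap with the one-sided continuity results. By Proposition~\ref{prop:alcoves}(i), $s\mapsto\retroS^L(s,\c)_{[m]}(\xi)$ restricted to any alcove $\a$ is a polynomial map into a fixed finite-dimensional space of rational functions; since the closed linear condition ``$\prod_{j=1}^N\langle\xi,v_j\rangle\cdot(\cdot)$ is a polynomial'' holds on a dense subset of $\a$, it holds on all of $\a$. Then for $s_0$ on an alcove boundary, Proposition~\ref{prop:alcoves}(ii) expresses $\retroS^L(s_0,\c)_{[m]}(\xi)$ as a one-sided limit of values taken inside an alcove, and the condition passes to the limit. Your proof needs this two-stage continuity argument (or some substitute for it) inserted between ``the Fourier coefficients $I(\c)(\xi+2i\pi\gamma)_{[m]}$ have denominator dividing $\prod_j\langle\xi,v_j\rangle$'' and ``hence $g_m$ is a polynomial for the given $s$''; as written, the conclusion is only established off a null set of apices.
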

\begin{proof}
It is enough to prove that $ \prod_{j=1}^N \langle\xi,v_j\rangle \cdot \retroS^L(s,\c)(\xi)$
is holomorphic near $\xi=0$ or, equivalently, that for each homogeneous degree $m\in \Z$, the product
\begin{equation}\label{eq:product-is-polynomial}
\prod_{j=1}^N \langle\xi,v_j\rangle \cdot \retroS^L(s,\c)_{[m]}(\xi)
\end{equation}
is a polynomial in $\xi$.
By Theorem \ref{th:poisson}
, \eqref{eq:product-is-polynomial}
is  polynomial in $\xi$ for almost all $s\in V/\lattice$.
Moreover, by Proposition \ref{prop:alcoves},
\eqref{eq:product-is-polynomial}  is continuous with respect to $s$ on every alcove.
For a given $s_0\in V$, and any alcove~$\a$ such that $s_0$ is in the boundary of   $(s_0+\c)\cap \a$,
$$\prod_{j=1}^N \langle\xi,v_j\rangle \cdot \retroS^L(s_0,\c)_{[m]}(\xi)
= \lim_{\substack{s\to s_0\\s\in\alcove}} \prod_{j=1}^N \langle\xi,v_j\rangle
\cdot \retroS^L(s,\c)_{[m]}(\xi),$$
where the limit holds in the space of polynomials in~$\xi$ of degree $m+N$.
It follows that  \eqref{eq:product-is-polynomial} is a polynomial in $\xi$ for every $s\in V/\lattice$.
\end{proof}
Furthermore, there is a nice formula for  the residue along a hyperplane  $v_j^\perp\subset V^*$.

\begin{proposition}\label{prop:residueSL}
    Let $\c$ be a  cone in $V$ and let $s\in V$.
     Let  $L\subseteq V$ be a linear subspace.
  Let $v\in V$. The projection $V\to V/\R v$ is denoted by $p$. The dual space $( V/\R v)^* $
  is identified with the hyperplane
  $v^\perp \subset V^*$.

  \begin{enumerate}[\rm (i)]
  \item The function
    $$
    \langle\xi,v\rangle S^L(s+\c)(\xi)
    $$
    restricts to the hyperplane $v^\perp$ in a meromorphic function, element of $\CM_{\ell}(v^\perp)$.

  \item
    If $v$ is not an edge of $\c$, then this restriction is $0$.

  \item 
    Let $v\in \lattice $ be a primitive vector, generating an edge of $\c$.
    Then the restriction of $ \langle\xi,v\rangle S^L(s+\c)(\xi)$ to $v^\perp$ is given by
    $$
    \bigl(\langle \xi,v\rangle S^L(s+\c, \lattice)(\xi) \bigr)\big|_{v^\perp}= - S^{p(L)}(p(s+\c), p(\lattice)).
    $$
  \end{enumerate}
\end{proposition}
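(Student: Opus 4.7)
Parts (i) and (ii) follow directly from Proposition~\ref{prop:polesSL}, which asserts that the only possible singular hyperplanes of $S^L(s+\c)(\xi)$ near $\xi=0$ are the hyperplanes $v_j^\perp$ attached to the edge generators $v_j$ of $\c$. In part (i), if $v$ is an edge generator, multiplication by $\langle\xi,v\rangle$ cancels the corresponding factor in a denominator, yielding a function whose restriction to $v^\perp$ is a well-defined element of $\CM_\ell(v^\perp)$, with remaining poles lying on the codimension-one subspaces $v^\perp\cap v_k^\perp$ with $k\neq j$. In part (ii), if $v$ is not parallel to any edge, then $v^\perp$ is not among the polar hyperplanes, so $S^L(s+\c)$ is holomorphic generically on $v^\perp$, and multiplication by $\langle\xi,v\rangle$, which vanishes identically on $v^\perp$, forces the restriction to be zero.

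For part (iii), I would reduce to the case where $\c$ is simplicial with a face parallel to~$L$ and $v$ is one of its edge generators, by a Brion--Vergne decomposition (Proposition~\ref{prop:brion-vergne-decomposition}) arranged so that $v$ is preserved as an edge of each simplicial piece---obtained for instance by triangulating the projected cone $p(\c)\subset V/\R v$ and then lifting with apex~$v$. Both sides of the claimed identity are additive under such a decomposition: the left-hand side because $S^L$ is a valuation that vanishes on cones with lines, and the right-hand side because any auxiliary cone with lines arising modulo necessarily has its line direction transverse to~$\R v$, hence projects under~$p$ to a cone with lines, on which $S^{p(L)}$ also vanishes.

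In the simplicial case, I would write $L=L_I$ and $v=v_{i_0}$ and apply the product formula~\eqref{eq:SLI}. In Case~A, $i_0\in I$ (so $v\in L$), the pole along $v^\perp$ lies in the integral factor $I_{\lattice\cap L_I}(s_I+\c_I)(\xi_I)$; the explicit formula~\eqref{eq:I}, together with the primitivity of $v$ in $\lattice\cap L_I$ (which makes box volumes match before and after projection), gives
\begin{equation*}
\bigl(\langle\xi,v\rangle\, I(s_I+\c_I)(\xi_I)\bigr)\big|_{v^\perp} = -I(p(s_I)+p(\c_I))(\xi_I|_{v^\perp}),
\end{equation*}
where the sign arises from $(-1)^{|I|}=-(-1)^{|I|-1}$ after cancelling one denominator factor. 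In Case~B, $i_0\in I^c$ (so $v\notin L$), the pole lies in the discrete factor $S(s_{I^c}+\c_{I^c})(\xi_{I^c})$; using \eqref{eq:Sbox} together with the limit $\lim_{t\to 0}t/(1-\e^t)=-1$ gives the analogous identity with $-S(p(s_{I^c})+p(\c_{I^c}))$. In either case, reassembling the unaffected factor via the product formula applied to $p(\c)$, $p(L)$, $p(\lattice)$ yields $-S^{p(L)}(p(s+\c),p(\lattice))$, as required.

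The hard part is the reduction step: justifying that the Brion--Vergne decomposition can simultaneously produce simplicial pieces with a face parallel to~$L$ \emph{and} with $v$ as an edge, and verifying that the cones-with-lines terms on both sides are treated consistently. A Fourier-theoretic alternative via the Poisson formula of Theorem~\ref{th:poisson} is also available: only the Fourier modes $\gamma\in\lattice^*\cap L^\perp\cap v^\perp$ produce a pole of $I(\c)(\xi+2i\pi\gamma)$ along $v^\perp$, and computing the residue of $I(\c)(\xi+2i\pi\gamma)$ reproduces the Fourier expansion of the analogous meromorphic function for the projected data, giving the same conclusion.
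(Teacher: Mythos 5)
Parts (i) and (ii) are correct and are exactly what the paper does (immediate from Proposition~\ref{prop:polesSL}), and your simplicial computation for (iii) via the product formula~\eqref{eq:SLI}, split according to whether $v\in L$ or $v\notin L$, is sound. The genuine gap is the reduction step, and it is more serious than ``hard but doable''. First, your proposed construction (triangulate $p(\c)$ and lift with apex $v$) produces pieces with $v$ as an edge but gives no control on having a face parallel to $L$; applying Proposition~\ref{prop:brion-vergne-decomposition} afterwards can introduce new edges and destroy $v$ as an edge. Second, your additivity claim for the right-hand side is not just a matter of bookkeeping the cones with lines: for a full-dimensional piece $\c_i$ that does \emph{not} have $v$ as an edge, the left-hand side contributes $0$ by part (ii), while $S^{p(L)}(p(s+\c_i),p(\lattice))$ need not vanish (e.g.\ $p(\c_i)$ can be pointed and full-dimensional in $V/\R v$), so identity (iii) is simply \emph{false} for such a piece and cannot be summed over the decomposition. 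Likewise, a cone with a line in the direction $\R v$ contributes $0$ on the left but can have a pointed projection, contradicting your ``transverse to $\R v$'' assertion. So the decomposition route, as sketched, does not close.

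The paper's actual proof is the route you relegate to your last sentence. It expands $\langle\xi,v\rangle\retroS^L(s,\c)_{[m]}(\xi)$ in its Fourier series (Theorem~\ref{th:poisson}); only the modes $\gamma\in\lattice^*\cap L^\perp$ with $\langle\gamma,v\rangle=0$ survive restriction to $v^\perp$, and the identity $\bigl(\langle\xi,v\rangle I(\c)(\xi+2i\pi\gamma)\bigr)\big|_{v^\perp}=-I(p(\c))(\xi+2i\pi\gamma)$ --- proved after a subdivision of $\c$ \emph{without added edges}, so that only the integral $I$, a genuine valuation with explicit formula~\eqref{eq:I}, needs to be decomposed --- identifies the result with the Fourier series of $-\retroS^{p(L)}(p(s),p(\c))_{[m]}$ over $p(\lattice)^*=\lattice^*\cap v^\perp$. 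If you pursue this route, note the step your sketch omits: the Fourier identity is a priori only an $L^2$/almost-everywhere statement in $s$, and one must upgrade it to a pointwise identity for \emph{every} $s$ using polynomiality on alcoves and the one-sided continuity of Proposition~\ref{prop:alcoves}, exactly as in the proof of Proposition~\ref{prop:polesSL}.
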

\begin{proof}
(i) and (ii) follow immediately from the previous proposition.

Let $v$ be a primitive edge generator of $\c$.  We compute each homogeneous component
of the restriction $ (\langle \xi,v\rangle \retroS^L(s,\c)(\xi))_{[m]} |_{v^\perp}$.
This restriction in the sum of the Fourier series of restrictions
$$
 \sum_{\gamma \in \lattice^*\cap L^\perp}
 \e^{\langle 2 i \pi \gamma,s\rangle}
 \bigl(\langle \xi,v\rangle I(\c)(\xi+2i \pi \gamma)\bigr)_{[m]}\big|_{{v^\perp}}.
$$
By subdivision without added edges,\footnote{See remarks in the proof of Lemma~\ref{lemma:prodvj-S}.} 
we can assume that $\c$ is simplicial, with primitive edge generators
$v_1=v, v_2,\dots, v_d$. Fix $\gamma\in \lattice^*$. Then
$$
\langle \xi,v\rangle I(\c)(\xi+2i \pi \gamma)=  \mathopen|\det\nolimits_{\lattice}(v_1,\dots,v_d)\mathclose|(-1)^d
\frac{\langle \xi,v\rangle}{\prod_{j=1}^d \langle \xi+2i\pi \gamma ,v_j\rangle}.
$$
We can assume that $v_1$ belongs to a basis of $\lattice$, so that
$\mathopen|\det_\lattice(v_1,\dots,v_d)|= \mathopen|\det_{p(\lattice)} (p(v_2),\dots,p(v_d))|$.
 If $\langle \gamma,v_1\rangle \neq 0$,
  we have
$$
\frac{\langle \xi,v_1\rangle}{\prod_{j=1}^d \langle \xi+2i\pi \gamma ,v_j\rangle}\bigg|_{v_1^\perp}=0.
$$
If $\langle \gamma,v_1\rangle = 0$,
 we have
 $$
    \frac{\langle \xi,v_1\rangle}{\prod_{j=1}^d \langle \xi+2i\pi \gamma ,v_j\rangle}\bigg|_{{v_1^\perp}} =
    \prod_{j=2}^d \langle \xi +2i\pi \gamma, p(v_j)\rangle.
$$
Hence, we have proved the equality
 $$
 \bigl( \langle \xi,v_1\rangle I(\c)(\xi+2i \pi \gamma)\bigr)\big|_{{v_1^\perp}}= -I(p(\c))(\xi +2i\pi \gamma).
 $$
 Then we complete the proof of (iii) as we did for Proposition \ref{prop:polesSL}.
\end{proof}

\section{Barvinok's patched generating functions. Approximation of the generating function of a cone}\label{sect:fullbarvinok}

Following Barvinok \cite{barvinok-2006-ehrhart-quasipolynomial},
we introduce some particular linear combinations of intermediate generating functions of a polyhedron.

\subsection{Barvinok's patched generating function associated with a family of slicing subspaces}

Let $\CL$ be a finite family
of linear subspaces  $L\subseteq V$ which is closed under sum.
Consider the subset $\bigcup_{L\in \CL}L^{\perp}$ of $V^*$.
Because $L_1^\perp\cap L_2^{\perp}=(L_1+L_2)^{\perp}$ for any $L_1, L_2$,
the family $\{\, L^{\perp} : L\in \CL\,\}$ is stable under intersection.
Thus there exists a unique function $\rho$ on $\CL$ such that
$$
\Bigl[\bigcup_{L\in \CL} L^{\perp}\Bigr]=\sum_{L\in \CL}\rho(L)[L^{\perp}].
$$
We will say that $L\mapsto \rho(L)$ is the \emph{patching function} of~$\CL$.
It is related to the M\"obius function of the poset $\CL$ as follows.
Let $ \hat{\CL}$ be the poset obtained by adding a smallest element $ \hat{0}$ to $\CL$.
 Denote by $\mu$ its M\"obius function.
\begin{lemma}\label{lemma:patching-Mobius}
 The patching function $\rho(L)$, $L\in \CL$  is given by
$$
\rho(L)=-\mu( \hat{0},L).
$$
\end{lemma}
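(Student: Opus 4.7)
The approach is classical Möbius inversion on the poset $\CL$, ordered by inclusion of subspaces, combined with the uniqueness of~$\rho$ which is already asserted before the statement.

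First, I would partition $\bigcup_{L \in \CL} L^\perp$ into strata indexed by $\CL$. For every $\xi \in \bigcup_{L \in \CL} L^\perp$, the subset $\{\,L \in \CL : \xi \in L^\perp\,\}$ is nonempty and is closed under sum, because $\xi \in L_1^\perp \cap L_2^\perp = (L_1+L_2)^\perp$. Hence it has a unique maximum element, which I will call $L(\xi)$. Set $A(L) = \{\,\xi \in L^\perp : L(\xi) = L\,\}$. Then the $A(L)$'s, $L \in \CL$, form a disjoint partition of $\bigcup_{L \in \CL} L^\perp$, and moreover, for each $L \in \CL$,
$$
L^\perp = \bigsqcup_{L' \in \CL,\, L' \supseteq L} A(L'),
$$
since $\xi \in L^\perp$ precisely when $L \subseteq L(\xi)$.

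Next, the relation $[L^\perp] = \sum_{L' \in \CL,\, L' \supseteq L} [A(L')]$ can be inverted by Möbius inversion on the poset $\CL$:
$$
[A(L)] = \sum_{L' \in \CL,\, L' \supseteq L} \mu_\CL(L,L')\, [(L')^\perp].
$$
Summing over $L \in \CL$ and swapping the order of summation yields
$$
\Bigl[\bigcup_{L \in \CL} L^\perp\Bigr]
= \sum_{L'\in\CL}\Bigl(\sum_{L\in\CL,\, L\subseteq L'}\mu_\CL(L,L')\Bigr)[(L')^\perp].
$$
Comparing with the defining relation of $\rho$ and invoking its uniqueness, one obtains
$$
\rho(L') = \sum_{L \in \CL,\, L \subseteq L'} \mu_\CL(L,L').
$$

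Finally, I would identify this sum with $-\mu(\hat{0}, L')$ using the defining recursion of the Möbius function in $\hat{\CL}$. Since $\hat{\CL}$ is obtained from $\CL$ by prepending the new minimum $\hat{0}$, the intervals $[L, L']$ with $L, L' \in \CL$ are the same in both posets, so $\mu_{\hat{\CL}}(L,L') = \mu_\CL(L,L')$. Then for $L' \neq \hat{0}$ the recursion gives
$$
0 = \sum_{L \in \hat{\CL},\, L \leq L'} \mu_{\hat{\CL}}(L,L')
 = \mu_{\hat{\CL}}(\hat{0},L') + \sum_{L \in \CL,\, L \subseteq L'} \mu_\CL(L,L'),
$$
so $\rho(L') = -\mu(\hat{0}, L')$, as claimed. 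There is no real obstacle here; the only substantive ingredient is the existence of the maximum $L(\xi)$, which is exactly where the hypothesis that $\CL$ be closed under sum is used.
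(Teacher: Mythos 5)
Your proof is correct and follows essentially the same route as the paper's: the paper compresses the whole argument into the observation that the patching condition is equivalent to the triangular system $\sum_{L\in\CL,\,L\subseteq L_0}\rho(L)=1$ for all $L_0\in\CL$ (which is exactly what your stratification by $L(\xi)$ establishes), and then reads off $\rho(L)=-\mu(\hat 0,L)$ from the defining recursion of the M\"obius function of $\hat{\CL}$, just as you do. Your version merely spells out the M\"obius inversion and the existence of the maximal element $L(\xi)$ explicitly; no gap.
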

\begin{proof} 
  The function $L\mapsto \rho(L)$ is the  \emph{patching function} of $\CL$    if and only if for every  $L_0\in \CL$  we have
  \begin{equation}
    \sum_{\substack{L\in \CL\\ L_0\subset L}} \rho(L)=1. \tag*{\qedhere}
  \end{equation}
\end{proof}

We consider the following linear combination of intermediate generating functions.
\begin{definition}\label{def:Barvinok-sum}
Barvinok's \emph{patched generating function} of a semi-rational polyhedron
$\p\subseteq V$ (with respect to the family $\CL$) is
$$
S^{\CL}(\p)(\xi)=\sum_{L\in\CL}\rho(L) S^L(\p)(\xi).
$$
\end{definition}

\subsection{Example: the patching function of a simplicial cone}\label{subsect:example-patching-simplicial}

For a subset $\f \subseteq V$, the subspace $\lin(\f)$ is defined as the linear
subspace of $V$ generated by $p-q$ for $p,q\in \f$.

\begin{definition}
If $\p\subset V$ is a polyhedron, and $k$ an integer, we denote by $\CL_k(\p)$
the smallest family closed under sum which contains  the subspace
 $\lin(\f)$ for every face
$\f$ of $\p$ of codimension $\leq  k$.
\end{definition}
Let $d=\dim V$ and let $\c$ be a simplicial cone with edge generators $v_1,\dots, v_d$.
We computed the patching function of $\CL_k(\c)$ in  \cite{so-called-paper-1}. Let us recall the result.
In the case of a simplicial cone, $\CL_k(\c)$  is just the family of subspaces $\lin(\f)$ for faces
of codimension $\leq  k$. This family is already closed under sum.
If $L_I$ with $I \subseteq \{1,\dots,d\}$ is the  linear space spanned by  the vectors $v_i$, $i\in I$, then  $\CL_k(\c)$ is the family of subspaces $L_I$ with $|I|\geq d-k$.
\begin{proposition}
  The patching function of $\CL_k(\c)$ is given by 
  \begin{equation}\label{eq:cone-patch}
    \rho_{d,k}(L_I)= (-1)^{|I|-d+k}\binomial(|I|-1,d-k-1)
  \end{equation}
  where $\binomial(a,b)=\frac{a!}{b! (a-b)!}$ is the binomial coefficient.
\end{proposition}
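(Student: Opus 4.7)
The plan is to verify the defining relation $[\bigcup_{L \in \CL_k(\c)}L^\perp] = \sum_{L \in \CL_k(\c)}\rho_{d,k}(L)[L^\perp]$ pointwise on~$V^*$. Let $\eta_1,\dots,\eta_d$ denote the basis of~$V^*$ dual to $v_1,\dots,v_d$, and decompose $\xi = \sum_j \xi_j\,\eta_j$. Since $\langle \xi, v_i\rangle = \xi_i$, one has $\xi \in L_I^\perp$ if and only if $I \subseteq Z(\xi) := \{\,j : \xi_j = 0\,\}$. Writing $z = |Z(\xi)|$, the left-hand side equals~$1$ when $z \geq d-k$ and $0$ otherwise, while the right-hand side becomes
\[
\sum_{\substack{I \subseteq Z(\xi) \\ |I| \geq d-k}} \rho_{d,k}(L_I)
\;=\; \sum_{n=d-k}^{z} \binom{z}{n}\,(-1)^{n-(d-k)}\,\binom{n-1}{d-k-1}.
\]
For $z < d-k$ this sum is empty and matches the left-hand side; otherwise one must show that it equals~$1$.

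Setting $a := d-k \geq 1$, it therefore remains to verify the binomial identity
\[
\sum_{n=a}^{z}(-1)^{n-a}\binom{z}{n}\binom{n-1}{a-1} = 1 \qquad \text{for every } z \geq a.
\]
The key observation is that $P(n) := \binom{n-1}{a-1} = (n-1)(n-2)\cdots(n-a+1)/(a-1)!$ is a polynomial in~$n$ of degree $a-1$. Because $a-1 < z$, its $z$-th finite difference at the origin vanishes:
\[
\sum_{n=0}^{z}(-1)^n\binom{z}{n}P(n) = 0.
\]
The integers $n = 1, 2, \dots, a-1$ are roots of~$P$, while $P(0) = (-1)^{a-1}$; hence only the terms $n=0$ and $n \geq a$ survive, giving
\[
(-1)^{a-1} + \sum_{n=a}^{z}(-1)^n \binom{z}{n}\binom{n-1}{a-1} = 0,
\]
and multiplying through by $(-1)^a$ yields the desired identity.

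The only substantive step is this finite-difference argument; the one detail to watch is the boundary value $P(0) = (-1)^{a-1}$ coming from the polynomial (as opposed to combinatorial) extension of the binomial coefficient. Uniqueness of $\rho_{d,k}$ as a function supported on $\CL_k(\c)$ is automatic from the linear independence of the indicator functions $[L_I^\perp]$ of distinct subspaces. An alternative route would apply Lemma~\ref{lemma:patching-Mobius} directly, reducing the problem to a Möbius computation on the augmented poset $\hat{\CL}_k(\c)$; this leads to the same combinatorial identity, now with $j_0 = |I_0|$ playing the role of~$z$, and is resolved by the identical finite-difference argument.
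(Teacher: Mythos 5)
Your proof is correct. Note that the paper itself offers no argument for this proposition---it only recalls the formula from the earlier paper \cite{so-called-paper-1}---so your self-contained verification is a genuine addition rather than a variant of the paper's proof. Your strategy (evaluate both sides of the defining relation $[\bigcup_I L_I^\perp]=\sum_I\rho(L_I)[L_I^\perp]$ at an arbitrary $\xi$, reduce to the count $z=|Z(\xi)|$ of vanishing coordinates, and resolve the resulting alternating sum $\sum_{n=a}^{z}(-1)^{n-a}\binom{z}{n}\binom{n-1}{a-1}=1$ by observing that $\binom{n-1}{a-1}$ is a polynomial of degree $a-1<z$ whose $z$-th finite difference vanishes and whose only nonzero value on $\{0,\dots,a-1\}$ is $P(0)=(-1)^{a-1}$) is clean and complete; the boundary value $P(0)$ and the identification of $P(n)$ with the combinatorial binomial coefficient for $n\geq a$ are exactly the points that need care, and you handle both. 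Your remark on uniqueness (linear independence of indicators of distinct subspaces) correctly closes the loop, and your observation that one could instead verify the condition $\sum_{L\supseteq L_0}\rho(L)=1$ from Lemma~\ref{lemma:patching-Mobius} is the other natural route; it leads to the same identity. The only point left implicit is the degenerate case $k=d$ (where $a=0$ and $\binom{|I|-1}{-1}$ must be read as $1$ for $I=\emptyset$ and $0$ otherwise), which your restriction $a\geq 1$ excludes; it is immediate there since the family then contains $\{0\}$ and the relation reads $[V^*]=[L_\emptyset^\perp]$.
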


\subsection{Approximation of the generating function of a cone}\label{subsect:two-approximations}
In this section we state and prove an approximation theorem, inspired by the results of  Barvinok in \cite{barvinok-2006-ehrhart-quasipolynomial}.
For a cone $s+\c$, we construct a meromorphic function $S^\CL (s+\c )(\xi)$
which approximates $S (s+\c )(\xi)$ in the sense that these two functions have
the same lowest homogeneous degree components in~$\xi$.

\begin{definition}
  We introduce the notation $\CM_{[\geq q]}(V^*)$ for the space of functions
  $\phi$ in $\CM_{\ell}(V^*)$ such that $\phi_{[m]}(\xi)=0$ if $m<q$.
\end{definition}

For brevity, let us introduce a notation similar to Definition \ref{def:Barvinok-sum}.
\begin{definition}\label{def:retroBarvinok-sum}
  $$
   M^\CL (s,\c )(\xi)=\e^{-\langle\xi,s\rangle}S^{\CL}(s+\c)(\xi)=\sum_{L\in\CL}\rho(L) M^L(s,\c)(\xi).
   $$
\end{definition}
\begin{theorem}\label{th:better-than-Barvinok}
Let $\c$ be a rational cone. Fix $k$, $0\leq k\leq d$. Let
$\CL$ be a family of subspaces of $V$, closed under sum, such that
$\lin(\f)\in\CL$ for every face $\f$ of codimension $\leq k$ of $\c$.
Let $\rho$ be the patching function on~$\CL$, let $ S^\CL (s+\c )(\xi)$ be
Barvinok's patched generating function of Definition~\ref{def:Barvinok-sum}
and let $M^\CL (s,\c )(\xi)$ be as in Definition \ref{def:retroBarvinok-sum}. Then, for any $s\in V$,
\begin{equation}\label{eq:approx-M}
M(s,\c )(\xi)-M^\CL (s,\c )(\xi)\in  \CM_{[\geq -d+k+1]}(V^*)
\end{equation}
and
\begin{equation}\label{eq:better-than-Barvinok-cone}
S(s+\c )(\xi)-S^\CL (s+\c )(\xi)\in  \CM_{[\geq -d+k+1]}(V^*).
\end{equation}
\end{theorem}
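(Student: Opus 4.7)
The statement of Theorem~\ref{th:better-than-Barvinok} is exactly the endpoint of the proof strategy sketched in the introduction, so the plan is to follow that sketch rigorously, using the Fourier/Poisson machinery of Section~\ref{sect:poisson} together with the one-sided continuity of Proposition~\ref{prop:alcoves}.

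First, I would reduce \eqref{eq:better-than-Barvinok-cone} to \eqref{eq:approx-M}. Since $S(s+\c)(\xi)=\e^{\langle\xi,s\rangle}M(s,\c)(\xi)$ and likewise for the patched versions, and since $\e^{\langle\xi,s\rangle}$ is holomorphic with homogeneous degree $\geq 0$ expansion, multiplication by $\e^{\langle\xi,s\rangle}$ preserves the filtration $\CM_{[\geq q]}(V^*)$. Hence it suffices to prove \eqref{eq:approx-M}.

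Fix $m\in \Z$. By Corollary~\ref{cor:poisson-L2}, applied once with the subspace $L$ and once with the subspace $\{0\}$, we obtain the $L^2$-identities (in $s\in V/\lattice$, with values in the finite-dimensional space of rational functions of $\xi$ carrying the homogeneous components)
$$
M(s,\c)_{[m]}(\xi)=\sum_{\gamma\in\lattice^*}\e^{\langle 2i\pi\gamma,s\rangle}I(\c)(\xi+2i\pi\gamma)_{[m]},
$$
$$
M^L(s,\c)_{[m]}(\xi)=\sum_{\gamma\in\lattice^*\cap L^\perp}\e^{\langle 2i\pi\gamma,s\rangle}I(\c)(\xi+2i\pi\gamma)_{[m]}.
$$
Multiplying the second identity by $\rho(L)$, summing over $L\in\CL$, and interchanging the finite sum over $L$ with the sum over $\gamma$, the coefficient of $\e^{\langle 2i\pi\gamma,s\rangle}I(\c)(\xi+2i\pi\gamma)_{[m]}$ becomes $\sum_{L\in\CL,\ \gamma\in L^\perp}\rho(L)$, which by the defining property of the patching function equals the indicator $[\gamma\in\bigcup_{L\in\CL}L^\perp]$. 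Subtracting from the first identity therefore yields, in the $L^2$-sense,
$$
M(s,\c)_{[m]}(\xi)-M^\CL(s,\c)_{[m]}(\xi)=\sum_{\gamma\in\lattice^*\setminus\bigcup_{L\in\CL}L^\perp}\e^{\langle 2i\pi\gamma,s\rangle}I(\c)(\xi+2i\pi\gamma)_{[m]}.
$$
Now I would invoke Proposition~\ref{prop:good-gamma}: for $\gamma\notin\bigcup_{L\in\CL}L^\perp$ and $m\leq -d+k$, the homogeneous component $I(\c)(\xi+2i\pi\gamma)_{[m]}$ vanishes. (The hypothesis that $\CL$ contains $\lin(\f)$ for every face $\f$ of codimension $\leq k$ is exactly what is needed: it forces such $\gamma$ to be sufficiently ``generic'' with respect to the edges of $\c$ so that the hyperplane singularities of $I(\c)$ at $-2i\pi\gamma$ have codimension $>d-k$, killing the low-degree components.) This gives \eqref{eq:approx-M} as an $L^2$-identity.

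The remaining and most delicate step is the passage from $L^2$ to pointwise. By Proposition~\ref{prop:alcoves}(i), $s\mapsto M(s,\c)_{[m]}(\xi)-M^\CL(s,\c)_{[m]}(\xi)$ is a polynomial on each $\Psi_\c^\CL$-alcove (where $\Psi_\c^\CL=\bigcup_{L\in\CL}\Psi_\c^L$) with values in the finite-dimensional space $\frac{1}{\prod_j v_j}\CP_{[m+N]}(V^*)$. A function which is piecewise polynomial on an open dense union of alcoves and vanishes in $L^2$ must vanish identically on each alcove. Finally, one-sided continuity (Proposition~\ref{prop:alcoves}(ii)) along any direction $v\in L-\c$ for each $L\in\CL$ extends this vanishing from the interior of alcoves to every $s\in V$, since any boundary $s_0$ can be approached from within some alcove along a direction in $-\c\subseteq L-\c$. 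This upgrades the $L^2$-identity to the pointwise statement \eqref{eq:approx-M}, and hence gives \eqref{eq:better-than-Barvinok-cone}. The main obstacle is exactly this last bookkeeping: making sure that the $L^2$ equality of the homogeneous components, which takes values in a finite-dimensional space of meromorphic functions of $\xi$, is compatible with the alcove-by-alcove polynomial structure and with the one-sided limits at alcove boundaries, so that no delicate cancellation is missed.
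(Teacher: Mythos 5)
Your proposal is correct and follows essentially the same route as the paper's proof: Fourier expansion of the homogeneous components via Theorem~\ref{th:poisson}, cancellation via the patching-function identity, vanishing of the surviving terms by Proposition~\ref{prop:good-gamma}, and the upgrade from an $L^2$-identity to a pointwise one using alcove-wise polynomiality and the one-sided continuity of Proposition~\ref{prop:alcoves}. (One small slip: the displayed Fourier identities for $M(s,\c)_{[m]}$ and $M^L(s,\c)_{[m]}$ are those of Theorem~\ref{th:poisson}, not of Corollary~\ref{cor:poisson-L2}, which concerns $S^L(s+\c)_{[m]}$ with $I(s+\c)$ in place of $I(\c)$.)
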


 A proof of this theorem,  based on the local Euler-Maclaurin formula,
appeared in \cite{Baldoni-Berline-Vergne-2008}.

If $\c$ is simplicial,
the set of faces of codimension $\leq k$ of  $\c$ is already closed under sum,
and its patching function is very simple (cf.~Section~\ref{subsect:example-patching-simplicial}). For this particular family $\CL$,
we gave a simple proof of the theorem in \cite{so-called-paper-1}.

Below we will give a proof closer to the approach of Barvinok.
It is based on  the Poisson formula of Section~\ref{sect:poisson}
 and on the following Proposition \ref{prop:good-gamma}, which is
 analogous to Theorem 3.2 of \cite{barvinok-2006-ehrhart-quasipolynomial}.
\begin{proposition}\label{prop:good-gamma}
Let $\c$ be a  full-dimensional   cone in $V$.
Fix $k$, $0\leq k\leq d$. Let
$\CL $ be a family of subspaces of $V$
such that  $\lin(\f)\in\CL$ for every face $\f$ of codimension $\leq k$ of $\c$.
Let $\gamma\in V^*$. Assume that  $ \gamma \notin
\bigcup_{L\in\CL} L^\perp$.  Then
$$
I(\c)(\xi+2 i\pi \gamma)\in \CM_{[\geq -d+k+1]}(V^*).
$$
\end{proposition}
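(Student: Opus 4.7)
The strategy is to reduce to the simplicial case via a Brion--Vergne decomposition chosen compatibly with~$\gamma$, and then apply formula~\eqref{eq:I}. I would first translate the hypothesis: since $\CL$ contains $\lin(\f)$ for every face $\f$ of $\c$ of codimension $\leq k$, the condition $\gamma\notin\bigcup_{L\in\CL}L^\perp$ means no such face lies in $\ker\gamma$. Letting $m$ denote the maximal dimension of a face of $\c$ contained in $\ker\gamma$, we then have $m\leq d-k-1$, and it suffices to prove the sharper statement $I(\c)(\xi+2i\pi\gamma)\in\CM_{[\geq -m]}(V^*)$.

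Next, I fix a face $\f$ of $\c$ of dimension~$m$ contained in $\ker\gamma$, and set $L=\lin\f$, a rational subspace of dimension $m$ contained in $\ker\gamma$. Applying Proposition~\ref{prop:brion-vergne-decomposition} with respect to~$L$, I write $[\c]\equiv\sum_j\epsilon_j[\c_j]$ modulo indicator functions of cones containing lines, where each $\c_j$ is a full-dimensional simplicial cone with a face parallel to~$L$. Since $I$ vanishes on cones with lines, $I(\c)=\sum_j\epsilon_j I(\c_j)$.

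The crucial step is to choose this decomposition so that for each $\c_j$ with edges $w_1,\ldots,w_d$ (where $w_1,\ldots,w_m$ span~$L$) the remaining edges satisfy $\langle\gamma,w_l\rangle\neq0$ for $l>m$. The key observation is that the descended form $\bar\gamma\in(V/L)^*$ does not annihilate any edge of the projected cone $\pi(\c)\subseteq V/L$: each such edge is the image of an $(m+1)$-dimensional face of $\c$ containing $\f$, which by maximality of $m$ is not contained in $\ker\gamma$, hence possesses some edge (necessarily outside $L$) not annihilated by $\gamma$. Triangulating $\pi(\c)$ without added edges and lifting via these $\gamma$-generic representatives yields the desired decomposition. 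Formula~\eqref{eq:I} then gives
\[
I(\c_j)(\xi+2i\pi\gamma)=\frac{(-1)^d\vol_\lattice(\b_j)}{\prod_{l=1}^{m}\langle\xi,w_l\rangle\;\prod_{l=m+1}^{d}(\langle\xi,w_l\rangle+2i\pi\langle\gamma,w_l\rangle)},
\]
using $\langle\gamma,w_l\rangle=0$ for $l\leq m$. The first product contributes lowest $\xi$-homogeneous degree $-m$; the second, by $\gamma$-genericity, is nonvanishing at $\xi=0$. Hence each $I(\c_j)(\xi+2i\pi\gamma)\in\CM_{[\geq -m]}(V^*)$, and summing yields the claim.

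The hardest part of this proof is rigorously producing the $\gamma$-generic Brion--Vergne decomposition: the combinatorial observation about $\bar\gamma$ and edges of $\pi(\c)$ is simple, but integrating it into the explicit construction of the decomposition (with a correct assignment of signs $\epsilon_j$ and careful handling of the $\pm$ directions in~$L$ used for lifting) requires care.
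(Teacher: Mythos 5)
Your reduction to the sharper-looking statement $I(\c)(\xi+2i\pi\gamma)\in \CM_{[\geq -m]}(V^*)$, where $m$ is the maximal dimension of a face of $\c$ contained in $\ker\gamma$, is fine (it is exactly the proposition applied with the largest admissible $k$, namely $k=d-m-1$). The gap is at the step you yourself flag as the hardest: the ``$\gamma$-generic'' Brion--Vergne decomposition you need does not exist in general. Your observation that $\bar\gamma$ annihilates no \emph{edge} of the projected cone is correct, but it does not control the edges of the cones $\c_j$: a decomposition of $[\c]$ into simplicial cones with a face spanning $L$ must in general use edges projecting to rays \emph{interior} to the projection of $\c$ in $V/L$, and $\bar\gamma$ may vanish on such a ray. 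Concretely, let $\c\subset\R^3$ be the cone over a square with edge generators $v_1=(1,0,1)$, $v_2=(0,1,1)$, $v_3=(-1,0,1)$, $v_4=(0,-1,1)$, and let $\gamma=(0,1,0)$. The faces of $\c$ contained in $\ker\gamma$ are exactly the two opposite edges $\R_{\geq0}v_1$ and $\R_{\geq0}v_3$ (no $2$-face), so $m=1$, and you would take $L=\R v_1$, say. Triangulating $\c$ along the diagonal $\cone(v_2,v_4)$ gives
$$
I(\c)(\xi+2i\pi\gamma)=\frac{-c_1}{\langle\xi,v_1\rangle\,(\langle\xi,v_2\rangle+2i\pi)(\langle\xi,v_4\rangle-2i\pi)}+\frac{-c_2}{(\langle\xi,v_2\rangle+2i\pi)\,\langle\xi,v_3\rangle\,(\langle\xi,v_4\rangle-2i\pi)}
$$
with $c_1,c_2>0$, which has a genuine, non-cancelling simple pole along the hyperplane $v_3^\perp$. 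On the other hand, any sum $\sum_j\epsilon_j I(\c_j)(\xi+2i\pi\gamma)$ over simplicial cones $\c_j$ having one edge along $\pm v_1$ and all remaining edges $w$ with $\langle\gamma,w\rangle\neq0$ is singular near $\xi=0$ only along $v_1^\perp$. Hence no such family can represent $[\c]$ modulo cones with lines; moreover, since every lift to $V$ of the interior ray $\R_{>0}\pi(v_3)$ lies in $\ker\gamma$, the obstruction cannot be removed by a cleverer choice of lifts. In this example the conclusion $I(\c)(\xi+2i\pi\gamma)\in\CM_{[\geq-1]}(V^*)$ is still true, but only because the degree $-2$ contributions of the individual pieces of an actual Brion--Vergne decomposition cancel --- a cancellation that a term-by-term bound like yours cannot detect.

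The paper circumvents this by a different mechanism: an induction on the dimension based on the identity \eqref{Stokes}, $I(\c)(\xi)=\frac{1}{\langle\xi,v\rangle}\sum_{\q}\langle\nu_\q,v\rangle I(\q)(\xi)$ over the facets $\q$ of $\c$. Choosing $v$ with $\langle\gamma,v\rangle\neq0$ makes the prefactor holomorphic at $\xi+2i\pi\gamma$, and the hypothesis for $(\c,k)$ passes to $(\q,k-1)$, so the degree bound improves by one at each step. To repair your argument you would need either to adopt such an inductive scheme or to find a way to exploit the cancellation between the pieces of the decomposition, rather than bounding each piece separately.
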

\begin{proof}
We prove this statement   by induction on the dimension $d$.
Let
$\CS_k(\c)$  be  the family of subspaces of $V$ consisting of the spaces
$\lin(\f)$, where $\f$  runs over the faces of codimension $k$ of $\c$. As   $\bigcup_{L\in\CS_k(\c)} L^\perp$ is contained in $\bigcup_{L\in\CL} L^\perp$, it is sufficient to prove the proposition   for  $\gamma \notin
\bigcup_{L\in\CS_k(\c)} L^\perp$.

We use the following formula (cf.~\cite{barvinok-1993:exponential-sums}, for instance),
\begin{equation}\label{Stokes}
I(\c)(\xi)=\frac{1}{\langle \xi,v\rangle}\sum_{\q}\langle
\nu_\q,v\rangle I(\q)(\xi).
\end{equation}
Here, the sum runs over the set of facets $\q$ of $\c$. For each
facet $\q$,  we take $\nu_\q\in V^*$ to be the primitive outer normal vector,
i.e., 
the  linear form orthogonal to $\lin(\q)$ which is outgoing with respect to $\c$ and
primitive with respect to the  dual lattice. The vector $v$ is
arbitrary. 

Therefore
\begin{equation}\label{eq:Stokes-applied}
  I(\c)(\xi + 2i \pi \gamma )=\frac{1}{\langle  \xi +2i \pi \gamma
    ,v\rangle}\sum_{\q}\langle \nu_\q,v\rangle I(\q)(\xi+ 2i \pi \gamma).
\end{equation}

Let us choose $v$ so that
$\langle\gamma,v\rangle\neq 0$.
Thus the factor $\frac{1}{\langle \xi+ 2i
\pi \gamma ,v\rangle}$ is analytic near $\xi=0$. 

If $k=0$,   Formula~\eqref{eq:Stokes-applied} together with \eqref{eq:I}
shows that $I(\c)(\xi + 2i \pi \gamma)$ 
has no homogeneous term of $\xi$-degree $\leq -d$, so $I(\c)(\xi + 2i \pi \gamma )$
belongs to  $\CM_{[\geq -d+1]}(V^*)$ as claimed.

If $k\geq 1$, $\CS_k(\c)$ is the union over the facets $\q$ of
 $\c$  of the families $\CS_{k-1}(\q)$.
Hence, by the induction hypothesis, the
meromorphic function $I(\q)(\xi+2 i\pi \gamma)$ has no homogeneous
term of $\xi$-degree  $\leq -(d-1)+k-1=-d+k$, 
so $I(\c)(\xi + 2i \pi \gamma )$
belongs to  $\CM_{[\geq -d+k+1]}(V^*)$.
\end{proof}
\begin{example}
  Let $\c$ be the standard cone in $\R^3$, with generators $e_1,e_2,e_3$.
   Thus $I(\c)(\xi)=\frac{-1}{\xi_1 \xi_2 \xi_3}$.  Let $\gamma=(\gamma_1,\gamma_2,\gamma_3)$.

   First, let $k=2$.
   Then $\CL$ contains the subspaces $\R e_j$ for $j=1,2,3$.
    If $\gamma\notin \bigcup_{L\in\CL} L^\perp$, then $\gamma_j\neq 0$ for $j=1,2,3$. Hence
    $$
    I(\c)(\xi +2 i \pi \gamma)=\frac{-1}{(\xi_1+ 2 i \pi \gamma_1)(\xi_2+ 2 i \pi \gamma_2)(\xi_3+ 2 i \pi \gamma_3)}
    $$
   is analytic near $\xi=0$, so its expansion starts at homogeneous $\xi$-degree $0$, and thus it
   belongs to  $\CM_{[\geq 0]}(V^*)$.

   Next, let $k=1$.
  Then $\CL$ contains the subspaces $\R e_1+\R e_2$, $\R e_2+\R e_3$, and $\R e_1+\R e_3$.
  If $\gamma\notin \bigcup_{L\in\CL} L^\perp$, at least two of its coordinates
  must be nonzero, say $\gamma_1\neq 0$ and  $\gamma_2\neq 0$.
 Then   the factor $\frac{1}{(\xi_1+ 2 i \pi \gamma_1)(\xi_2+ 2 i \pi \gamma_2)}$ is analytic near $\xi=0$.
  So, at worst, if $\gamma_3=0$,
 the expansion of
  $I(\c)(\xi +2 i \pi \gamma)=\frac{1}{(\xi_1+ 2 i \pi \gamma_1)(\xi_2+ 2 i \pi \gamma_2)} \frac{-1}{\xi_3}$
  starts at homogeneous $\xi$-degree $-1$, and so it belongs to $\CM_{[\geq -1]}(V^*)$.
\end{example}

Now we give the new proof of Theorem \ref{th:better-than-Barvinok}.

\begin{proof}[Proof of Theorem \ref{th:better-than-Barvinok}]
Fix $m\leq  -d+k$.
 Let us denote $$F_{[m]}(s)=\retroS(s,\c )_{[m]}(\xi)-\sum_{L\in \CL}\rho(L)\retroS^L (s,\c )_{[m]}(\xi).$$
We compute the term of homogeneous $\xi$-degree $m$ of
$\retroS^L (s,\c )$ 
by looking at its Fourier series (Theorem \ref{th:poisson}).
For $L = \{0\}$ (first term in $F_{[m]}(s)$), we obtain
\begin{equation}
  \retroS(s,\c)_{[m]}(\xi)= \sum_{\gamma \in \lattice^*}
  \e^{\langle 2 i \pi \gamma,s\rangle}
  I(\c)(\xi+2i \pi \gamma)_{[m]},
\end{equation}
whereas for each of the terms corresponding to $L \in \CL$, we obtain
\begin{equation}
  \retroS^L(s,\c)_{[m]}(\xi)= \sum_{\gamma \in \lattice^*\cap L^\perp}
  \e^{\langle 2 i \pi \gamma,s\rangle}
  I(\c)(\xi+2i \pi \gamma)_{[m]}.
\end{equation}
As $[\bigcup_{L\in \CL}L^{\perp}]=\sum_{L \in \CL} \rho(L)[L^{\perp}]$, 
we see that  (in the $L^2$-sense)
 $$F_{[m]}(s)=\sum_{ \substack {\gamma \in \Lambda^*\\ \gamma \notin
\bigcup_{L\in\CL} L^\perp}} \e^{\langle 2 i \pi \gamma,s\rangle}
I(\c)(\xi+2i \pi \gamma)_{[m]}.$$

Thus, by Proposition \ref{prop:good-gamma}, we see that $F_{[m]}(s)$ vanishes for almost all~$s$.
For a given alcove $\a$, the restriction to $\a$ of
$F_{[m]}(s)$ is a polynomial function of
$s$, therefore it vanishes for all $s\in \a$.
For a given $s_0\in V$, and any alcove $\a$ intersecting  $s_0+\c$ and with $s_0$ in its boundary,
$F_{[m]}(s_0)$ is the limit of $F_{[m]}(s)$,
when $s\to s_0$, $ s\in \a$.
Hence  $F_{[m]}(s)$ vanishes for every $s\in V$.
Equation \eqref{eq:better-than-Barvinok-cone} follows from \eqref{eq:approx-M}
by multiplying by the analytic function~$\e^{\langle \xi,s\rangle}$.
\end{proof}

\clearpage

\section*{Acknowledgments}
 This article is part of a research project which was made possible by
 several meetings of the authors,
at the Centro di Ricerca Matematica Ennio De Giorgi of the Scuola
Normale Superiore, Pisa in 2009,  in a SQuaRE program at the
American Institute of Mathematics, Palo Alto, in July 2009,
September 2010, and February 2012,
 in the Research in Pairs program at
Mathematisches Forschungsinstitut Oberwolfach in March/April 2010,
and at the Institute for Mathematical Sciences (IMS) of the National
University of Singapore in November/December 2013.
The support of all four institutions is gratefully acknowledged.
V.~Baldoni was partially supported by a PRIN2009 grant.
J. De Loera was partially supported by grant DMS-0914107 of the National Science Foundation.
M.~K\"oppe was partially supported by grant DMS-0914873 of the National
Science Foundation.

\bibliographystyle{../amsabbrvurl}
\bibliography{../biblio}
\end{document}

\section*{List of notations}  

\begin{tabbing}
   xxxxxxxxxxxxxxxxxx \= xxxxxx \= \kill

  $\b$   \> the semi-open cell $\b=\sum_{k=1}^d [0 1[ v_k$. \\ 

  $\c$   \> a [convex, polyhedral, rational] cone in $V$. \\ 

  $\c_s$ \> cone of feasible directions at vertex $s$. \\ 

 $[\c]$ \> indicator function of $\c$. \\ 

 $\gamma$ \>    element of the dual lattice   $\lattice^*$ \\

 $I$     \> a subset of $\{1,\dots, d\}$. \\ 

 $I^c$ \> complement of $I$ in $\{1,\dots, d\}$. \\


 $I_\lattice(\p)$, $I_\lattice(\p)(\xi)$,    \> \\ 

 $I(\p )$, $I(\p )(\xi)$ \> $\int_\p e^{\la \xi,x\ra}dx$. \\ 

$L$     \> a rational subspace of $V$, polytope is sliced parallel to $L$. \\

 $L^\perp$  \> the orthogonal of $L$ in $V^*$ \\

 $L_I$     \> the subspace of $V$ spanned by $\{v_j\}_{j\in I}$. \\

  $\lattice$,  $\lattice_V$ ,  \>   lattice of $V$. \\ 

$\lattice_{V/L}$\> the projected  lattice in $V/L$. \\
$\lattice_{I^c}$ \> projected lattice in $L_{I^c}$. \\ 

$\lattice^*$\> the dual lattice in $V^*$, so that $\langle \lattice, \lattice ^*\rangle \subset\Z$\\

  $\retroS(s,\c)(\xi)$ \> the function $\e^{-\langle \xi,s\rangle}S(s+\c, \lattice)(\xi)$ . \\

  $\retroS^L_\lattice(s,\c)(\xi)$ \> the function  $e^{-\la\xi,s\ra}S^L_\lattice(s+\c)(\xi)$  \\

   $\CM_{\ell}(V^*)$ \> the space of meromorphic functions $\phi(\xi)$ on $V^*$ such that \\ 
   \>    $\prod_k \langle\xi,w_k\rangle \phi(\xi)$ is holomorphic near $\xi=0$, \\

   $\CM_{[m]}(V^*)$ \> subspace of  $\CM(V^*)$ \\



$\p$    \> a polyhedron, or a polytope, or a simplex in $V$. \\ 

$\polypp(V)$ \> algebra of step-polynomials on $V$\\ 

$\polypp_{[\leq k]}(V)$ \> subspace of $\polypp(V)$ .\\ 

$\polypp^\Psi(V)$ \> subalgebra of  $\polypp(V)$  \\

$\phi_{[m]}$ \> homogeneous component of degree $m$. \\ 

$\Psi$ \> finite subset of $\lattice^* $ \\ 

$\Psi_\c$ \> finite subset of $\lattice^* $\\ 

$\CR_{[m]}(V^*)$ \>   space of rational functions of degree m.\\ 

 $\{t\}$ \>             for a real number $t$, fractional part of $t$. \\

 $s$ \>  a vertex of a cone or  polytope in $V$. \\

$S_\lattice(\p)$, $S(\p)$    \> the \emph{discrete} generating function of of a polyhedron $\p$ \\

 $S(\p)(\xi)$ \> idem \\
  $S(\p ,h)$ \> the weighted sum with weight $h(x)$.\\

 $S^L_\lattice(\p), S^L(\p)$ \> the \emph{intermediate} generating function of a polyhedron $\p$, \\

 $S^L(\p,h )$ \> the intermediate weighted sum with weight $h(x)$.\\

  $V$         \>    vector space over $\R$, provided with a lattice. \\

 $\CV(\p)$ \> set of vertices of $\p$. \\

 $x,v$ \> elements of $V$. \\

 $x=x_I+x_{I^c}$ \> decomposition w.r.to $V=L_I\oplus L_{I^c}$. \\

  $\xi$ \> element of  $V^*$. \\

\end{tabbing}